\newtheorem{thm}{Theorem}[section]
\newtheorem{lem}[thm]{Lemma}
\newtheorem{cor}[thm]{Corollary}
\newtheorem{prop}[thm]{Proposition}
\newenvironment{customthm}[1]
{\innercustomthm}{\endinnercustomthm}
\theoremstyle{definition}
\newtheorem{define}[thm]{Definition}
\newtheorem{defthm}[thm]{Definition-Theorem}
\newtheorem{ex}[thm]{Example}
\newtheorem{cex}[thm]{Counterexample}
\newtheorem{rem}[thm]{Remark}
\newcommand{\inv}{^{-1}}
\newcommand{\str}{\mathsf{s}\tau\textnormal{-}\mathsf{rigid}}
\newcommand{\stt}{\mathsf{s}\tau\textnormal{-}\mathsf{tilt}}
\newcommand{\smc}{2\textnormal{-}\mathsf{smc}}
\newcommand{\lperp}[1]{\prescript{\perp}{}{#1}}
\title{$\tau$-Cluster Morphism Categories and Picture Groups}
\author{Eric J. Hanson, Kiyoshi Igusa}
\address{Department of Mathematics, Brandeis University\\415 South Street, Waltham MA 02453, USA}
\email{ehanson4@brandeis.edu (E.J. Hanson, Corresponding Author)}
\date{7 June, 2021}
\subjclass[2010]{
16G20}
\keywords{$\tau$-tilting, Nakayama algebras, wide subcategories, simple minded collections, $\mathrm{CAT}(0)$ Cube Complexes}
\thanks{\copyright Taylor \& Francis Group, LLC. Reproduced under CC-BY-NC 2.0.}
\begin{document}

\noindent Comm. Alg. 49 (2021), no. 10, 4377-4415. \href{https://doi.org/10.1080/00927872.2021.1921184}{DOI:10.1080/00927872.2021.1921184}.

\bigskip

\maketitle

\begin{abstract}
	$\tau$-cluster morphism categories, introduced by Buan and Marsh, are a generalization of cluster morphism categories (defined by Igusa and Todorov). We show the classifying space of such a category is a cube complex, generalizing results of Igusa and Todorov and Igusa. Furthermore, the fundamental group of this space is the picture group of the algebra, first defined by Igusa, Todorov, and Weyman. Finally, we show that for Nakayama algebras, this space is a $K(\pi,1)$. The key step is a combinatorial proof that, for Nakayama algebras, 2-simple minded collections are characterized by pairwise compatibility conditions, a fact not true in general.
\end{abstract}

\tableofcontents

\section{Introduction}

In \cite{igusa_picture}, the second author, Todorov, and Weyman associate to every hereditary algebra of finite type a finitely presented group, called the \emph{picture group}. This group is intimately related with the so-called ``semi-invariant picture'' of the underlying algebra. These (semi-invariant) pictures have in turn appeared in the literature under many names, including scattering diagrams, stability scattering diagrams, cluster mutation fans, and wall-and-chamber structures. Recently, they have featured prominently in work on cluster algebras (e.g. \cite{gross_canonical,cheung_greedy,reading_combinatorial}), representation theory (e.g. \cite{brustle_wall,treffinger_sign}), and invariant theory (e.g. \cite{bridgeland_scattering}).

In this paper, we consider ``semi-invariant pictures'' or ``scattering diagrams'' which are finite. This means that Euclidean space $\mathbb{R}^n$ is divided into finitely many regions separated by ``walls''. The walls are labeled with certain indecomposable modules (called ``bricks''), which may not be distinct. The picture group is generated by these brick labels modulo sufficiently many relations so that for any oriented cycle in $\mathbb{R}^n$ which is transverse to the walls, the product of the labels of walls traversed by this cycle is trivial. For a representation finite hereditary algebra, bricks correspond precisely to positive roots of the underlying Dynkin diagram, and the picture group then has the following presentation (see \cite[Def.~1.1.5, Def.~1.1.8, Thm.~2.2.1]{igusa_picture} or Definition-Theorem~\ref{defthm:hereditarypicture} in the present paper):
\begin{itemize}
	\item There is one generator for each positive root of the underlying Dynkin diagram.
	\item If $\alpha$ and $\beta$ are positive roots and there are no nonzero morphisms between the corresponding modules, then there is a relation
	$$\alpha\cdot\beta = \prod(r\alpha+s\beta),$$
		where the product is over all positive roots of the form $(r\alpha + s\beta)$ in increasing order of the ratio $r/s$ (from $0/1$ to $1/0$).
\end{itemize}

In types $A,D$, and $E$, the relations described above are all of the form $\alpha\cdot \beta = \beta\cdot\alpha$ or $\alpha\cdot\beta = \beta\cdot(\alpha+\beta)\cdot\alpha$. These are called ``square'' and ``pentagon'' relations, respectively. By an observation of Keller, this is related to ``quantum dilogarithms'', which are power series $E(\beta)$ associated to each positive root $\beta$ of a Dynkin quiver. These quantum dilogarithms satisfy square and pentagon relations as well, and it has been conjectured that all other relations among quantum dilogarithms follow from these. This is, by definition, equivalent to saying that quantum dilogarithms give a faithful representation of the picture group.

Another application of picture groups is in the study of ``maximal green sequences''. It is proved in \cite{igusa_pictureMGS} that there is a bijection between the set of maximal green sequences for a Dynkin quiver $Q$ and positive expressions for the ``Coxeter element'' in the picture group of $Q$. For more about picture groups see \cite{igusa_picture,igusa_signed}.

One topic of interest is the computation of the cohomology ring of a picture group. This is difficult to compute directly, so instead, the second author, Todorov, and Weyman associate a finite CW complex, called the \emph{picture space}, to every hereditary algebra in \cite{igusa_picture}. In \cite{igusa_signed}, the second author and Todorov define the \emph{cluster morphism category} of a hereditary algebra and show that in finite type, the classifying space of this category is homeomorphic to the picture space. They then show that the picture space has cohomology isomorphic to that of the picture group (this is also shown in a special case in \cite{igusa_category}). This fact is used in \cite{igusa_picture} to compute the cohomology of any picture group of type $A_n$ by computing the (cellular) cohomology of the corresponding picture space.

In this paper, we extend the construction of the picture group and the associated CW complex to the case of \emph{$\tau$-tilting finite} algebras using $\tau$-\emph{tilting theory} (see \cite{adachi_tilting,demonet_tilting}) and $\tau$-\emph{cluster morphism categories} (see \cite{buan_category,buan_exceptional}). $\tau$-tilting theory was introduced by Adachi, Iyama, and Reiten in \cite{adachi_tilting} as an extension of classical tilting theory which recovers the combinatorics from the hereditary case in general. It has been an area of very active current research since its introduction since $\tau$-tilting theory applies to all finite dimensional algebras. For example,  Br\"ustle, Smith, and Treffinger use $\tau$-tilting theory to describe the wall and chamber structure of finite dimensional algebras in \cite{brustle_wall}, Plamondon shows that for gentle algebras, $\tau$-tilting finiteness and representation finiteness are equivalent in \cite{plamondon_tilting}, and Schroll and Treffinger use $\tau$-tilting theory to expand on the first Brauer-Thrall conjecture in \cite{schroll_tau}. $\tau$-cluster morphism categories were introduced by Buan and Marsh in \cite{buan_category} as a generalization of the cluster morphism categories from \cite{igusa_signed}.

Our final main result is to recover the isomorphism of cohomologies (of the picture group with the picture space) in the case that our algebra is Nakayama. Nakayama algebras form a well-studied class of representation-finite, but not necessarily hereditary, algebras. As such, they remain an important class to study. For example, in \cite{madsen_bounds}, Madsen and Marczinzik give bounds on the global and finitistic dimensions of Nakayama algebras and in \cite{sen_phi}, \c{S}en shows that the $\varphi$-dimension (as defined in \cite{igusa_finitistic}) of a Nakayama algebra of infinite global dimension is always even.

Our proof of the isomorphism of cohomologies in the Nakayama case relies on showing that the associated CW complex has the structure of a \emph{locally $\mathrm{CAT}(0)$ cube complex}\footnote{The term non-positively curved, or NPC, is also used in place of locally $\mathrm{CAT}(0)$ in the literature.}. As a followup to this paper, the authors show in \cite{hanson_pairwise} that this is not true for arbitrary $\tau$-tilting finite algebras. It remains an open question whether there is an isomorphism of cohomologies between the picture group and the picture space when the latter is not a locally $\mathrm{CAT}(0)$ cube complex.

As an application of this work, forthcoming work to appear by the second author, Orr, Todorov, and Weyman use the existence of the isomorphism established in this paper to compute the cohomology rings of the picture groups of cyclic cluster-tilted algebras of type $D_n$, since these are examples of Nakayama algebras.
A surprising consequence of this calculation is that hereditary algebras of type $D_n$ do not have the same picture group cohomology as their cluster-tilted counterparts.

\subsection{Notation and Terminology}

Let $\Lambda$ be a finite dimensional, basic algebra over an arbitrary field $K$. We denote by $\mathsf{mod}\Lambda$ the category of finitely generated (right) $\Lambda$-modules and by $\mathsf{proj}\Lambda$ the subcategory of projective modules. Throughout this paper, all subcategories will be full and closed under isomorphisms. For $M \in \mathsf{mod}\Lambda$, we denote by $\mathsf{add} M$ (resp. $\mathsf{Fac} M, \mathsf{Sub} M$) the subcategory of direct summands (resp. factors, subobjects) of finite direct sums of $M$. Moreover, $\mathsf{Filt} M$ refers to the subcategory of modules admitting a filtration by the direct summands of $M$. Given a subcategory $\mathcal{M} \subseteq \mathsf{mod}\Lambda$, we define $\mathsf{add}\mathcal{M}, \mathsf{Fac}\mathcal{M},\mathsf{Sub}\mathcal{M}$, and $\mathsf{Filt}\mathcal{M}$ analogously.

We denote by $\mathcal{D}^b(\mathsf{mod}\Lambda)$ the bounded derived category of $\mathsf{mod}\Lambda$. The symbol $(-)[1]$ will denote the shift functor in all triangulated categories. We identify $\mathsf{mod}\Lambda$ with the subcategory of $\mathcal{D}^b(\mathsf{mod}\Lambda)$ consisting of stalk complexes centered at zero. For $U \in \mathsf{mod}\Lambda$ or $U \in \mathcal{D}^b(\mathsf{mod}\Lambda)$, we denote by $\mathsf{rk}(U)$ the number of (isoclasses of) indecomposable direct summands of $U$.

For $\mathcal{M}$ an arbitrary module category, $\tau_{\mathcal{M}}$ (or simply $\tau$ if there is no confusion) will be the Auslander-Reiten translate in $\mathcal{M}$. For an object $X$ in a category $\mathcal{C}$, we define the \emph{left-perpendicular category} of $X$ as $\lperp{X}:= \{Y\in\mathcal{C}|\mathrm{Hom}(Y,X)\} = 0$. We define the \emph{right-perpendicular category}, $X^\perp$, dually. If $X \in Y^{\perp}\cap\lperp{Y}$ we say that the objects $X$ and $Y$ are {\it Hom orthogonal}. If in addition $Y[1]\in X^\perp$ and $X[1] \in Y^\perp$, we say $X$ and $Y$ are {\it Hom-Ext orthogonal}. We denote by $\mathsf{ind}(\mathcal{C})$ the category of indecomposable objects of $\mathcal{C}$.

\subsection{Organization and Main Results}

The contents of this paper are as follows. In Section \ref{sec:background}, we overview the results we will use from $\tau$-tilting theory as well as the construction of the $\tau$-cluster morphism category given in \cite{buan_category}.

In Section \ref{sec:cubical}, we recall the definition of a \emph{cubical category} from \cite{igusa_category}. We then prove our first main theorem:
\begin{customthm}{A}[Theorem \ref{thm:cubical}]
Let $\Lambda$ be $\tau$-tilting finite. Then the $\tau$-cluster morphism category of $\Lambda$ is cubical.
\end{customthm}
\noindent This generalizes a known result from \cite{igusa_signed} in the case that $\Lambda$ is hereditary.

In Section \ref{sec:nakayama}, we construct a combinatorial model for the 2-simple minded collections for Nakayama algebras. We use this model to prove that these 2-simple minded collections are given by pairwise compatibility conditions.

Section \ref{sec:pi1} is devoted to studying the fundamental groups of the classifying spaces of $\tau$-cluster morphism categories. We begin by expanding the definition of the \emph{picture group} of a representation finite hereditary algebra given in \cite{igusa_picture} to $\tau$-tilting finite algebras. We then prove our second main theorem.
\begin{customthm}{B}[Theorem \ref{thm:pi1}]
Let $\Lambda$ be $\tau$-tilting finite. Then the fundamental group of the classifying space of the $\tau$-cluster morphism category of $\Lambda$ is isomorphic to the picture group of $\Lambda$.
\end{customthm}
This again generalizes a known result from \cite{igusa_signed} in the case that $\Lambda$ is hereditary.
We end this section by using picture groups to construct faithful group functors for Nakayama algebras. This, together with the results of Section \ref{sec:nakayama}, allows us to conclude our third main theorem.
\begin{customthm}{C}[Theorem \ref{thm:cat0}]
Let $\Lambda$ be a Nakayama algebra. Then the classifying space of the $\tau$-cluster morphism category is a locally $\mathrm{CAT}(0)$ cube complex and hence has cohomology isomorphic to that of the picture group of $\Lambda$. In particular, the cohomological dimension of the picture group is bounded by $\mathsf{rk}(\Lambda)$, the rank of~$\Lambda$.
\end{customthm}
This generalizes a known result from \cite{igusa_category} in the case that $\Lambda$ is a path algebra of type $A_n$. It is also shown in \cite{igusa_signed} that if $\Lambda$ is hereditary, then the classifying space of the ($\tau$-)cluster morphism category has cohomology isomorphic to that of the picture group of $\Lambda$. It remains an open question whether the classifying space of the $\tau$-cluster morphism category has cohomology isomorphic to that of the picture group of $\Lambda$ for arbitrary $\tau$-tilting finite algebras.

\section{Background}
\label{sec:background}

\subsection{$\tau$-Tilting Theory}
\label{sec:tilting}

$\tau$-tilting theory was introduced by Adachi, Iyama, and Reiten \cite{adachi_tilting} as an extension of classical tilting theory which recovers the combinatorics from the hereditary case in general. Following the notation of \cite{buan_category}, we recall that a basic object $M\sqcup P[1] \in \mathcal{D}^b(\mathsf{mod}\Lambda)$ is called a \emph{support $\tau$-rigid pair} for $\Lambda$ if $M\in\mathsf{mod}\Lambda$, $P\in\mathsf{proj}\Lambda$, and we have
	$$\mathrm{Hom}(M,\tau M) = 0 = \mathrm{Hom}(P,M).$$
If in addition
	$$\mathsf{rk}(M) + \mathsf{rk}(P) = \mathsf{rk}(\Lambda)$$
then $M\sqcup P[1]$ is called a \emph{support $\tau$-tilting pair}. We denote by $\str\Lambda$ and $\stt\Lambda$ the sets of (isoclasses of) support $\tau$-rigid and support $\tau$-tilting pairs for $\Lambda$. We now recall several facts about completing $\tau$-rigid pairs.

\begin{thm}\label{thm:bongartz} Let $M\sqcup P[1]$ be a support $\tau$-rigid pair for $\Lambda$.
\begin{enumerate} [label=\upshape(\alph*)]
	\item \cite[Thm. 2.10]{adachi_tilting} Let $\mathsf{proj}(\lperp{\tau M}\cap P^\perp)$ denote the subcategory of modules which are Ext-projective in $\lperp{\tau M}\cap P^\perp$. Then there exists a unique $B \in \mathsf{mod}\Lambda$ such that $M\sqcup B\sqcup P[1]$ is a support $\tau$-tilting pair which satisfies $\mathsf{add}(B\sqcup M) = \mathsf{proj}(\lperp{\tau M}\cap P^\perp)$ and $\lperp{\tau M}\cap P^\perp = \lperp{\tau(M\sqcup B)}\cap P^\perp = \mathsf{Fac} (M\sqcup B)$. The module $B$ is called the \emph{Bongartz complement} of $M$ in $P^\perp$.
	\item \cite[Thm. 3.8]{adachi_tilting} If $M\sqcup P[1]$ is almost complete, that is $\mathsf{rk}(M) + \mathsf{rk}(P) = \mathsf{rk}(\Lambda) - 1$, then there exist exactly two support $\tau$-tilting pairs containing $M\sqcup P[1]$ as a direct summand. Moreover, these two pairs have the form $M\sqcup B \sqcup P[1]$ and $M\sqcup C\sqcup P[1]$ where $C \in \mathsf{Fac} M\cup\mathsf{proj}\Lambda[1]$ and $B \in \mathsf{mod}\Lambda$ is as described in (a).
\end{enumerate}
\end{thm}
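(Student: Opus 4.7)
The plan is to construct $B$ directly from the candidate torsion class $\T := \lperp{\tau M} \cap P^\perp$, obtaining part (a), and then derive part (b) via a single exchange move at $B$.

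For part (a), I would first verify that $\T$ is a functorially finite torsion class in $\mods\Lambda$. Closure under factor modules follows because the Auslander-Reiten formula translates $\Hom(-,\tau M)=0$ into a condition on $\Ext^1(M,-)$ which is inherited by factors; closure under extensions is routine; and functorial finiteness reduces to that of $\Fac M$. I would then construct $B$ as the Ext-projective complement of $M$ in $\T$: for each indecomposable projective $P_i \notin \add P$, take a minimal right $\add M$-approximation of $P_i$, pass to the appropriate quotient to land in $\T$, and collect the resulting indecomposables into $B$. By construction, $\add(M\sqcup B)$ consists of Ext-projectives of $\T$. A rank count, using the fact that the number of indecomposable Ext-projective generators of a functorially finite torsion class in $\mods\Lambda$ equals $rk(\Lambda)$ minus the number of simples lying outside $\T$ (recorded here by the summands of $P$), gives $rk(M)+rk(B)+rk(P)=rk(\Lambda)$. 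This forces $\add(M\sqcup B)=\proj \T$ and $\T = \Fac(M\sqcup B)$. The remaining equality $\lperp{\tau(M\sqcup B)}\cap P^\perp = \T$ then drops out from the Ext-projectivity of $M\sqcup B$ in $\T$. Uniqueness of $B$ is immediate: any other $B'$ with the stated properties must satisfy $\add B' = \proj\T \setminus \add M$, a set determined solely by $\T$.

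For part (b), assume $M\sqcup P[1]$ is almost complete and let $B$ be the indecomposable complement provided by (a). To find the second completion, I would mutate at $B$ inside $M\sqcup B\sqcup P[1]$: take a minimal left $\add M$-approximation $f:B\to M'$. If $f$ is injective, set $C := \coker f$, which lies in $\Fac M$; if the only morphism from $B$ into $\add M$ is zero (equivalently, if $B$ happens to be projective in $\mods\Lambda$), then set $C := B[1] \in \proj\Lambda[1]$. Checking $\tau$-rigidity of $M\sqcup C\sqcup P[1]$ and applying the rank count from (a) shows it is a support $\tau$-tilting pair. For uniqueness of this alternative, note that any completion different from $M\sqcup B\sqcup P[1]$ cannot add an Ext-projective of $\T$ (else it would already be a summand of $B$), so by the dichotomy above it must coincide with $M\sqcup C\sqcup P[1]$.

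The main obstacle will be verifying $\tau$-rigidity in the exchange step of part (b): because $\Lambda$ need not be hereditary, one cannot rely on short-exact-sequence arguments with $\Ext^1$ directly, and instead every claim about $\Hom(-,\tau -)$ must be pushed through the Auslander-Reiten formula and tracked carefully through the approximation triangle that defines $C$. A closely related subtle point in part (a) is establishing $\T = \Fac(M\sqcup B)$ rather than just $\T \supseteq \Fac(M\sqcup B)$, which is where the minimality and existence of the right $\add M$-approximations above are essential.
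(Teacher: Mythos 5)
First, a point of reference: the paper does not prove this statement at all --- it is quoted verbatim from Adachi--Iyama--Reiten \cite[Thm.~2.10, Thm.~3.8]{adachi_tilting} as background, so there is no in-paper argument to compare against. Judged on its own, your sketch follows the right overall strategy (realize $\T=\lperp{\tau M}\cap P^\perp$ as a functorially finite torsion class and take its Ext-projectives), but two steps are genuinely broken. In part (a), the assertion that functorial finiteness of $\T$ ``reduces to that of $\Fac M$'' begs the question: $\T$ is in general strictly larger than $\Fac M$, and the whole content of the theorem is that $\T=\Fac(M\sqcup B)$ for a suitable $B$; one cannot infer functorial finiteness of $\T$ from that of the smaller class $\Fac M$. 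Relatedly, building $B$ from ``minimal right $\add M$-approximations of $P_i$'' points the wrong way --- a right $\add M$-approximation of $P_i$ is a map $M'\to P_i$ and produces nothing in $\T$. What is needed is either Bongartz's universal-extension construction $0\to P_i\to E_i\to M^{r_i}\to 0$, or, once functorial finiteness is established, the minimal \emph{left} $\T$-approximation $P_i\to T_i$ in the sense of Auslander--Smal\o{}. Also, closure of $\lperp{\tau M}$ under factor modules is immediate from left-exactness of $\Hom(-,\tau M)$ applied to a surjection; the proposed detour through $\Ext^1(M,-)$ does not work, since vanishing of $\Ext^1(M,-)$ is not inherited by factor modules over a non-hereditary algebra.

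In part (b) the dichotomy is not exhaustive and the degenerate branch is wrong. The minimal left $\add M$-approximation $f:B\to M'$ can be nonzero and non-injective; the correct recipe is $C=\coker f$ whenever $\coker f\neq 0$, and when $\coker f=0$ the new summand is a shifted projective $Q[1]$ with $\Hom(Q,M)=0$ determined by the support of $\Fac M$ --- it is not $B[1]$, and $B$ need not be projective for this case to occur. Concretely, take $\Lambda=K(1\to 2)$ and $M=S_1$: the Bongartz complement is $B=P_1$, the approximation $P_1\twoheadrightarrow S_1$ is nonzero and surjective, so neither of your branches applies, and the second completion is $S_1\sqcup P_2[1]$, not $S_1\sqcup P_1[1]$. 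The uniqueness argument at the end of (b) also leans on this faulty dichotomy, so it too needs the corrected case analysis (which is the content of \cite[Thm.~2.30]{adachi_tilting}).
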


We say that $M\sqcup C\sqcup P[1]$ is the \emph{left mutation} of $M\sqcup B\sqcup P[1]$ at $B$. Likewise, $M\sqcup B \sqcup P[1]$ is the \emph{right mutation} of $M\sqcup C\sqcup P[1]$ at $C$. This notion of mutation gives the set $\stt\Lambda$ the structure of a poset by taking the transitive closure of the relation $U < V$ if $U$ is a left mutation of $V$.

We now recall that a subcategory $\mathcal{T}\subseteq \mathsf{mod}\Lambda$ is called a \emph{torsion class} if it is closed under extensions and factors. We denote by $\mathsf{tors}\Lambda$ the poset of torsion classes of $\Lambda$ under inclusion. We then have the following.

\begin{thm}\
\smallskip
\begin{enumerate}[label=\upshape(\alph*)]
	\item \cite[Prop. 1.3]{iyama_lattice} The poset $\mathsf{tors}\Lambda$ is a lattice. That is: 
	\begin{itemize}
		\item For every pair of torsion classes $\mathcal{T}, \mathcal{T}' \in \mathsf{tors}\Lambda$, there exists a unique torsion class $\mathcal{T}\vee\mathcal{T}'$ such that $\mathcal{T}\vee\mathcal{T}' \subseteq \mathcal{T}''$ whenever $\mathcal{T},\mathcal{T}'\subseteq\mathcal{T}''$. The torsion class $\mathcal{T}\vee\mathcal{T}'$ is called the \emph{join} of $\mathcal{T}$ and $\mathcal{T}'$.
		\item For every pair or torsion classes $\mathcal{T}, \mathcal{T}' \in \mathsf{tors}\Lambda$, there exists a unique torsion class $\mathcal{T}\wedge\mathcal{T}'$ such that $\mathcal{T}''\subseteq \mathcal{T}\wedge\mathcal{T}'$ whenever $\mathcal{T}''\subseteq\mathcal{T},\mathcal{T}'$. The torsion class $\mathcal{T}\wedge\mathcal{T}'$ is called the \emph{meet} of $\mathcal{T}$ and $\mathcal{T}'$.
	\end{itemize}
	\item \cite[Thm. 2.7, Cor. 2.34]{adachi_tilting} There is a monomorphism of posets $\stt\Lambda\rightarrow\mathsf{tors}\Lambda$ given by $M\sqcup P[1]\mapsto \mathsf{Fac} M$.
	\item \cite[Thm. 3.8]{demonet_tilting} The monomorphism in (b) is an isomorphism of lattices if and only if $\stt\Lambda$ is a finite set. In this case, the algebra $\Lambda$ is called \emph{$\tau$-tilting finite}.
	\end{enumerate}
\end{thm}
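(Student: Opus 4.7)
The plan is to assemble standard arguments from the foundational $\tau$-tilting literature, treating the three parts in turn.

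For part (a), I construct the meet as a set-theoretic intersection. Given $\T, \T' \in \tors\Lambda$, the subcategory $\T \cap \T'$ is closed under extensions and factors because each of $\T, \T'$ is, so $\T \wedge \T' := \T \cap \T'$ is a torsion class and is manifestly the largest torsion class contained in both. For the join, I take $\T \vee \T'$ to be the intersection of all torsion classes containing $\T \cup \T'$; this collection is nonempty since $\mods\Lambda$ is itself a torsion class, and arbitrary intersections of torsion classes are again torsion classes by the same argument.

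For part (b), the first step is well-definedness: for $M \sqcup P[1] \in \str\Lambda$, the subcategory $\Fac M$ is closed under factors by construction, and closure under extensions follows from $\tau$-rigidity via the Auslander--Reiten formula $\Ext^1(X, Y) \cong D\overline{\Hom}(Y, \tau X)$, giving $\Ext^1(M, N) = 0$ for every $N \in \Fac M$. Order preservation uses the preceding theorem: a left mutation at $B$ replaces $B$ with some $C \in \Fac M \cup \proj\Lambda[1]$, and comparing the two descriptions of $\lperp{\tau(-)} \cap P^\perp$ in part (a) of that theorem yields strict inclusion of the corresponding factor closures. Finally, the image of the map equals the set of functorially finite torsion classes: given a functorially finite $\T$, let $M$ be the sum of one copy of each indecomposable Ext-projective of $\T$ whose projective cover lies in $\T$, and absorb the remaining indecomposable projectives into $P$, so that $M \sqcup P[1]$ is support $\tau$-tilting with $\Fac M = \T$.

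For part (c), the map is always injective on support $\tau$-tilting pairs by the bijection between $\stt\Lambda$ and functorially finite torsion classes from part (b), so the content is about surjectivity onto all of $\tors\Lambda$. This surjectivity is equivalent to every torsion class being functorially finite, which by \cite{demonet_tilting} is equivalent to $\stt\Lambda$ being finite. The main obstacle is the nontrivial direction of \cite{demonet_tilting}: if $\stt\Lambda$ is finite, then every torsion class is functorially finite. The plan there is to show that for each $\T \in \tors\Lambda$ one can produce via mutation an ascending chain of functorially finite torsion classes $\Fac M_i$ with union $\T$; finiteness of $\stt\Lambda$ forces this chain to terminate, exhibiting $\T$ itself as some $\Fac M_n$. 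Once surjectivity is in hand, lattice preservation of the now-bijective map follows from the description of joins and meets of functorially finite torsion classes via mutation, completing the isomorphism of lattices.
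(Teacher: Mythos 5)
The paper offers no proof of this statement: it is a background theorem assembled from the literature, with (a), (b), (c) attributed to \cite{iyama_lattice}, \cite{adachi_tilting}, and \cite{demonet_tilting} respectively, so there is no in-paper argument to compare yours against. Your outline does follow the standard proofs from those sources: meets as intersections and joins as intersections of upper bounds for (a); the Auslander--Smal\o{} extension-closure criterion together with the Auslander--Reiten formula for (b); and the Demonet--Iyama--Jasso mutation-chain argument for (c). Your reading of where the real content sits is also the correct one: the map of (b) is always injective and hits exactly the functorially finite torsion classes, so the issue in (c) is surjectivity (the paper's word ``epimorphism'' in (b) is loose).

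Two steps need repair. First, your description of the inverse map in (b) is wrong: you take $M$ to be the sum of the indecomposable Ext-projectives of $\T$ \emph{whose projective cover lies in} $\T$. For $\Lambda = K(1\rightarrow 2)$ and $\T = \add S_1$, the unique Ext-projective $S_1$ has projective cover $P_1 \notin \T$, so your recipe returns $M=0$ instead of the correct $M = S_1$, $P = P_2$. The correct recipe from \cite{adachi_tilting} is $M = P(\T)$, the sum of \emph{all} indecomposable Ext-projectives of $\T$, with $P$ the sum of the indecomposable projectives $Q$ satisfying $\Hom(Q,P(\T)) = 0$. Second, in (c) the ascending-chain argument proves only that the chain terminates; to conclude that the terminal term is $\T$ you need the key lemma that any functorially finite $\Fac M \subsetneq \T$ admits a mutation $M'$ with $\Fac M \subsetneq \Fac M' \subseteq \T$. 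That lemma is the actual content of \cite[Thm.~3.8]{demonet_tilting} and should be stated explicitly rather than absorbed into the phrase ``produce via mutation an ascending chain with union $\T$.''
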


From now on, we assume our algebra $\Lambda$ is $\tau$-tilting finite.

\subsection{Semibricks and 2-Simple Minded Collections}
\label{sec:semibricks}

We recall that an (indecomposable) object $S \in \mathsf{mod}\Lambda$ (or more generally $\mathcal{D}^b(\mathsf{mod}\Lambda))$ is called a \emph{brick} if $\mathrm{End}(S)$ is a division algebra. A set $\mathcal{S}$ of objects in $\mathsf{mod}\Lambda$ is called a \emph{semibrick} if it consists of pairwise Hom-orthogonal bricks. We also remark that it is customary to use the term semibrick to refer to both the set $\mathcal{S}$ and the object $\displaystyle \bigsqcup_{S\in \mathcal{S}}S \in \mathcal{D}^b(\mathsf{mod}\Lambda)$. We denote by $\mathsf{brick}\Lambda$ (resp. $\mathsf{sbrick}\Lambda$) the set of isoclasses of bricks (resp. semibricks) in $\mathsf{mod}\Lambda$. Semibricks are closely related to 2-simple minded collections, defined as follows.

\begin{define}
\label{def:smc}
A finite collection $\mathcal{X} \subset \mathcal{D}^b(\mathsf{mod}\Lambda)$ is called \emph{simple minded} if
\begin{enumerate}[label=\upshape(\alph*)]
	\item Each $X\in\mathcal{X}$ is a brick.
	\item $\mathrm{Hom}(X_i,X_j) = 0$ for all $X_i\neq X_j \in \mathcal{X}$.
	\item $\mathrm{Hom}(X_i,X_j[m]) = 0$ for all $X_i,X_j \in \mathcal{X}$ and $m < 0$.
	\item $\mathsf{thick}(\mathcal{X}) = \mathcal{D}^b(\mathsf{mod}\Lambda)$, where $\mathsf{thick}(\mathcal{X})$ is the smallest triangulated subcategory of $\mathcal{D}^b(\mathsf{mod}\Lambda)$ containing $\mathcal{X}$ which is closed under direct summands.
	\end{enumerate}
If in addition $H^i(X) = 0$ for all $X \in \mathcal{X}$ and $i\neq -1,0$, then $\mathcal{X}$ is called a \emph{2-simple minded collection}. We denote by $\smc\Lambda$ the set of 2-simple minded collections in $\mathcal{D}^b(\mathsf{mod}\Lambda)$.
\end{define}

\begin{ex}
Let $\Lambda = K(2\leftarrow1)$. The category $\mathsf{mod}\Lambda$ contains three indecomposable objects, all of which are bricks: $S_1 = (0\leftarrow K)$, $P_1 = (K\xleftarrow{1}K)$, and $P_2 = (K\leftarrow 0)$. There are then five 2-simple minded collections for $\Lambda$:
$$\smc\Lambda = \{S_1\sqcup P_2, S_1\sqcup P_1[1], P_1\sqcup P_2[1],P_2\sqcup S_1[1], S_1[1]\sqcup P_2[1]\}.$$
\end{ex}

We recall the following facts about 2-simple minded collections.

\begin{prop}
\label{prop:sbricks} 
Let $\mathcal{X}$ be a 2-simple minded collection in $\mathcal{D}^b(\mathsf{mod}\Lambda)$. Then
\begin{enumerate}[label=\upshape(\alph*)]
	\item \cite[Cor. 5.5]{koenig_silting} As a set, $|\mathcal{X}| = \mathsf{rk}(\Lambda)$. Equivalently, as an element of the bounded derived category, $\mathsf{rk}(\mathcal{X}) = \mathsf{rk}(\Lambda)$.
	\item \cite[Rmk. 4.11]{brustle_ordered} Up to isomorphism, $\mathcal{X} = \mathcal{S}_p\sqcup \mathcal{S}_n[1]$ with $\mathcal{S}_p, \mathcal{S}_n \in \mathsf{sbrick}\Lambda$.
\end{enumerate}
\end{prop}

We now recall the notion of mutation for 2-simple minded collections.

\begin{defthm}\cite[Def. 7.5, Prop. 7.6, Lem. 7.8]{koenig_silting}
Let $\mathcal{X} = \mathcal{S}_p\sqcup \mathcal{S}_n[1]$ be in $\smc\Lambda$. Then for each brick $S \in \mathcal{S}_p$, there is a 2-simple minded collection $\mu_S(\mathcal{X})$, called the \emph{left mutation} of $\mathcal{X}$ at $S$, given as follows.
\begin{itemize}
	\item The module $S$ is replaced with $S[1]$.
	\item Each module $S' \in \mathcal{S}_p\setminus \{S\}$ is replaced with the cone of
	$$S'[-1] \xrightarrow{g_{S'}} E$$
	where the map $g_{S'}$ is a minimal left $\mathsf{Filt} S$-approximation.
	\item Each shifted module $S''[1] \in \mathcal{S}_n[1]$ is replaced with the cone of
	$$S'' \xrightarrow{g_{S''}} E$$
	where the map $g_{S''}$ is a minimal left $\mathsf{Filt} S$-approximation.
\end{itemize}
In particular, every $S'[-1] \in (\mathcal{S}_p\setminus \{S\})[-1]$ and every $S'' \in \mathcal{S}_n$ admits a minimal left $\mathsf{Filt} S$-approximation. Moreover, for $S'' \in \mathcal{S}_n$, the map $g_{S''}$ must be either a monomorphism or an epimorphism so that the cone of $g_{S''}$ is isomorphic to either a module or a shifted module.
\end{defthm}

Under this definition of mutation, $\smc\Lambda$ forms a poset. Now recall that a subcategory $W\subseteq \mathsf{mod}\Lambda$ is called \emph{wide} if it is closed under extensions, kernels, and cokernels. That is, it is an exactly embedded abelian subcategory. We denote by $\mathsf{wide}\Lambda$ the set of wide subcategories of $\mathsf{mod}\Lambda$. Under inclusion, $\mathsf{wide}\Lambda$ forms a poset. Moreover, since $\Lambda$ is $\tau$-tilting finite, we have the following.

\begin{prop}\
\smallskip
\begin{enumerate}[label=\upshape{(\alph*)}]
	\item \cite[Thm. 3.3]{asai_semibricks} The maps $\mathcal{X} = \mathcal{S}_p\sqcup \mathcal{S}_n[1]\mapsto \mathcal{S}_p$ and $\mathcal{X}=\mathcal{S}_p\sqcup \mathcal{S}_n[1]\mapsto \mathcal{S}_n$ are bijections $\smc\Lambda\rightarrow\mathsf{sbrick}\Lambda$.
	\item \cite[Cor. 4.3, Thm. 4.9]{brustle_ordered} There is an isomorphism of lattices $\smc\Lambda\rightarrow \mathsf{tors}\Lambda$ given by
	$$\mathcal{X} = \mathcal{S}_p\sqcup \mathcal{S}_n[1] \mapsto \mathsf{Filt}\mathsf{Fac} \mathcal{S}_p.$$
	\item \cite[Prop. 2.26]{asai_semibricks} There is a bijection $\smc\Lambda\rightarrow \mathsf{wide}\Lambda$ given by
	$$\mathcal{X} = \mathcal{S}_p\sqcup \mathcal{S}_n[1] \mapsto \mathsf{Filt} \mathcal{S}_p.$$
\end{enumerate}
\end{prop}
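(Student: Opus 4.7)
The plan is to derive all three statements from two core bijections due to Asai: $\sbrick\Lambda \to \tors\Lambda$ via $\mathcal{S} \mapsto \Filt\Fac\mathcal{S}$ and its dual variant $\sbrick\Lambda \to \torsf\Lambda$ via $\mathcal{S} \mapsto \Filt\Sub\mathcal{S}$, where $\torsf\Lambda$ denotes the poset of torsion-free classes. Part (a) is the heart of the argument; parts (b) and (c) follow by composition.

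For (a), Proposition \ref{prop:sbricks}(b) already ensures $\S_p, \S_n \in \sbrick\Lambda$, so it suffices to construct an inverse to $\X \mapsto \S_p$. Given $\mathcal{S} \in \sbrick\Lambda$, set $\mathcal{T} := \Filt\Fac\mathcal{S}$ and $\mathcal{F} := \rperp{\mathcal{T}}$; by the dual Asai bijection we may write $\mathcal{F} = \Filt\Sub\mathcal{S}'$ for a unique semibrick $\mathcal{S}'$, and the claim is that $\X := \mathcal{S} \sqcup \mathcal{S}'[1]$ lies in $\smc\Lambda$. The brick and internal Hom-orthogonality conditions for $\mathcal{S}$ and $\mathcal{S}'$ are immediate; the cross vanishing $\Hom(S, S'') = 0$ for $S \in \mathcal{S}$, $S'' \in \mathcal{S}'$ follows since $S \in \mathcal{T}$ and $S'' \in \mathcal{F}$; and $\Hom(X_i, X_j[m]) = 0$ for $m < 0$ reduces to $\Hom(\mathcal{S}, \mathcal{S}'[m]) = 0$, which holds trivially because both sit in degree zero. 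Injectivity of $\X \mapsto \S_p$ is automatic: $\S_p$ determines $\mathcal{T}$, hence $\mathcal{F}$, hence $\S_n$. A dual argument handles $\X \mapsto \S_n$.

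Parts (b) and (c) then follow by composing the bijection $\X \mapsto \S_p$ from (a) with the appropriate Asai bijection. For (b), composing with $\mathcal{S} \mapsto \Filt\Fac\mathcal{S}$ yields the claimed set-theoretic bijection $\smc\Lambda \to \tors\Lambda$; to upgrade this to an isomorphism of lattices I would verify that left mutation of $\X$ at $S \in \S_p$ corresponds to passing to the unique maximal torsion class properly contained in $\Filt\Fac\S_p$ whose Hasse label is $S$, so that covering relations (and hence, by finiteness of both lattices under the $\tau$-tilting finite hypothesis, the full lattice structures) agree. Part (c) is analogous, composing instead with the Asai bijection $\sbrick\Lambda \to \wide\Lambda$, $\mathcal{S} \mapsto \Filt\mathcal{S}$.

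The main obstacle is verifying the thick generation condition $\thick(\X) = \Db(\mods\Lambda)$ in the construction of the inverse in (a). The strategy is to use the torsion pair $(\mathcal{T}, \mathcal{F})$ to write each $M \in \mods\Lambda$ as an extension of an object of $\mathcal{F}$ by one of $\mathcal{T}$, reducing to the claim that every object of $\mathcal{T}$ (respectively $\mathcal{F}$) lies in $\thick(\mathcal{S})$ (respectively $\thick(\mathcal{S}')$). The latter proceeds by induction on composition length, using that the elements of $\mathcal{S}$ are the simple objects of the wide subcategory $\Filt\mathcal{S}$, so every nonzero $M \in \mathcal{T}$ admits a nonzero morphism from some element of $\mathcal{S}$; the shift $\mathcal{S}'[1]$ then provides the negative shift needed to generate all of $\Db(\mods\Lambda)$.
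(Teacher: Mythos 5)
The paper gives no proof of this proposition at all --- all three parts are quoted verbatim from Asai and from Br\"ustle--Yang --- so the only question is whether your reconstruction is sound, and as written it has two genuine gaps. The first is in the orthogonality check for $\X=\S\sqcup\S'[1]$. The second bullet of Definition \ref{def:smc}, applied to $X_i=S\in\S$ and $X_j=S''[1]$ with $S''\in\S'$, requires $\Hom_{\Db(\mods\Lambda)}(S,S''[1])=\Ext^1_\Lambda(S,S'')=0$, and this does \emph{not} follow from $S\in\T$ and $S''\in\F$: a torsion pair only gives $\Hom(\T,\F)=0$, while $\Ext^1(\T,\F)$ is nonzero in general (already for $\Lambda=K(1\to2)$ and $\T=\Fac P_1$ one has $S_1\in\T$, $S_2\in\T^\perp$, and $\Ext^1(S_1,S_2)\neq0$). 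The vanishing of $\Ext^1(\S,\S')$ for the particular semibricks attached to $\T$ and $\T^\perp$ is true, but it is essentially the content of the Koenig--Yang/Asai correspondence (it comes from identifying $\S\sqcup\S'[1]$ with the simple objects of the HRS-tilted heart at $(\T,\F)$), and your argument never establishes it. Relatedly, your injectivity argument for $\X\mapsto\S_p$ silently assumes that for an \emph{arbitrary} 2-simple minded collection the negative part is recovered as the semibrick of $(\Filt\Fac\S_p)^\perp$; that compatibility is again part of what is being proved, not something you may presuppose.

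The second gap is in the thick-generation step, where the reduction is simply false. You reduce $\thick(\X)=\Db(\mods\Lambda)$ to the claim that every object of $\T=\Filt\Fac\S$ lies in $\thick(\S)$ and every object of $\F$ lies in $\thick(\S')$. Take $\Lambda=K(1\to2)$ and $\S=\{P_1\}$: then $\T=\Fac P_1$ contains $S_1$, but $\thick(P_1)=\add\{P_1[n]\mid n\in\Z\}$ (since $P_1$ is exceptional) does not. Your induction also conflates $\Filt\S$ --- the subcategory in which the elements of $\S$ really are the simple objects --- with the much larger $\Filt\Fac\S$; a nonzero map from some $S\in\S$ to $M\in\T$ need not be injective, and its kernel and image leave the categories you control. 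The correct assertion is only $\mods\Lambda\subseteq\thick(\S\cup\S')$, and it genuinely needs both semibricks at once: in the example above $S_1=\mathrm{cone}(S_2\to P_1)$ requires $S_2\in\S'$. Parts (b) and (c) would follow by composition as you say once (a) is in place, but the Hasse-arrow matching you invoke for the lattice isomorphism in (b) is itself a nontrivial compatibility of mutations that would need an argument.
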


One of our goals is to define 2-simple minded collections using pairwise compatibility conditions. This leads to the following definition.

\begin{define}
\label{def:sbp}
Let $\mathcal{S}_p,\mathcal{S}_n$ be semibricks in $\mathsf{mod}\Lambda$. We say that $\mathcal{S}_p\sqcup \mathcal{S}_n[1]$ is a \emph{semibrick pair} if
$$\mathrm{Hom}(\mathcal{S}_p,\mathcal{S}_n) = 0 = \mathrm{Ext}(\mathcal{S}_p,\mathcal{S}_n).$$
In particular, a semibrick pair $\mathcal{S}_p\sqcup \mathcal{S}_n[1]$ is a 2-simple minded collection if and only if $\mathsf{thick}(\mathcal{S}_p\sqcup \mathcal{S}_n[1]) = \mathcal{D}^b(\mathsf{mod}\Lambda)$. We say the semibrick pair $\mathcal{S}_p\sqcup \mathcal{S}_n[1]$ is \emph{completable} if it is a subset of a 2-simple minded collection.
\end{define}

The following shows that not all semibrick pairs are completable.

\begin{cex}
Consider the quiver
\begin{center}
\begin{tikzpicture}
	\node at (3,0) {1};
	\node at (2,0) {2};
	\node at (1,0) {3};
	\node at (0,0) {$Q=$};
	\begin{scope}[decoration={
	markings,
	mark=at position 1.0 with {\arrow[scale=1.3]{>}}}
	]
		\draw[postaction={decorate}] (1.75,0)--(1.25,0);
		\draw[postaction={decorate}] (2.75,0)--(2.25,0);
		\draw[postaction={decorate}] (1,-0.25)--(1,-0.5)--(3,-0.5)--(3,-0.25);
	\end{scope}
\end{tikzpicture}
\end{center}
and let $\Lambda = KQ/\mathrm{rad}^2 KQ$. Then ${\scriptsize\begin{matrix}1\\2\end{matrix}}\sqcup{\scriptsize\begin{matrix}2\\3\end{matrix}}[1]$ is a semibrick pair, but cannot be completed to a 2-simple minded collection. Indeed, the minimal left-$\mathsf{Filt}\left({\scriptsize\begin{matrix}1\\2\end{matrix}}\right)$ approximation ${\scriptsize\begin{matrix}2\\3\end{matrix}}\rightarrow {\scriptsize\begin{matrix}1\\2\end{matrix}}$ is neither mono nor epi.
\end{cex}

It is a natural question whether the only obstruction to completing a semibrick pair is the existence of pairs $S,S'$ of bricks such that $S\sqcup S'[1]$ is a semibrick pair with a minimal left-$\mathsf{Filt} S$ approximation $S'\rightarrow E$ which is neither mono nor epi. This leads to the following definition.

\begin{define}
A semibrick pair $\mathcal{X} = \mathcal{S}_p \sqcup \mathcal{S}_n[1]$ is called \emph{singly left mutation compatible}\footnote{The original version of this manuscript used the name ``mutation compatible'' for this property. The reason for this change is subtle and is highlighted in the followup \cite{hanson_pairwise}.} if for all bricks $S \in \mathcal{S}_p$ and $S'\in \mathcal{S}_n$ every minimal left $\mathsf{Filt} S$-approximation $g_{S'}: S'\rightarrow E$ is either a monomorphism or an epimorphism.
\end{define}

We observe that it can be determined whether a collection $\mathcal{X} = \mathcal{S}_p \sqcup \mathcal{S}_n[1]$ of bricks in $\mathsf{mod}\Lambda \sqcup \mathsf{mod}\Lambda[1]$ is a singly left mutation compatible semibrick pair by checking each pair of indecomposable direct summands. That is, $\mathcal{X}$ is a singly left mutation compatible semibrick pair if and only if all of the following hold:
\begin{enumerate}
	\item For all $S \neq T \in \mathcal{S}_p$, we have $\mathrm{Hom}(S,T) = 0 = \mathrm{Hom}(T,S)$.
	\item For all $S \neq T \in \mathcal{S}_n$, we have $\mathrm{Hom}(S,T) = 0 = \mathrm{Hom}(T,S)$.
	\item For all $S\in \mathcal{S}_p$ and $T \in \mathcal{S}_n$, we have $\mathrm{Hom}(S,T) = 0 = \mathrm{Ext}(S,T)$ and there exists a minimal left $(\mathsf{Filt} S)$-approximation $T \rightarrow S_T$ which is either mono or epi.
\end{enumerate}
We refer to conditions (1)-(3) as \emph{pairwise compatibility conditions} and say that singly left mutation compatible semibrick pairs are ``characterized by pairwise compatibility conditions''. As we shall see in Section \ref{sec:cat0}, pairwise compatibility conditions are one of the key ingredients used to show a cubical category is locally $\mathrm{CAT}(0)$. In particular, we will be interested in whether completable semibrick pairs can be characterized by pairwise compatibility conditions.

We prove in Section \ref{sec:nakayama} that every singly left mutation compatible semibrick pair is completable in the case of Nakayama algebras. In particular, this means that the completable semibrick pairs for Nakayama algebras are indeed characterized by pairwise compatibility conditions. As a followup to this paper, we show in \cite{hanson_pairwise} that this is not true in general. The simplest counterexample we present is a gentle algebra whose quiver contains 4 vertices. In \cite{BaH}, the first author and Barnard further show that if $\mathsf{rk}(\Lambda) \leq 3$, then completable semibrick pairs can always be characterized by pairwise compatibility conditions and that if $\Lambda$ is a preprojective algebra of Dynkin type with $\mathsf{rk}(\Lambda) \geq 4$, then completable semibrick pairs cannot be characterized by pairwise compatibility conditions.

\subsection{The Category of Buan and Marsh}
\label{sec:category}

We begin with the following result, allowing us to identify wide subcategories of $\Lambda$ with module categories. Our notation comes from \cite{buan_category}.

\begin{define}\cite[Def. 3.3]{jasso_reduction},\cite[Sec. 4.2]{demonet_lattice}
	Let $M \sqcup P[1]$ be a support $\tau$-rigid pair for $\Lambda$. Denote
	$$J(M\sqcup P[1]) := (M\sqcup P)^\perp\cap\lperp{\tau M} \subseteq \mathsf{mod}\Lambda,$$
	which we call the \emph{Jasso category} of $M\sqcup P[1]$.
\end{define}

\begin{thm}
\label{thm:basicWide}
Let $M \sqcup P[1]$ be a support $\tau$-rigid pair for $\Lambda$. 
\begin{enumerate}[label=\upshape(\alph*)]
	\item \cite[Thm. 4.12]{demonet_lattice},\cite[Thm. 3.8]{jasso_reduction} The Jasso category of $M\sqcup P[1]$ is wide and is equivalent to the category $\mathsf{mod}\Lambda'$ for some $\Lambda'$ which satisfies $\mathsf{rk}(M) + \mathsf{rk}(P) + \mathsf{rk}(\Lambda') = \mathsf{rk}(\Lambda)$.
	\item \cite[Thm. 4.18]{demonet_lattice} Every wide subcategory $W \in \mathsf{wide}\Lambda$ is the Jasso category of at least one support $\tau$-rigid pair.
	\end{enumerate}
\end{thm}

By identifying each wide subcategory $W \in \mathsf{wide}\Lambda$ with the algebra $\mathsf{mod}\Lambda'$ given in part (a), we can define support $\tau$-rigid pairs and Jasso categories in $W$. This idea is due to Jasso \cite{jasso_reduction}.

\begin{define}
Let $W \in \mathsf{wide} \Lambda$ be a wide subcategory, identified with $\mathsf{mod}\Lambda'$ as above.
\begin{enumerate}
	\item Let $M \in W$ and $P \in \mathsf{proj} W$. Then $M\sqcup P[1]$ is a \emph{support $\tau$-tilting pair} for $W$ if $\mathrm{Hom}(P,M) = 0 = \mathrm{Hom}(M,\tau_W M)$, where $\tau_W$ is the Auslander-Reiten translate in $W(=\mathsf{mod}\Lambda')$. We denote by $\str W$ the set of (isoclasses of) support $\tau$-rigid pairs in $W$.
	\item Let $M \sqcup P[1]$ be a support $\tau$-rigid pair for $W$. Then the \emph{Jasso category} of $M\sqcup P[1]$ in $W$ is the (wide) subcategory
		$$J_W(M\sqcup P[1]):= (M\sqcup P)^\perp\cap\lperp{\tau_W M}\cap W.$$
\end{enumerate}
\end{define}

\begin{rem}
	In general, modules which are projective in $W$ may not be projective in $\mathsf{mod}\Lambda$. Likewise, the functor $\tau_W$ and the Jasso categories in $W$ are often different than in $\mathsf{mod}\Lambda$. Moreover, since $W$ is identified with $\mathsf{mod}\Lambda'$, all of the results about $\tau$-tilting theory and Jasso categories in $\Lambda$ hold in $W$ as well. In particular, this means $\mathrm{Hom}_\Lambda(M,\tau_W M) = 0$ if and only if $\mathrm{Ext}^1_\Lambda(M,(\mathsf{Fac} M)\cap W) = 0$ by \cite[Prop. 5.8]{auslander_almost}.
\end{rem}

Now recall that given an object $U$ in $\mathcal{D}^b(\mathsf{mod}\Lambda)$, a $t$-tuple $(U_1,\ldots,U_t)$ is an \emph{ordered decomposition} of $U$ if each $U_i$ is indecomposable and $\bigsqcup_{i=1}^tU_i = U$. Moreover, for $M,N \in \mathsf{mod}\Lambda$ indecomposable, let $\mathrm{rHom}(M,N)$ be the set of morphisms $M\rightarrow N$ which are not isomorphisms. Then we denote
$$\mathrm{rad}(M,N):= \sum_{f \in \mathrm{rHom}(M,N)} \mathrm{Im}(f).$$
This definition then extends additively to arbitrary modules $M,N \in \mathsf{mod}\Lambda$.

We now recall the following bijections of Buan and Marsh.

\begin{thm}
\label{thm:littlebijections}
Let $W \in \mathsf{wide}\Lambda$ be a wide subcategory.
\begin{enumerate}[label=\upshape(\alph*)]
	\item \cite[Prop. 5.6]{buan_exceptional} Let $M \in \mathsf{ind}(\str W)$ be a module. Then there is a bijection
	$$\mathcal{E}_M^W: \{N\sqcup Q[1]|M\sqcup N\sqcup Q[1] \in \str W\} \leftrightarrow \str J_W(M)$$
	summarized as follows. Decompose $$N\sqcup Q[1] = N_1\sqcup\cdots\sqcup N_n\sqcup Q_1[1]\sqcup\cdots\sqcup Q_q[1]$$ into a direct sum of indecomposable objects. Then
	\begin{eqnarray*}
		\mathcal{E}_M^W(N_i) &=& N_i/\mathrm{rad}(M,N_i) \textnormal{, if }N_i \notin \mathsf{Fac} M\\
		\mathcal{E}_M^W(N_i) &=& B/\mathrm{rad}(M,B)[1] \textnormal{ for some direct summand, }B,\textnormal{ of the}\\
		&&\textnormal{Bongartz complement of }M\textnormal{ in }W\textnormal{, if } N_i\in\mathsf{Fac} M\\
		\mathcal{E}_M^W(Q_i[1]) &=& B/\mathrm{rad}(M,B)[1] \textnormal{ for some direct summand, } B,\textnormal{ of the }\\
		&& \textnormal{Bongartz complement of }M\textnormal{ in }W
	\end{eqnarray*}
	and the formula for the bijection extends additively.
	\item \cite[Prop. 5.10a]{buan_exceptional} Let $P[1] \in \mathsf{ind}(\str W)$ be a shifted projective. Then there is a bijection
	$$\mathcal{E}_{P[1]}^W: \{N\sqcup Q[1]|N\sqcup P[1]\sqcup Q[1] \in \str W\} \leftrightarrow \str J_W(P[1])$$ summarized as follows. Decompose $$N\sqcup Q[1] = N_1\sqcup\cdots\sqcup N_n\sqcup Q_1[1]\sqcup\cdots\sqcup Q_q[1]$$ into a direct sum of indecomposable objects. Then
	\begin{eqnarray*}
		\mathcal{E}_{P[1]}^W(N_i) &=& N_i\\
		\mathcal{E}_{P[1]}^W(Q_i[1]) &=& \mathcal{E}_P^W(Q_i)[1] = Q_i/\mathrm{rad}(P,Q_i)[1]
	\end{eqnarray*}
	and the formula for the bijection extends additively.
	\item \cite[Thm. 3.6, 5.9]{buan_category} Let $M \sqcup P[1] \in \str W$. Then there is a bijection
	$$\mathcal{E}_{M\sqcup P[1]}^W: \{N\sqcup Q[1]|M\sqcup N\sqcup P[1]\sqcup Q[1]\in\str W\}\leftrightarrow \str J_W(M\sqcup P[1])$$ given as follows. Let $(U_1\ldots U_t)$ be any ordered decomposition of $M\sqcup P[1]$. Then define recursively
	$$\mathcal{E}_{M\sqcup P[1]}^W = \mathcal{E}_{\mathcal{E}_{U_1}^W(U_2\sqcup\cdots\sqcup U_t)}^{J_W(U_1)}\circ\mathcal{E}_{U_1}^W.$$
	In particular, the definition of $\mathcal{E}^W_{M\sqcup P[1]}$ is independent of the choice of $(U_1,\ldots,U_t)$.
\end{enumerate}
\end{thm}

These bijections, given explicitly in \cite{buan_category} and \cite{buan_exceptional}, allow for the definition of the $\tau$-cluster morphism category of $\Lambda$.

\begin{define}\cite[Section 1]{buan_category}
\label{def:category}
The \emph{$\tau$-cluster morphism category} of $\Lambda$, denoted $\mathfrak{W}(\Lambda)$, is defined as follows:
\begin{enumerate}
	\item The objects of $\mathfrak{W}(\Lambda)$ are the wide subcategories of $\mathsf{mod}\Lambda$.
	\item For each pair of wide subcategories $W_1,W_2\in\mathsf{wide}\Lambda$, we define
		$$\mathrm{Hom}_{\mathfrak{W}(\Lambda)}(W_1,W_2) = \{[U]|U \in \str{W_1} \textnormal{ and }J_{W_1}(U) = W_2\}.$$
		In particular, $\mathrm{Hom}_{\mathfrak{W}(\Lambda)}(W_1,W_2) = 0$ unless $W_1 \supseteq W_2$, and in the case that $W_1 = W_2$, there is a single morphism $[0]:W_1\rightarrow W_1$.
	\item For $[U] \in \mathrm{Hom}_{\mathfrak{W}(\Lambda)}(W_1, W_2)$ and $[V] \in \mathrm{Hom}_{\mathfrak{W}(\Lambda)}(W_2,W_3)$, we define
		$$[V]\circ[U] = \left[U\sqcup\left(\mathcal{E}_U^{W_1}\right)\inv(V)\right].$$
\end{enumerate}
\end{define}

We remark that this definition agrees with the definition of the classical cluster morphism category, defined for hereditary algebras in \cite[Section 1]{igusa_signed}.

\section{Cubical Categories}
\label{sec:cubical}

Cubical categories were introduced by the second author in \cite{igusa_category} as a tool to study the geometry of the classical cluster morphism categories and picture groups of representation-finite hereditary algebras. The goal of this section will be to show that the $\tau$-cluster morphism category of any $\tau$-tilting finite algebra is cubical (Theorem \ref{thm:cubical}), extending the results of \cite{igusa_category} and \cite{igusa_signed}.

As a motivating example of a cubical category, consider the category $\mathcal{I}^k$, whose objects are the subsets of $\{1,\ldots,k\}$ and whose morphisms are inclusions. This is referred to as the \emph{standard $k$-cube category}.

Before stating the general definition, we recall the notion of the \emph{factorization category} of a morphism $A\xrightarrow{f} B$ in a category $\mathcal{C}$. This is the category $\mathsf{Faq}(f)$ whose objects are factorizations $A\xrightarrow{g}C\xrightarrow{h} B$ with $h\circ g = f$ and whose morphisms
$$(A\xrightarrow{g}C\xrightarrow{h} B)\rightarrow(A\xrightarrow{g'}C'\xrightarrow{h'}B)$$
are morphisms $\phi:C\rightarrow C'$ in $\mathcal{C}$ such that $\phi\circ g = g'$ and $h = h'\circ \phi$. Given an object $A\xrightarrow{g}C\xrightarrow{h} B$ in $\mathsf{Faq}(f)$, we call $g$ a \emph{first factor} of $f$ if $g$ is irreducible in $\mathcal{C}$. Likewise, we call $h$ a \emph{last factor} of $f$ if $h$ is irreducible in $\mathcal{C}$.

\begin{rem}\label{ref: basic property of I0}
We observe that the following are equivalent.
\begin{enumerate}
\item $\mathsf{Faq}(id_A)\cong \mathcal I^0$ for any object $A$ in $\mathcal{C}$.
\item The category $\mathcal{C}$ has only one object in every isomorphism class and each object of $\mathcal{C}$ is indecomposable.
\end{enumerate}
Furthermore, when this property holds, every irreducible morphism $f:A\to B$ will have exactly two factorizations: $f\circ id_A$ and $id_B\circ f$. Equivalently, $\mathsf{Faq}(f)\cong \mathcal I^1$.
\end{rem}

The following definition is an extension of these properties. Figure \ref{fig:cube} gives an example.

\begin{define}\cite[Def. 3.2]{igusa_category}
\label{def:cubical}
A \emph{cubical category} is a small category $\mathcal{C}$ with the following properties:
\begin{enumerate}[label=\upshape(\alph*)]
	\item Every morphism $f: A \rightarrow B$ in $\mathcal{C}$ has a \emph{rank}, $\mathsf{rk}(f)$, which is a non-negative integer so that $\mathsf{rk}(f\circ g) = \mathsf{rk}(f) + \mathsf{rk}(g)$ for all composable morphisms $f,g$ in $\mathcal{C}$. In particular, $\mathsf{rk}(id_A)=0$ for any object $A$.
	\item If $\mathsf{rk}(f) = k$ then the factorization category of $f$ is isomorphic to the standard $k$-cube category: $\mathsf{Faq}(f) \cong \mathcal{I}^k$. In particular, $\mathcal{C}$ satisfies the properties in Remark \ref{ref: basic property of I0}.
	\item The forgetful functor $\mathsf{Faq}(f) \rightarrow \mathcal{C}$ taking $A\xrightarrow{g}C\xrightarrow{h}B$ to $C$ is an embedding. In particular, every morphism of rank $k$ has $k$ distinct first factors and $k$ distinct last factors.
	\item Every morphism of rank $k$ is determined by its $k$ first factors.
	\item Every morphism of rank $k$ is determined by its $k$ last factors.
\end{enumerate}
\end{define}

The remainder of this section is devoted to showing the category $\mathfrak{W}(\Lambda)$ is cubical for any $\tau$-tilting finite algebra $\Lambda$. We first recall how to determine the first and last factors of a morphism in $\mathfrak{W}(\Lambda)$. In \cite{igusa_signed}, exceptional sequences correspond to decompositions of morphisms in $\mathfrak{W}(\Lambda)$ into irreducible factors. This result applies in the more general context under the broader definition of Buan and Marsh.

\begin{define}\cite[Def. 1.3]{buan_exceptional}
\label{def:exceptional}
Let $W$ be a wide subcategory of $\mathsf{mod}\Lambda$ and let $U_1,\ldots,U_t\in\mathcal{D}^b(W)$ be indecomposable. Then $(U_1,\ldots,U_t)$ is called a \emph{signed $\tau$-exceptional sequence} (of length $t$) if $U_t \in \str W$ and $(U_1,\ldots,U_{t-1})$ is a signed $\tau$-exceptional sequence in $J_W(U_t)$.
\end{define}

\begin{thm}\cite[Rmk. 5.12, Thm. 11.8]{buan_exceptional}
\label{thm:bigbijection}
Let $W$ be a wide subcategory of $\mathsf{mod}\Lambda$. Denote by $\mathcal{O}_t(W)$ the set of ordered decompositions of support $\tau$-rigid objects for $W$ with $t$ indecomposable direct summands. Then there is a bijection
$$\mathcal{O}_t(W) \xrightarrow{\Psi_t^W}\{\tau\textnormal{-exceptional sequences for }W\textnormal{ of length }t\}$$
given inductively by
$$\Psi_t^W(U_1,...,U_t) = (\Psi_{t-1}^{J_W(U_t)}(\mathcal{E}^W_{U_t}(U_1),...,\mathcal{E}^W_{U_t}(U_{t-1})), U_t).$$
Moreover, for all $U \in \str(\Lambda)$ with $t$ indecomposable direct summands, the map $\Psi_t^W$ induces a bijection
$$\{\textnormal{Ordered decompositions of }U\}$$
$$\downarrow$$
$$\{\textnormal{Factorizations of }[U]\textnormal{ into irreducible morphisms in }\mathfrak{W}(\Lambda)\}.$$
\end{thm}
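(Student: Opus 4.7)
The plan is to prove both bijections by induction on $t$, exploiting the parallel recursive structure of $\Psi_t^W$, the definition of signed $\tau$-exceptional sequences (Definition \ref{def:exceptional}), and morphism composition in $\W(\Lambda)$ (Definition \ref{def:category}).

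For the base case $t = 1$, an ordered decomposition of a support $\tau$-rigid object for $W$ with one summand is an element of $\ind(\str W)$, which is tautologically the same as a $\tau$-exceptional sequence of length $1$ in $W$, so $\Psi_1^W$ is the identity. For the inductive step, let $(U_1, \ldots, U_t)$ be an ordered decomposition of some $U \in \str W$ with $U_t \in \ind(\str W)$. By Theorem \ref{thm:littlebijections}(a)--(b), the map $\E_{U_t}^W$ is a bijection between support $\tau$-rigid completions of $U_t$ in $W$ and elements of $\str J_W(U_t)$, and by direct inspection of its piecewise formulas it is additive on direct summands and sends indecomposables to indecomposables. Therefore $(\E_{U_t}^W(U_1), \ldots, \E_{U_t}^W(U_{t-1}))$ is an ordered decomposition of a support $\tau$-rigid object in $J_W(U_t)$ with $t-1$ summands. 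By the inductive hypothesis applied to the wide subcategory $J_W(U_t)$, the map $\Psi_{t-1}^{J_W(U_t)}$ sends it to a $\tau$-exceptional sequence of length $t-1$ in $J_W(U_t)$; appending $U_t$ then produces a $\tau$-exceptional sequence of length $t$ in $W$ by Definition \ref{def:exceptional}. The construction is visibly invertible: given a $\tau$-exceptional sequence $(V_1, \ldots, V_t)$ in $W$, one takes $V_t$ as the last summand, inverts $\Psi_{t-1}^{J_W(V_t)}$ inductively, and applies $(\E_{V_t}^W)^{-1}$ to recover $(U_1, \ldots, U_{t-1})$.

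For the induced bijection on factorizations of a fixed $[U]$, the key is to unwind the composition law $[V]\circ [U'] = [U' \sqcup (\E_{U'}^{W'})^{-1}(V)]$ in $\W(\Lambda)$. Iterating this formula, a factorization $[U] = [V_t] \circ \cdots \circ [V_1]$ into $t$ irreducible morphisms (equivalently, each $V_i$ indecomposable) is built step by step: setting $U_1 := V_1$, $U_2 := (\E_{U_1}^W)^{-1}(V_2)$, $U_3 := (\E_{U_1 \sqcup U_2}^W)^{-1}(V_3)$, and so on, we recover an ordered decomposition $(U_1, \ldots, U_t)$ of $U$. Conversely, an ordered decomposition of $U$ yields a factorization into irreducibles by the same recursive recipe, and the resulting $\tau$-exceptional sequence $(V_1, \ldots, V_t)$ is precisely $\Psi_t^W(U_1, \ldots, U_t)$, by comparing the recursive definitions of composition in $\W(\Lambda)$ and of $\Psi_t^W$.

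The main obstacle is bookkeeping: one must verify that $\E_{M\sqcup P[1]}^W$ from Theorem \ref{thm:littlebijections}(c) is well-defined independent of the chosen ordered decomposition used in its recursive definition, and that the additive behavior promised by parts (a)--(c) truly preserves ordered decompositions and indecomposability at every recursive step. This is largely the content of \cite{buan_exceptional}, and once granted, both bijections follow by chasing definitions through the induction. A minor subtlety is that the case $U_t = P[1]$ requires invoking Theorem \ref{thm:littlebijections}(b) rather than (a), but the formulas behave identically at the level of indecomposable summands, so the inductive argument proceeds uniformly.
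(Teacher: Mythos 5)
This theorem is not proved in the paper at all: it is imported verbatim from \cite[Rmk.~5.12, Thm.~11.8]{buan_exceptional}, so there is no in-paper argument to compare against. Your inductive sketch is nonetheless a faithful reconstruction of how the cited proof is structured: the base case $t=1$ is tautological from Definition \ref{def:exceptional}, and the inductive step rides on the fact that $\E^W_{U_t}$ is a bijection that extends additively over direct summands and preserves indecomposability, so that it carries ordered decompositions of complements of $U_t$ to ordered decompositions in $J_W(U_t)$. You are also right to flag that the real content --- well-definedness of $\E^W_{M\sqcup P[1]}$ independently of the chosen ordered decomposition, and the verification that the piecewise formulas genuinely land in $\str J_W(U_t)$ --- is exactly what \cite{buan_exceptional} establishes and cannot be rederived from the statements quoted in this paper alone; deferring it is the honest move. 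One bookkeeping caveat: the recursion for $\Psi_t^W$ peels off the \emph{last} summand $U_t$, whereas the composition law $[V]\circ[U'] = [U'\sqcup(\E^{W}_{U'})^{-1}(V)]$ peels off the \emph{first} irreducible factor, so your identification of the factorization $[V_t]\circ\cdots\circ[V_1]$ with $\Psi_t^W(U_1,\ldots,U_t)$ is off by an order reversal (or a relabeling of indices) as written; this does not affect the existence of the induced bijection but should be stated consistently.
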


In particular, this shows that the first(resp. last) factors of a morphism $[U]$ in $\mathfrak{W}(\Lambda)$ are the morphisms $[U_i]$ for which there is an ordered decomposition of $U$ corresponding to a $\tau$-exceptional sequence with $U_i$ as the rightmost (resp. leftmost) entry. Before examining this further, we prove the following lemma and corollary. The proofs are similar to \cite[Lem. 3.13]{igusa_category}.

\begin{lem}
\label{lem:cubeembedding}
Let $[U]: W \rightarrow W'$ be a morphism in $\mathfrak{W}(\Lambda)$.
\begin{enumerate}[label=\upshape(\alph*)]
	\item If $V \ncong L \in \mathsf{add}(U)$, then $J_W(V) \neq J_W(L)$.
	\item There is exactly one morphism $$(W\xrightarrow{[V_1]}J_W(V_1)\xrightarrow{[V_2]}W') \rightarrow (W\xrightarrow{[L_1]}J_W(L_1)\xrightarrow{[L_2]}W')$$ in $\mathsf{Faq}[U]$ if and only if $V_1 \in \mathsf{add}(L_1)$. Otherwise, there are no such morphisms.
\end{enumerate}
\end{lem}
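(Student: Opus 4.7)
My plan is to prove (a) by induction on $rk(W)$, and then prove (b) by direct computation from the composition rule of Definition~\ref{def:category} together with the $\E$ bijections of Theorem~\ref{thm:littlebijections}.

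For (a), the case $V = 0$ (or symmetrically $L = 0$) is immediate from the rank formula in Theorem~\ref{thm:basicWide}(a): if $V \neq L$ then one of them is nonzero, giving $J_W(V)$ and $J_W(L)$ different ranks. For the main case, I split on whether $V$ and $L$ share a common summand. If $V_0 := V \cap L \neq 0$, then Theorem~\ref{thm:littlebijections}(c), applied to ordered decompositions starting with the indecomposable summands of $V_0$, yields
\[
J_W(V) = J_{J_W(V_0)}(\E_{V_0}^W(V \setminus V_0)), \qquad J_W(L) = J_{J_W(V_0)}(\E_{V_0}^W(L \setminus V_0)),
\]
and bijectivity of $\E_{V_0}^W$ ensures the two arguments are distinct summands of $\E_{V_0}^W(U \setminus V_0) \in \str J_W(V_0)$, so the inductive hypothesis applies in the smaller-rank category $J_W(V_0)$. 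If instead $V \cap L = 0$ with both nonzero, then $V \sqcup L \in \str W$, and I would establish the key identity
\[
J_W(V) \cap J_W(L) = J_W(V \sqcup L)
\]
by direct computation from the formula $J_W(M \sqcup P[1]) = (M \sqcup P)^\perp \cap \lperp{\tau_W M} \cap W$, using $(A \sqcup B)^\perp = A^\perp \cap B^\perp$ and additivity of $\tau_W$ over direct sums of modules. Combined with Theorem~\ref{thm:littlebijections}(c), which identifies $J_W(V \sqcup L) = J_{J_W(V)}(\E_V^W(L))$ as a \emph{strict} wide subcategory of $J_W(V)$ (because $\E_V^W(L) \neq 0$), the assumption $J_W(V) = J_W(L)$ yields the contradiction $J_W(V) = J_W(V) \cap J_W(L) = J_W(V \sqcup L) \subsetneq J_W(V)$.

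For (b), a morphism in $Fac[U]$ from $(W \xrightarrow{[V_1]} J_W(V_1) \xrightarrow{[V_2]} W')$ to $(W \xrightarrow{[L_1]} J_W(L_1) \xrightarrow{[L_2]} W')$ is a morphism $\phi = [X]$ in $\W(\Lambda)$, for some $X \in \str J_W(V_1)$, satisfying $\phi \circ [V_1] = [L_1]$ and $[V_2] = [L_2] \circ \phi$. Unpacking the first equation via Definition~\ref{def:category} forces $V_1 \sqcup (\E_{V_1}^W)\inv(X) = L_1$, which is solvable if and only if $V_1 \in \add(L_1)$, and in that case uniquely determines $X = \E_{V_1}^W(L_1 \setminus V_1)$. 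The remaining compatibilities $J_{J_W(V_1)}(X) = J_W(L_1)$ and $[V_2] = [L_2] \circ \phi$ then follow from the identity $\E_{L_1}^W = \E_X^{J_W(V_1)} \circ \E_{V_1}^W$, a direct consequence of the recursive formula in Theorem~\ref{thm:littlebijections}(c). The main obstacle I anticipate is verifying the key identity for (a) with careful tracking of the module and shifted-projective parts of $V$ and $L$; once that identity is in place, both parts of the lemma reduce to formal bookkeeping with the $\E$ bijections and the composition rule.
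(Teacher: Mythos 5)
Your proposal is correct, and part (b) follows essentially the same route as the paper: unpacking $\phi\circ[V_1]=[L_1]$ via the composition rule forces $V_1\in\add(L_1)$ and pins down $\phi$ uniquely as $[\E_{V_1}^W(L_1\setminus V_1)]$, after which the second triangle is checked using the recursive identity from Theorem \ref{thm:littlebijections}(c). Part (a), however, takes a genuinely different route in the key case where $V$ and $L$ share no common summands. The paper argues representation-theoretically: it evaluates $\E_L^W$ on each indecomposable summand $N$ of $V$ and exhibits, case by case (shifted projective vs.\ module, $N$ sent to a module or to a shifted Bongartz summand), a nonzero morphism $N\to\E_L^W(N)$ or $\E_L^W(N)\to\tau_W N$, concluding that $\E_L^W(V)$ lies in $J_W(L)$ but not in $J_W(V)$. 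You instead use the formal identity $J_W(V)\cap J_W(L)=J_W(V\sqcup L)$, which is indeed immediate from $(A\sqcup B)^\perp=A^\perp\cap B^\perp$ and additivity of $\tau_W$, and then kill the equality $J_W(V)=J_W(L)$ with the rank formula of Theorem \ref{thm:basicWide}(a): $J_W(V\sqcup L)$ has rank $rk(W)-rk(V)-rk(L)$, strictly smaller than that of $J_W(V)$, so the chain $J_W(V)=J_W(V)\cap J_W(L)=J_W(V\sqcup L)\subsetneq J_W(V)$ is a contradiction. Your argument is shorter and avoids both the case analysis and the Bongartz-complement computations; what the paper's version buys is an explicit Hom-theoretic witness for the failure of $\E_L^W(V)\in J_W(V)$, in the same spirit as the later computations in Theorem \ref{thm:lastmorphisms}. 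Both your reduction to the no-common-summand case (induction through $J_W(V_0)$) and the paper's terse ``without loss of generality'' rest on the same consistency of the recursive definition in Theorem \ref{thm:littlebijections}(c), so there is no gap there.
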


\begin{proof}
(a) Let $V \ncong L \in \mathsf{add}(U)$. We can assume without loss of generality that $V$ and $L$ have no common direct summands. Now $\mathcal{E}^W_{L}(V) \in \str J_W(L)$ by definition. We will show that $\mathcal{E}^W_{L}(V) \notin \str J_W(V)$.

Write $U = M\sqcup P[1]$ with $M,P \in W$ and $P$ projective. Recall from Theorem \ref{thm:littlebijections}(c) that $\mathcal{E}^W_{L} = \mathcal{E}^{J_W(P)}_{M}\circ\mathcal{E}^W_{P[1]}$ and let $N$ be an indecomposable direct summand of $V$.

If $N = Q[1]$ is not a module, then by Theorem \ref{thm:littlebijections}(b), $\mathcal{E}^W_{P[1]}(Q[1]) = Q'[1]$, where $Q'$ is a nonzero quotient of $Q$. We then have $\mathcal{E}^W_L(Q[1]) = \mathcal{E}^{J_W(P)}_{M}(Q'[1]) = B[1]$, where $B$ is a quotient of the Bongartz complement of $M$ in $J_W(P)$ and there is a nonzero map $Q'\rightarrow B$. We conclude that $\mathrm{Hom}(Q[1],\mathcal{E}^W_{L}(Q[1])) \neq 0$. Therefore $\mathcal{E}^W_L(V) \notin \str J_W(V)$ since $J_W(V) \subseteq Q^\perp$.

If $N$ is a module, then by Theorem \ref{thm:littlebijections}(b), $\mathcal{E}^W_{L}(N) = \mathcal{E}^{J_W(P)}_{M}(N)$. If this is a module, then by Theorem \ref{thm:littlebijections}(a) it is a nonzero quotient of $N$ and \linebreak $\mathrm{Hom}(N,\mathcal{E}^W_{L}(N)) \neq 0$, so we are done as before. Otherwise, Theorem \ref{thm:littlebijections}(a) says that $N \in \mathsf{Fac} M$ and there is some direct summand $B$ of the Bongartz complement of $M$ in $J_W(P)$ so that $$\mathcal{E}^{J_W(P)}_{M}(N) = (B/\mathrm{rad}(M,B))[1] = \left(\mathcal{E}^{J_W(P)}_M(B)\right)[1].$$ In particular, we observe that $B\sqcup N \notin \str J_W(P)$.
Applying Theorem \ref{thm:littlebijections}(b) once again, the fact that $B$ is a module means that $\left(\mathcal{E}^{W}_{P[1]}\right)^{-1}(B) = B$. Putting this together, we then have that $B\sqcup M \in \str W$ and $B\sqcup N \notin \str W$. Now, since $N \in \mathsf{Fac} M$, the fact that $B\sqcup M$ is support $\tau$-rigid (in $W$) implies that $\mathrm{Hom}(N,\tau_W B) = 0$. Therefore, since $B\sqcup N$ is not support $\tau$-rigid (in $W$), there exists a nonzero map $B\rightarrow\tau_{W} N$. Moreover, this map factors through $B/\mathrm{rad}(M,B)$ since $\mathrm{Hom}(M, \tau_W N) = 0$. We conclude that $\mathrm{Hom}(\mathcal{E}^W_{L}(N), \tau_W N[1]) \neq 0$. Therefore $\mathcal{E}_L^W(V) \notin \str J_W(V)$ since $J_W(V) \subseteq \lperp{(\tau_WN)}$.

(b) Let $(W\xrightarrow{[V_1]}J_W(V_1)\xrightarrow{[V_2]}W'), (W\xrightarrow{[L_1]}J_W(L_1)\xrightarrow{[L_2]}W') \in \mathsf{Faq}[U]$. Suppose there exists a morphism $[T]:J_W(V_1) \rightarrow J_W(L_1)$ in $\mathfrak{W}(\Lambda)$ such that $[L_1] = [T]\circ[V_1] = \left[V_1\sqcup \left(\mathcal{E}_{V_1}^{W}\right)\inv(T)\right]$. We see that $V_1 \in \mathsf{add}(L_1)$. Moreover, we see that $T$ is unique since $\mathcal{E}_{V_1}^{W}$ is a bijection.

Finally, it follows from (a) that $V_2 = T \sqcup \left(\mathcal{E}_T^{J_W(V_1)}\right)\inv(L_2)$. Therefore $[V_2] = [L_2]\circ[T]$ and $[T]$ defines a morphism $$(W\xrightarrow{[V_1]}J_W(V_1)\xrightarrow{[V_2]}W') \xrightarrow{[T]} (W\xrightarrow{[L_1]}J_W(L_1)\xrightarrow{[L_2]}W')$$ in $\mathsf{Faq}[U]$ as desired.
\end{proof}

\begin{cor}
\label{cor:cubeembedding}
Let $[U]:W\rightarrow W'$ be a morphism in $\mathfrak{W}(\Lambda)$. Then
\begin{enumerate}[label=\upshape(\alph*)]
	\item There is an isomorphism of categories $\mathsf{Faq}[U] \cong \mathcal{I}^{\mathsf{rk}(U)}$.
	\item The forgetful functor $\mathsf{Faq}[U] \rightarrow \mathfrak{W}(\Lambda)$ is an embedding.
\end{enumerate}
\end{cor}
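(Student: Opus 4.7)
My plan is to combine Lemma \ref{lem:cubeembedding} with the composition formula of Definition \ref{def:category} to identify $Fac[U]$ explicitly with the standard cube category on the set of indecomposable summands of $U$. The first step is to observe that an object $(W\xrightarrow{[V_1]}C\xrightarrow{[V_2]}W')$ of $Fac[U]$ is entirely determined by the first factor $V_1$: the composition formula gives $[V_2]\circ[V_1] = [V_1\sqcup (\E_{V_1}^W)\inv(V_2)]$, and equality with $[U]$ forces $V_1$ to be a direct summand of $U$, determines $V_2$ as $\E_{V_1}^W$ applied to the complementary summand, and forces $C = J_W(V_1)$. This sets up a bijection between the objects of $Fac[U]$ and the subsets of the set of $rk(U)$ indecomposable direct summands of $U$.

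For part (a), I would combine this bijection on objects with Lemma \ref{lem:cubeembedding}(b), which asserts that there is exactly one morphism from the factorization through $V_1$ to the factorization through $L_1$ when $V_1\in\add(L_1)$, and none otherwise. Translating to subsets, this says that there is exactly one morphism from the subset indexed by $V_1$ to that indexed by $L_1$ precisely when the former is contained in the latter. This is the defining structure of $\mathcal{I}^{rk(U)}$, so the identification on objects extends to an isomorphism of categories.

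For part (b), since $Fac[U] \cong \mathcal{I}^{rk(U)}$ is a poset, every hom-set has at most one element, and so faithfulness of the forgetful functor is automatic. It remains only to verify injectivity on objects, but this is precisely the content of Lemma \ref{lem:cubeembedding}(a): distinct direct summands $V\neq L$ of $U$ produce distinct Jasso categories $J_W(V)\neq J_W(L)$, hence distinct images in $\W(\Lambda)$.

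The main work has thus already been done in Lemma \ref{lem:cubeembedding}; this corollary is essentially a repackaging of its two parts in the language of the standard $k$-cube category. I do not anticipate any serious obstacle beyond carefully bookkeeping the correspondence between factorizations of $[U]$ and subsets of the indecomposable summands of $U$, and checking that both the source and the morphism of each factorization can be read off from its first factor.
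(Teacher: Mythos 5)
Your proposal is correct and follows essentially the same route as the paper: identify the objects of $Fac[U]$ with subsets of the indecomposable summands of $U$ via the first factor, use Lemma \ref{lem:cubeembedding}(b) to match morphisms with inclusions for part (a), and use Lemma \ref{lem:cubeembedding}(a) for injectivity on objects in part (b). The only cosmetic difference is that you deduce faithfulness from $Fac[U]$ being a poset (via part (a)) rather than citing Lemma \ref{lem:cubeembedding}(b) directly, which amounts to the same thing.
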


\begin{proof}
(a) Fix an ordered decomposition $(U_1,\ldots,U_n)$ of $U$. For all $X = \linebreak(W \xrightarrow{V_1} J_W(V_1)\xrightarrow{V_2}W') \in \mathsf{Faq}[U],$ define $FX$ to be the set of indices $i$ for which $U_i \in \mathsf{add}(V_1)$. By Lemma \ref{lem:cubeembedding}(a), the map $X \mapsto FX$ is a bijection between objects of $\mathsf{Faq}[U]$ and subsets of $\{1,\ldots,n\}$.

Now let $f: X\rightarrow Y$ be a morphism in $\mathsf{Faq}[U]$. It follows from Lemma \ref{lem:cubeembedding}(b) that $FX \subseteq FY$ and $f$ is the only morphism $X\rightarrow Y$. Thus define $Ff$ to be the inclusion $FX \subseteq FY$.

Since both categories have a poset structure, the composition law is preserved and $F$ defines a functor $Fac[U] \rightarrow \mathcal{I}^n$. The inverse functor is given by sending every subset $S \subseteq \{1,\ldots,n\}$ to $F\inv S := (W\xrightarrow{U_S}J_W(U_S)\xrightarrow{U'_S}W')$, where $U_S = \sqcup_{i \in S}U_i$ and $U'_S = \mathcal{E}_{U_S}^W(\sqcup_{i \notin S}U_i)$, and sending every inclusion $S \subseteq S'$ to the unique morphism $F\inv S\rightarrow F\inv S'$ in $\mathsf{Faq}[U]$.

(b) It follows from Lemma \ref{lem:cubeembedding}(a) that the forgetful functor $\mathsf{Faq}[U] \rightarrow \mathfrak{W}(\Lambda)$ is injective on objects. Likewise, it follows from Lemma \ref{lem:cubeembedding}(b) that the forgetful functor is faithful.
\end{proof}

\begin{ex}
	Consider the quiver
\begin{center}
\begin{tikzpicture}
	\node at (3,0) {1};
	\node at (2,0) {2};
	\node at (1,0) {3};
	\node at (0,0) {$Q=$};
	\begin{scope}[decoration={
	markings,
	mark=at position 1.0 with {\arrow[scale=1.3]{>}}}
	]
		\draw[postaction={decorate}] (1.75,0)--(1.25,0);
		\draw[postaction={decorate}] (2.75,0)--(2.25,0);
		\draw[postaction={decorate}] (1,-0.25)--(1,-0.5)--(3,-0.5)--(3,-0.25);
	\end{scope}
\end{tikzpicture}
\end{center}
and let $\Lambda = KQ/\mathrm{rad}^2 KQ$. Then the factorization category of the morphism $[P_1\sqcup S_1\sqcup P_3[1]] \in \mathrm{Hom}_{\mathfrak{W}(\Lambda)}(\mathsf{mod}\Lambda, 0)$ is shown in Figure \ref{fig:cube}.
\end{ex}

\begin{figure}
\begin{tikzcd}[row sep={40,between origins}, column sep={40,between origins}]
      & J(P_1) \ar[leftarrow]{rr}{[P_1]}\ar{dd}[near start]{[S_3[1]]}\ar{dl}[anchor=south east]{[P_2[1]]} & & \mathsf{mod}\Lambda \ar{dd}{[P_3[1]]}\ar{dl}{[S_1]} \\
    \mathsf{Filt}(S_1) \ar[leftarrow,crossing over]{rr}[near start,below]{[P_1]} \ar{dd}[left]{[S_3[1]]} & & J(S_1) \\
      & \mathsf{Filt}(S_2)  \ar[leftarrow]{rr}[near end]{[P_1]} \ar{dl}[near start,left]{[S_2[1]]} & &  J(P_3) \ar{dl}{[S_1]} \\
    0 \ar[leftarrow]{rr}{[P_1]} && \mathsf{Filt}(P_1) \ar[from=uu,crossing over]{}[near end,left]{[S_3[1]]}
\end{tikzcd}
\caption{There are 3! = 6 factorizations of $[P_1\sqcup S_1\sqcup P_3[1]]$ into irreducible morphisms, shown here with every square commutative. For example, $[P_1\sqcup S_1\sqcup P_3[1]] = [S_3[1]]\circ[P_2[1]]\circ[P_1]$. The three morphisms with source $\mathsf{mod}\Lambda$ are the first factors of $[P_1\sqcup S_1\sqcup P_3[1]]$ and the three morphisms with target 0 are the last factors of $[P_1\sqcup S_1\sqcup P_3[1]]$. We observe that the classifying space of the factorization category of $[P_1\sqcup S_1\sqcup P_3[1]]$ is (isometric to) a solid cube; that is, $\mathcal{B} \mathsf{Faq}[P_1\sqcup S_1\sqcup P_3[1]] = [0,1]^3$.}
\label{fig:cube}
\end{figure}

\subsection{Last Morphisms}
\label{sec:last}

Our next step is to describe the last factors of morphisms in $\mathfrak{W}(\Lambda)$. Throughout this section, we denote by $\mathsf{last}[U]$ the set of last factors corresponding to the morphism $[U]$. Moreover, if $U \cong U_1\sqcup\cdots\sqcup U_n$ is a direct sum of indecomposable objects, we denote $U_i^c := U_1\sqcup\cdots\sqcup\widehat{U_i}\sqcup\cdots\sqcup U_n$, where $\widehat{U_i}$ means we delete $U_i$ as a direct summand.

The following is immediate from the Theorem \ref{thm:bigbijection}

\begin{rem}
\label{rem:lasts}
	Let $[U]:W\rightarrow W'$ be a morphism in $\mathfrak{W}(\Lambda)$. Decompose $U \cong U_1\sqcup\cdots\sqcup U_n$ as a direct sum of indecomposable objects. Then
	$$\mathsf{last}[U] = \left\{\mathcal{E}^W_{U_i^c}(U_i)\right\}_{i=1}^n.$$
\end{rem}

 We now show that we need only consider morphisms with target $0$.

\begin{lem}
\label{lem:fewerlasts}
The following are equivalent for the category $\mathfrak{W}(\Lambda)$.
\begin{enumerate}
	\item Every morphism is determined by its last factors.
	\item Every morphism with target 0 is determined by its last factors.
\end{enumerate}
\end{lem}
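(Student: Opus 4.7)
The forward implication is immediate from restricting the first statement to morphisms with target $0$, so I focus on the converse. Suppose every morphism of $\W(\Lambda)$ with target $0$ is determined by its last factors, and let $[U_1],[U_2]\colon W\to W'$ be morphisms with $\last[U_1]=\last[U_2]$; the goal is to conclude $[U_1]=[U_2]$. If $W'=0$ the claim is immediate, so assume otherwise and choose a support $\tau$-tilting pair $V$ of $W'$, producing a morphism $[V]\colon W'\to 0$ via Theorem \ref{thm:basicWide}.

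My plan is to transfer the problem to compositions ending at $0$. Writing $[V]\circ[U_i]=[U_i\sqcup(\E_{U_i}^W)\inv(V)]$ and invoking Theorem \ref{thm:bigbijection}, the last factors of $[V]\circ[U_i]$ correspond to the indecomposable summands of the combined object, which split into those lying in $U_i$ and those lying in $(\E_{U_i}^W)\inv(V)$. The iterated composition formula $\E_{A\sqcup B}^W=\E^{J_W(B)}_{\E_B^W(A)}\circ\E_B^W$ from Theorem \ref{thm:littlebijections} shows that placing a summand of $(\E_{U_i}^W)\inv(V)$ last produces precisely a last factor of $[V]$, and so depends only on $V$; whereas placing an indecomposable summand $U_i^j$ of $U_i$ last yields an irreducible morphism that is computable purely from $V$ together with the pair $(J_W(U_i^{(j)}),\E_{U_i^{(j)}}^W(U_i^j))$, which is exactly the last factor of $[U_i]$ at $U_i^j$. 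Consequently $\last([V]\circ[U_1])=\last([V]\circ[U_2])$, and the hypothesis yields $[V]\circ[U_1]=[V]\circ[U_2]$.

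To close the argument I would set $Z:=U_1\sqcup(\E_{U_1}^W)\inv(V)=U_2\sqcup(\E_{U_2}^W)\inv(V)$ and observe that both $U_1$ and $U_2$ lie in $\add(Z)$ with $J_W(U_1)=J_W(U_2)=W'$. Lemma \ref{lem:cubeembedding}(a), applied to the morphism $[Z]\colon W\to 0$, asserts that distinct sub-direct summands of a common morphism have distinct Jasso categories; the contrapositive then forces $U_1=U_2$, hence $[U_1]=[U_2]$. The main obstacle in this plan is the bookkeeping of the second paragraph, namely showing cleanly that every last factor of the composite $[V]\circ[U_i]$ is computable from $V$ and $\last[U_i]$ alone; once that is done, Lemma \ref{lem:cubeembedding}(a) delivers the conclusion in one step.
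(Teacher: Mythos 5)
Your proof is correct and follows essentially the same route as the paper: compose with a morphism $[V]\colon W'\to 0$, observe that the last factors of the composite are determined by $V$ together with $\last[U_i]$ (the paper records this as $\{[\E_V(U_i)]\}\sqcup\last[V]=\last([V]\circ[U])$), and invoke the hypothesis for morphisms with target $0$. Your third paragraph merely makes explicit, via Lemma~\ref{lem:cubeembedding}(a), the final cancellation step that the paper compresses into ``in particular, $[U]$ is defined uniquely.''
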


\begin{proof}
Assume every morphism with target 0 is determined by its last factors. Now let $\{[U_i]:W_i\rightarrow W\}$ be a set of rank one morphisms which form the last factors of at least one morphism $[U]:W'\rightarrow W$. Now if $W = 0$, then we are done. Otherwise, choose any morphism $[V]:W'\rightarrow 0$. For each $i$, denote $V^i := \left(\mathcal{E}_{U_i}^W\right)^{-1}(V)$. Then by Remark \ref{rem:lasts},
$$\left\{\left[\mathcal{E}_{V^i}^{W_i}(U_i)\right]\right\}\sqcup\mathsf{last}[V] = \mathsf{last}\left([V]\circ[U]\right).$$
In particular, $[U]$ is defined uniquely as a consequence of Corollary \ref{cor:cubeembedding}.
\end{proof}

For the remainder of this section, we examine the relationship between sets of last factors in $\mathfrak{W}(\Lambda)$ and 2-simple minded collections. We begin with the following result relating bricks to minimal wide subcategories. Recall that in an arbitrary poset, the relation $x < y$ is minimal if $x \lneq z \leq y$ implies $z = y$.

\begin{thm}\label{thm:minIncl}\cite[Thm. 3.3]{demonet_lattice}
There is a bijection
$$\mathsf{brick}\Lambda\rightarrow\{W\in\mathsf{wide}\Lambda|0\subset W\textnormal{ is a minimal inclusion}\}$$
given by $S \mapsto \mathsf{Filt} S$. Moreover, we have $\mathsf{brick}(\mathsf{Filt} S) = \{S\}$.
\end{thm}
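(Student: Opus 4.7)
The plan is to prove both that $S \mapsto \Filt S$ lands in minimal nonzero wide subcategories with $\brick(\Filt S) = \{S\}$, and that every minimal nonzero wide subcategory arises this way.

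First I would show $\Filt S$ is wide for any brick $S$. Closure under extensions is built into the definition, so the content is closure under kernels and cokernels; I would argue this by induction on the total $\Filt S$-length of source and target. The base case, morphisms between direct sums of copies of $S$, is handled by the Schur property: since $\End(S)$ is a division ring, such a morphism is a matrix over $\End(S)$ with kernel and cokernel in $\add S$. The inductive step uses a short exact sequence $0 \to M' \to M \to S \to 0$ together with the snake lemma to fit $\ker f$ and $\coker f$ into exact sequences whose other terms lie in $\Filt S$ by induction; extension-closure of $\Filt S$ then completes the argument. Once $\Filt S$ is wide, it is an abelian category in which every object has $S$ as a composition factor, so $S$ is the unique simple object. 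The equality $\brick(\Filt S) = \{S\}$ then follows from a socle-top argument: any $T \in \Filt S$ of $\Filt S$-length at least two admits the nonzero non-invertible endomorphism $T \twoheadrightarrow S \hookrightarrow T$ (project to a summand of the top, then include a summand of the socle), contradicting the brick condition on $T$.

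For the remaining claims I would use the key observation that if $W$ is any wide subcategory containing a nonzero $M \in \Filt S$, then $S \in W$: the top-to-socle endomorphism above has image isomorphic to $S$, and wide subcategories contain images since $\image \sigma = \ker(\coker \sigma)$. This gives minimality of $\Filt S$: any nonzero wide $W \subseteq \Filt S$ contains $S$, and hence all of $\Filt S$ by extension-closure, so $W = \Filt S$. Conversely, given a minimal nonzero wide $W$, pick any simple object $T$ of the abelian category $W$; by Schur's lemma $T$ is a brick, so $\Filt T \subseteq W$ is nonzero and wide, and minimality forces $W = \Filt T$. Injectivity of $S \mapsto \Filt S$ follows from $\brick(\Filt S) = \{S\}$. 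The main obstacle is the inductive proof that $\Filt S$ is closed under kernels and cokernels; everything else is formal once $\Filt S$ is known to be abelian with unique simple object $S$.
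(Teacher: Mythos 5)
The paper offers no proof of this statement: it is quoted directly from \cite[Thm.~3.3]{demonet_lattice} (the underlying fact that $\Filt$ of a semibrick is wide goes back to Ringel), so there is nothing internal to compare your argument against. On its own merits your proof is the standard one and its architecture is sound: Schur's lemma handles morphisms inside $\add S$; once $\Filt S$ is known to be abelian with $S$ as its unique simple object, the non-invertible endomorphism $T\twoheadrightarrow S\hookrightarrow T$ of any object of length at least two gives $\brick(\Filt S)=\{S\}$; taking the image of that same endomorphism (an iterated kernel/cokernel) shows that any wide subcategory meeting $\Filt S$ nontrivially contains $S$, which yields both minimality of $\Filt S$ and, combined with picking a simple object of an arbitrary minimal wide $W$, surjectivity of $S\mapsto\Filt S$; injectivity then falls out of $\brick(\Filt S)=\{S\}$.

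The one step you should not wave at is the one you yourself flag as the main obstacle. As written, the snake-lemma step does not quite parse: a morphism $f\colon M\to N$ together with a short exact sequence $0\to M'\to M\to S\to 0$ does not by itself produce a map of short exact sequences, since $f$ need not carry $M'$ into any chosen subobject of $N$, so there is no second row to snake against. The standard repair is to first establish the two special cases by induction on length: every nonzero $f\colon M\to S$ with $M\in\Filt S$ is an epimorphism with kernel in $\Filt S$, and dually every nonzero $S\to N$ is a monomorphism with cokernel in $\Filt S$; the induction splits according to whether $f$ restricted to a simple subobject (resp.\ composed with a projection onto a simple quotient) is zero or, by Schur, an isomorphism, the latter case splitting off a direct summand $S$. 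A general $f\colon M\to N$ is then treated by a second induction, composing with a top quotient $N\twoheadrightarrow S$ and chasing the resulting exact sequences. With that step fleshed out, your proof is complete and proves exactly the cited statement.
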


\begin{lem}
\label{lem:lastbricks}
Let $S$ be a brick in $\mathsf{mod}\Lambda$ and let $W = \mathsf{Filt} S$. Then $\stt W = \{P_S, P_S[1]\}$ for some $P_S \in W$. Moreover, there is a bijection
$$\mathsf{br}:\mathsf{brick}\Lambda \rightarrow \{P \in \mathsf{mod}\Lambda|\exists W \in \mathsf{wide}\Lambda: \stt W = \{P,P[1]\}\}$$ given by $\mathsf{br}(S) = P_S$ and $\mathsf{br}^{-1}(P) = P/\mathrm{rad}(P,P)$. In particular, the wide subcategory $W$ is uniquely determined by either $S$ or $P_S$.
\end{lem}

\begin{proof}
Let $S \in \mathsf{brick}\Lambda$ and recall from Theorem \ref{thm:basicWide} that $\mathsf{Filt} S$ is equivalent to the (small) module category of a $\tau$-tilting finite algebra. Moreover, Theorem \ref{thm:minIncl} says that $\mathsf{Filt} S$ contains a single brick. In particular, $\mathsf{Filt} S$ contains a unique simple object (namely $S$) and a unique indecomposable projective object, which we denote~$P_S$. In particular, we have $\{P_S,P_S[1]\} \subseteq \stt(\mathsf{Filt} S)$. Moreover, $0 \in \str(\mathsf{Filt} S)$ is almost complete, and so $\stt(\mathsf{Filt} S) = \{P_S,P_S[1]\}$ by Theorem \ref{thm:bongartz}(b). Applying the brick-$\tau$-rigid correspondence of Demonet-Iyama-Jasso (see \cite[Thm. 4.2]{demonet_tilting}), we then have that $P_S/\mathrm{rad}(P_S,P_S) \cong S$; i.e., that $\mathsf{br}^{-1}\circ \mathsf{br}(S) = S$.

Conversely, let $P \in \mathsf{mod}\Lambda$ and suppose there exists a wide subcategory $W \in \mathsf{wide}\Lambda$ so that $\stt W = \{P,P[1]\}$. Once again, Theorem \ref{thm:basicWide} lets us apply the brick-$\tau$-rigid correspondence of Demonet-Iyama-Jasso, and we conclude that $\mathsf{brick} W =  \{P/\mathrm{rad}(P,P)\}$. In particular, $P$ is indecomposable, $P/\mathrm{rad}(P,P)$ is simple in $W$, and $W = \mathsf{Filt}(P/\mathrm{rad}(P,P))$. We conclude that $\mathsf{br}\circ \mathsf{br}^{-1}(P) = P$.
\end{proof}

Now recall that if $L$ is a lattice and $v,v' \in L$ with $v<v'$, then the intervals $[v,v']$ and $(v,v')$ are the sets $\{v''\in L|v\leq v''\leq v\}$ and $\{v''\in L|v<v''<v\}$. The \emph{Hasse quiver} of $L$ is then the quiver with vertex set $L$ and an arrow $v\rightarrow v'$ if $v'<v$ and $(v',v) = \emptyset$ (i.e., there is an arrow $v \rightarrow v'$ if $v' < v$ is a minimal relation). We remark that in our context, the Hasse quivers $\mathsf{Hasse}(\stt\Lambda),\mathsf{Hasse}(\smc\Lambda),$ and $\mathsf{Hasse}(\mathsf{tors}\Lambda)$ are the same up to relabeling the vertices. See \cite[Cor.~4.3, Thm.~4.9]{brustle_ordered}.

\emph{Brick labeling} is a way of assigning a brick to each arrow of $\mathsf{Hasse}(\stt\Lambda)$ (and the other equivalent Hasse quivers) which generalizes the labeling by \emph{$c$-vectors} in the hereditary case (in fact, the representation-theoretic $c$-vectors of \cite{fu} are explicitly related to brick labels in \cite{treffinger_sign}). This labeling has been independently defined by Demonet, Iyama, Reading, Reiten, and Thomas \cite{demonet_lattice}; Barnard, Carroll and Zhu \cite{barnard_minimal}; Asai \cite{asai_semibricks}; and Br\"ustle, Smith, and Treffinger \cite{brustle_wall}. We will discuss brick labeling in more detail in section \ref{sec:pi1}, but in this section we use only the following facts:

\begin{defthm}\
\smallskip
\begin{enumerate}[label=\upshape(\alph*)]
\item \cite[Def. 2.14]{asai_semibricks} Let $U \rightarrow V$ be an arrow in $\mathsf{Hasse}(\stt\Lambda)$. Suppose $U = M\sqcup M'\sqcup P[1]$ with $M$ indecomposable and $V$ a (left) mutation of $U$ at $M$. Then the \emph{brick label} assigned to this arrow is the brick $M/\mathrm{rad}(M\sqcup M',M)$.
\item \cite[Thm. 3.12]{asai_semibricks} Let $U \in \stt\Lambda$. Let $\mathsf{Out}(U)$ be the set of bricks labeling arrows $U\rightarrow V$ for some $V$ in $\mathsf{Hasse}(\stt\Lambda)$ and let $\mathsf{In}(U)$ be the set of bricks labeling arrows $V'\rightarrow U$ for some $V'$ in $\mathsf{Hasse}(\stt\Lambda)$. Then
$$\mathcal{X(U)}:= \{B|B\in \mathsf{Out}(U)\}\sqcup \{B[1]|B\in \mathsf{In}(U)\}$$
is a 2-simple minded collection. Moreover, every 2-simple minded collection occurs in this way.
\end{enumerate}
\end{defthm}

This leads to the main theorem of this section.

\begin{thm}
\label{thm:lastmorphisms}
Let $U$ be a support $\tau$-tilting pair for $\Lambda$. Then $\mathsf{br}\inv(\mathsf{last}[U]) = \mathcal{X}(U)$, where we define $\mathsf{br}\inv(P[1]) := \mathsf{br}\inv(P)[1]$. In particular, $\mathsf{br}\inv\circ\mathsf{last}$ defines a bijection $\stt\Lambda \rightarrow \smc\Lambda$.
\end{thm}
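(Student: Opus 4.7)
The plan is to parameterize both $\br\inv(\last[U])$ and $\X(U)$ by the $n = rk(\Lambda)$ indecomposable summands of $U$ and then match the associated elements. For the left side, any last factor of $[U]\colon \mods\Lambda \to 0$ is an irreducible morphism into $0$, and by Lemma \ref{lem:lastbricks} such a morphism must be of the form $[P_S]$ or $[P_S[1]]$ for some brick $S$; hence $\br\inv(\last[U]) \subset \brick\Lambda \sqcup \brick\Lambda[1]$. Using Theorem \ref{thm:bigbijection} to match factorizations of $[U]$ with ordered decompositions of $U$, and combining with the rigidity of $Fac[U] \cong \mathcal{I}^n$ from Corollary \ref{cor:cubeembedding}, the last morphism in such a factorization is $[\E_{U'}(V)] \colon J(U') \to 0$, where $U = V \sqcup U'$, and so depends only on which summand $V$ is placed last. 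Lemma \ref{lem:cubeembedding}(a) forces the sources $J(U')$ to be distinct for distinct summands, yielding a bijection $V \mapsto \ell_V$ between the $n$ indecomposable summands of $U$ and $\last[U]$.

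The main step is to compute $\ell_V$ explicitly via iteration of the $\E$-formulas in Theorem \ref{thm:littlebijections}, splitting according to the dichotomy from the mutation theorem in Section \ref{sec:tilting}: writing $U = V \sqcup U'$ with $U' = M' \sqcup P'[1]$ the module and shifted-projective parts, either $V$ is the Bongartz-type completion of $U'$, or $V \in \Fac M' \cup \proj\Lambda[1]$. In the Bongartz case, $V = M_i$ is a module with $M_i \notin \Fac M'$; peeling off $P'[1]$ first (Theorem \ref{thm:littlebijections}(b) fixes modules) and then $M'$ via the ``not in $\Fac$'' branch of Theorem \ref{thm:littlebijections}(a) yields $\ell_V = [M_i/\rad(M', M_i)]$, the indecomposable projective of the rank-one Jasso category $J(U')$. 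Passing through $\br\inv$ returns the brick $M_i/\rad(M_i \sqcup M', M_i)$---precisely the brick label of the arrow $U \to V_U'$ in $\Hasse(\stt\Lambda)$ corresponding to left mutation at $M_i$, giving an element of $\Out(U)$. In the non-Bongartz case (either $V$ a shifted projective or $V \in \Fac M'$ a module), the shifted-projective branches of Theorem \ref{thm:littlebijections} produce $\ell_V = [B/\rad(\cdot, B)[1]]$ for a summand $B$ of a Bongartz complement in a Jasso subcategory; this is the indecomposable shifted projective of some rank-one $\Filt T$, so $\br\inv(\ell_V) = T[1]$. Relating the Bongartz complement appearing in the $\E$-formula to the Bongartz complement of $U'$ in $\mods\Lambda$---which by the mutation theorem is the unique indecomposable module replacing $V$ in the element $U'' > U$ covering $U$ in $\Hasse(\stt\Lambda)$---one identifies $T$ as the brick label of the arrow $U'' \to U$, so $T[1] \in \In(U)[1]$.

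Assembling the two cases, $V \mapsto \br\inv(\ell_V)$ sends Bongartz-type summands bijectively to $\Out(U)$ and the remaining summands bijectively to $\In(U)[1]$, proving $\br\inv(\last[U]) = \Out(U) \sqcup \In(U)[1] = \X(U)$. The ``in particular'' clause then follows from the assertion in the brick-labeling Definition--Theorem that every 2-simple minded collection arises as $\X(U)$ for some $U \in \stt\Lambda$, combined with injectivity of $U \mapsto \X(U)$ via the lattice isomorphism $\smc\Lambda \cong \tors\Lambda$. I expect the main obstacle to be the non-Bongartz case above, where the brick $T$ must be identified with the brick label of the upward arrow $U'' \to U$; this relies on the compatibility of Bongartz complements with passage to Jasso subcategories, a standard but delicate check.
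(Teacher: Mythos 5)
Your proposal is correct and follows essentially the same route as the paper's proof: both use Theorem \ref{thm:bigbijection} to identify the last factor attached to a summand $V$ of $U$ with $[\E_{U'}(V)]$ for $U'$ the complementary almost complete pair, compute this image case-by-case via Theorem \ref{thm:littlebijections} (your Bongartz/non-Bongartz dichotomy is just the paper's three cases regrouped), and match the result with the brick label of the corresponding mutation arrow using that $J(U')$ has rank one. The step you flag as delicate --- identifying the Bongartz complement in the Jasso category with the module replacing $V$ in the covering element --- is handled in the paper exactly as you suggest, by noting $M_i^c$ is almost complete in $J(P)$.
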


\begin{proof}
Decompose $U = M_1\sqcup\cdots\sqcup M_m\sqcup P_{m+1}[1]\sqcup\cdots\sqcup P_n[1]$ into a direct sum of indecomposable objects.
Now recall from Remark \ref{rem:lasts} that

$$\mathsf{last}[U] = \left\{\left[\mathcal{E}_{M_i^c\sqcup P[1]}(M_i)\right]\right\}\sqcup\left\{\left[\mathcal{E}_{M\sqcup P_j^c[1]}(P_j[1])\right]\right\}.$$

We will show that these morphisms correspond precisely to the brick labels of $U$. First consider $M_i$ a direct summand of $M$. There are then two subcases to consider.

Case 1: Assume $M_i \notin \mathsf{Fac} M_i^c$. Thus $M\sqcup P[1]$ has a left mutation at $M_i$ with label
$$S = M_i/\mathrm{rad}(M,M_i).$$
On the other hand, we have
\begin{eqnarray*}
\mathcal{E}_{M_i^c\sqcup P[1]}(M_i) &=& \mathcal{E}^{J(P)}_{\mathcal{E}_{P[1]}(M_i^c)}\mathcal{E}_{P[1]}(M_i)\\
&=&\mathcal{E}^{J(P)}_{M_i^c}(M_i)\\
&=& M_i/\mathrm{rad}(M_i^c\,M_i)
\end{eqnarray*}
by Theorem \ref{thm:littlebijections}. We observe that $S, \mathcal{E}_{M_i^c\sqcup P[1]}(M_i) \in J(M_i^c\sqcup P[1])$, which has rank one. Thus as $\mathcal{E}_{M_i^c\sqcup P[1]}(M_i)$ is a module, it follows immediately that $\mathcal{E}_{M_i^c\sqcup P[1]}(M_i) = \mathsf{br}(S)$.

Case 2: Assume $M_i \in \mathsf{Fac} M_i^c$. It follows that $\mathsf{Fac} M = \mathsf{Fac} M_i^c$, meaning there exists an indecomposable $M_i'$ such that $M_i^c\sqcup M_i'\sqcup P[1]$ is the right mutation of $M\sqcup P[1]$ at $M_i$, which has label
$$S[1] = \left(M_i'/\mathrm{rad}(M_i^c\sqcup M_i',M_i')\right)[1].$$
On the other hand, by Theorem \ref{thm:littlebijections},
\begin{eqnarray*}
\mathcal{E}_{M_i^c\sqcup P[1]}(M_i) &=& \mathcal{E}^{J(P)}_{\mathcal{E}_{P[1]}(M_i^c)}\mathcal{E}_{P[1]}(M_i)\\
&=&\mathcal{E}^{J(P)}_{M_i^c}(M_i)\\
&=&\left(B_i/\mathrm{rad}(M_i^c,B_i)\right)[1]
\end{eqnarray*}
where $B_i$ is a direct summand of the Bongartz completion of $M_i^c$ in $J(P)$. Now, $M_i^c$ is almost complete support $\tau$-rigid in $J(P)$, which means $B_i$ is precisely the Bongartz complement of $M_i^c$ in $J(P)$. In particular, this means $B_i = M_i'$. We conclude that
$$\mathcal{E}_{M_i^c\sqcup P[1]}(M_i)=\left(M_i'/\mathrm{rad}(M_i^c,M_i')\right)[1].$$
As in Case 1, we observe that $S$ is a brick in $J(M_i^c\sqcup P[1])$ and hence $\mathcal{E}_{
M_i^c\sqcup P[1]}(M_i) = \mathsf{br}(S[1])$.

Now consider $P_j$ a direct summand of $P$. Then there exists an indecomposable $M_j'$ such that $M\sqcup M_j'\sqcup P_j^c[1]$ is the right mutation of $M\sqcup P[1]$ at $P_j[1]$, which has label
$$S[1] = \left(M_j'/\mathrm{rad} (M\sqcup M_j',M_j')\right)[1].$$
On the other hand, by Theorem \ref{thm:littlebijections},
\begin{eqnarray*}
\mathcal{E}_{M\sqcup P_j^c[1]}(P_j[1]) &=& \mathcal{E}^{J(P_j^c)}_{\mathcal{E}_{P_j^c[1]}(M)}\mathcal{E}_{P_j^c[1]}(P_j[1])\\
&=& \mathcal{E}^{J(P_j^c)}_M \left(P_j/\mathrm{rad}(P_j^c,P_j)[1]\right)\\
&=& B_j/\mathrm{rad}(M,B_j)[1]
\end{eqnarray*}
where $B_j$ is a direct summand of the Bongartz completion of $M$ in $J(P_j^c)$. As above, we conclude that $B_j = M_j'$ and hence
$$\mathcal{E}_{M\sqcup P_j^c[1]}(P_j[1]) = \left(M_j'/\mathrm{rad}(M,M_j')\right)[1].$$
As in the previous cases, we observe that $S$ is a brick in $J(M\sqcup P_j^c)$ and hence $\mathcal{E}_{M\sqcup P_j^c[1]}(P_j[1]) = \mathsf{br}(S[1])$.
\end{proof}

As a corollary, we conclude our first main theorem (Theorem A in the introduction).

\begin{thm}
\label{thm:cubical}
The category $\mathfrak{W}(\Lambda)$ is cubical.
\end{thm}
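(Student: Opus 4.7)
The strategy is to check each of the five conditions (a)--(e) of Definition \ref{def:cubical}, most of which have already been quietly assembled by the preceding machinery.

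For (a) I would set $rk([U]) := rk(U)$, the number of indecomposable direct summands of $U$. Additivity $rk([V]\circ[U]) = rk([U]) + rk([V])$ is immediate from the formula $[V]\circ[U] = [U\sqcup (\E_U^{W_1})\inv(V)]$ in Definition \ref{def:category}, since $(\E_U^{W_1})\inv$ preserves the number of indecomposable summands. Conditions (b) and (c) are exactly the content of Corollary \ref{cor:cubeembedding}(a) and (b), respectively, so no further work is needed there.

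For (d), by Theorem \ref{thm:bigbijection} the first factors of $[U]:W\to W'$ are in bijection with the indecomposable direct summands $U_1,\dots,U_k$ of $U$: the first factor corresponding to $U_i$ is $[U_i]:W\to J_W(U_i)$. Since $[U]$ is determined by (the isomorphism class of) $U$, and $U = U_1\sqcup\cdots\sqcup U_k$, recovering the $U_i$ from the first factors recovers $[U]$.

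The real content is in (e). By Lemma \ref{lem:fewerlasts}, it suffices to check that any morphism $[U]:W\to 0$ is determined by $\last[U]$. Such a morphism corresponds to a support $\tau$-tilting pair $U \in \stt W$ (since the condition $J_W(U) = 0$ is equivalent to $U$ being support $\tau$-tilting in $W$). Theorem \ref{thm:lastmorphisms}, applied to the wide subcategory $W$ in place of $\mods\Lambda$, gives that $\br\inv\circ\last$ is a bijection $\stt W \rightarrow \smc W$; in particular $\last$ is injective on $\stt W$, so $[U]$ is determined by its last factors. This is the one step requiring real input beyond the corollary in the previous subsection, and it is precisely what Theorem \ref{thm:lastmorphisms} was set up to provide. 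Assembling (a)--(e) then gives the theorem.
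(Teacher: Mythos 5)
Your proposal is correct and follows essentially the same route as the paper: the rank function $rk[U]:=rk(U)$, Corollary \ref{cor:cubeembedding} for conditions (b) and (c), Theorem \ref{thm:bigbijection} for first factors, and Theorem \ref{thm:lastmorphisms} for last factors. You are in fact slightly more careful than the paper's own write-up in spelling out the reduction via Lemma \ref{lem:fewerlasts} and the need to apply Theorem \ref{thm:lastmorphisms} to $W\cong\mods\Lambda'$ rather than just to $\mods\Lambda$, which is exactly how the argument is meant to be assembled.
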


\begin{proof}
	We use the rank function $\mathsf{rk}[U] := \mathsf{rk}(U)$. By definition, $\mathsf{rk}[U] + \mathsf{rk}[V] = \mathsf{rk}([U]\circ[V])$ when $[U]$ and $[V]$ are composable. Conditions (b) and (c) in the definition of a cubical category are shown in Corollary \ref{cor:cubeembedding}.
	
	Now let $[U]: W \rightarrow W'$ be a morphism in $\mathfrak{W}(\Lambda)$. By Theorem \ref{thm:bigbijection}, the first factors of $[U]$ are the morphisms $[U_i]:W \rightarrow J_W(U_i)$ corresponding to the indecomposable direct summands of $U$. These determine $U$ uniquely. Thus we need only show the last factors of $[U]$ determine $[U]$. By Lemma \ref{lem:fewerlasts}, we can assume $W' = 0$. Theorem \ref{thm:lastmorphisms} then implies that the last factors of $[U]$ are the morphisms $[U_i]:W_i \rightarrow 0$ for which $\bigsqcup_i \mathsf{br}\inv(U_i)$ is the 2-simple minded collection bijectively determined by $U$. Again, these determine $U$ uniquely. We conclude that the category $\mathfrak{W}(\Lambda)$ is cubical.
\end{proof}

\subsection{Locally CAT(0) Categories}
\label{sec:cat0}

We now wish to examine the topological properties of the category $\mathfrak{W}(\Lambda)$. In particular, we study its \emph{classifying space} $\mathcal{B}\mathfrak{W}(\Lambda)$. The classifying space $\mathcal{B}\mathcal{C}$ of an arbitrary category $\mathcal{C}$ is the geometric realization of a simplicial set known as the \emph{simplicial nerve} of the category. The 0-simplices correspond to the objects of $\mathcal{C}$ and the nondegenerate $k$-simplices correspond to chains of composable nonidentity morphisms $(X_0\xrightarrow{f_1} X_1 \xrightarrow{f_2}\cdots\xrightarrow{f_k} X_k)$ in $\mathcal{C}$. A more rigorous definition can be found in \cite[Sec. 3.4]{igusa_signed}. Following \cite{igusa_signed} in the hereditary case, we often refer to $\mathcal{B}\mathfrak{W}(\Lambda)$ as the \emph{picture space} of $\Lambda$.

In general, a connected topological space $X$ is called a $K(\pi,1)$ if it satisfies any of the following equivalent conditions:
\begin{enumerate}
	\item The homotopy groups of $X$ above degree one are all trivial.
	\item The universal cover of $X$ is contractible.
	\item The cohomology of $X$ with arbitrary coefficients is isomorphic to the cohomology of its fundamental group.
\end{enumerate}
Such spaces form a subclass of \emph{Eilenberg MacLane spaces} (see e.g. \cite[Sec. 1.B, x4.2]{hatcher_algebraic}). There is a general recipe for building a $K(\pi,1)$ for an arbitrary finitely presented group $G$ (meaning a $K(\pi,1)$ with fundamental group $G$), but in general the result is an infinite-dimensional CW-complex. Therefore, it is interesting to consider both whether a group admits a finite-dimensional $K(\pi,1)$ and whether a finite CW-complex is a $K(\pi,1)$ for its fundamental group.

An important example of $K(\pi,1)$'s are spaces which are \emph{locally $\mathrm{CAT}(0)$}, also referred to as \emph{non-positively curved}. Informally, a $\mathrm{CAT}(0)$ space is a metric space whose geodesic triangles are ``no fatter than Euclidean triangles", and a locally $\mathrm{CAT}(0)$ space is a space with a $\mathrm{CAT}(0)$ universal cover. Such spaces often admit well-behaved group actions by isometries, and are a frequent object of study in geometric group theory. In particular, in the celebrated work \cite{gromov_hyperbolic}, Gromov shows that for spaces known as \emph{cube complexes}, being locally $\mathrm{CAT}(0)$ is equivalent to a local combinatorial condition. Informally, a cube complex is a topological space built from cubes (of varying dimensions) in an analogous way to how a simplicial complex is built from simplices. We refer readers to the first two chapters of \cite{wise_riches} for more details about this story.

Returning to our context, the definition of a cubical category is made so that the classifying space of a cubical category is a cube complex. In previous work, the second author then deduced the following by interpreting the results of Gromov categorically.

\begin{prop}\cite[Prop. 3.4, Prop. 3.7]{igusa_category}
\label{prop:npc}
Suppose $\mathcal{C}$ is a cubical category. Then the following additional properties are sufficient for the classifying space $\mathcal{BC}$ to be locally $\mathrm{CAT}(0)$ and thus a $K(\pi,1)$.
\begin{enumerate}[label=\upshape(\alph*)]
\item A set of $k$ rank 1 morphisms $\{f_i:X \rightarrow Y_i\}$ forms the first factors of a rank $k$ morphism if and only if each pair $\{f_i, f_j\}$ forms the set of first factors of a rank 2 morphism. In other words, first factors are given by pairwise compatibility conditions.
\item  A set of $k$ rank 1 morphisms $\{f_i:X_i \rightarrow Y\}$ forms the last factors of a rank $k$ morphism if and only if each pair $\{f_i, f_j\}$ forms the set of last factors of a rank 2 morphism. In other words, last factors are given by pairwise compatibility conditions.
\item There is a \emph{faithful group functor} $g:\mathcal{C}\rightarrow G$ for some group $G$; that is, a faithful functor $g:\mathcal{C}\rightarrow G$, where $G$ is considered as a groupoid with one object.
\end{enumerate}
\end{prop}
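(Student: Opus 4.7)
The plan is to equip $\mathcal{BC}$ with a cube complex structure, verify Gromov's link condition using (a) and (b), and then invoke the Cartan--Hadamard theorem. The cube complex comes directly from the cubical category axioms: each morphism $f: A \to B$ of rank $k$ has factorization category $Fac(f) \cong \mathcal{I}^k$, and the embedding $Fac(f) \hookrightarrow \Cat$ from Definition \ref{def:cubical}(c) realizes $[0,1]^k = \mathcal{B}(\mathcal{I}^k)$ as an embedded cube in $\mathcal{BC}$. Definition \ref{def:cubical}(d),(e) ensure that cubes sharing a common subfactorization are glued along faces consistently, so that $\mathcal{BC}$ inherits an honest combinatorial cube complex structure.

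The heart of the proof is verifying Gromov's criterion: a Euclidean cube complex is locally $\CAT$ if and only if the link of every vertex is a flag simplicial complex. At an object $X \in \Cat$, the vertices of the link are rank-$1$ morphisms incident to $X$, which split into outgoing morphisms $[g_i]: X \to Y_i$ and incoming morphisms $[f_j]: Z_j \to X$. I would establish the flag condition by a case analysis on mixed versus pure configurations. A mixed pair $([f],[g])$ automatically spans a square: the composite $[g]\circ[f]$ is a rank-$2$ morphism whose factorization cube $\mathcal{I}^2$ has $X$ as an intermediate vertex with edges $[f]$ and $[g]$. By Definition \ref{def:cubical}, a pair of outgoing (resp.\ incoming) rank-$1$ morphisms spans a square precisely when they form the first (resp.\ last) factors of a rank-$2$ morphism.

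Now suppose we are given $q$ outgoing and $p$ incoming pairwise-compatible rank-$1$ morphisms at $X$. By hypothesis (a) the outgoing edges are the first factors of a single rank-$q$ morphism $[V]: X \to B$, and by hypothesis (b) the incoming edges are the last factors of a single rank-$p$ morphism $[U]: A \to X$. The composition $[V]\circ[U]: A \to B$ has rank $p+q$, and its factorization cube $Fac([V]\circ[U]) \cong \mathcal{I}^{p+q}$ contains $X$ as the interior vertex corresponding to the chosen factorization, with precisely $p$ incoming and $q$ outgoing rank-$1$ edges. Hence the prescribed edges span a $(p+q)$-cube containing $X$, which is the required simplex in the link. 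The flag condition follows.

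With local $\CAT$ established, the Cartan--Hadamard theorem for nonpositively curved Euclidean cube complexes gives that the universal cover of $\mathcal{BC}$ is globally $\CAT$ and hence contractible, so $\mathcal{BC}$ is aspherical, i.e.\ a $K(\pi,1)$. Condition (c) enters as a non-degeneracy hypothesis: the faithful functor $g: \Cat \to G$ distinguishes distinct morphisms via their images in $G$, which rules out degenerate identifications (for instance, cubes glued to themselves along proper faces) that would otherwise violate the strict cube-complex axioms and spoil Gromov's criterion. The main obstacle I anticipate is the careful bookkeeping in the link argument: one must verify that the $p+q$ rank-$1$ edges at $X$ within $Fac([V]\circ[U])$ are \emph{exactly} the prescribed incoming and outgoing morphisms, rather than merely $p+q$ edges in the right directions. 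This requires tracking the factorization structure through the composition $[V]\circ[U]$ and invoking the uniqueness from Definition \ref{def:cubical}(d),(e) to identify the subcubes.
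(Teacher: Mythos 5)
This proposition is quoted from \cite[Prop.~3.4, Prop.~3.7]{igusa_category} and the present paper supplies no proof of its own, so the comparison is with the cited source: your outline --- realizing $\mathcal{BC}$ as a cube complex built from the factorization cubes, verifying Gromov's flag link condition by splitting the link of a vertex $X$ into incoming and outgoing parts (mixed pairs spanning squares automatically via the rank-$2$ composite, pure pairs handled by hypotheses (a) and (b), and a pairwise-compatible set of $p$ incoming and $q$ outgoing edges spanning the corner of $Fac([V]\circ[U])$ at $X$), then concluding asphericity by Cartan--Hadamard --- is exactly the strategy of the cited proof, and your anticipated bookkeeping does close up, since by Definition \ref{def:cubical}(d),(e) the incoming and outgoing edges at the vertex $X$ of $Fac([V]\circ[U])$ are precisely the last factors of $[U]$ and first factors of $[V]$, so corners are determined by their vertex sets and the link is a flag simplicial complex. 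The one soft spot is your treatment of (c), whose role in preventing self-identifications (so that the link is an honest simplicial complex and Gromov's criterion applies) is asserted rather than verified, but this matches how the faithful group functor is deployed in \cite{igusa_category}.
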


It is shown in \cite{igusa_category} that $\mathfrak{W}(KQ)$ satisfies the hypotheses of Proposition \ref{prop:npc} when $Q$ is the quiver $A_n$ with straight orientation. For a general $\mathfrak{W}(\Lambda)$, condition (a) follows directly from the definition of a $\tau$-rigid pair. Conditions (a) and (b) are shown in \cite{igusa_signed} in the case that $\Lambda$ is hereditary. In Section \ref{sec:nakayama}, we show that condition (b) also holds in the case that $\Lambda$ is Nakayama. In the Nakayama case, we also show that condition (c) holds in Section \ref{sec:pi1}. We end this section with the following lemma, similar to Lemma \ref{lem:fewerlasts}.

\begin{lem}
\label{lem:fewerlasts2}
The following are equivalent for the category $\mathfrak{W}(\Lambda)$.
\begin{enumerate}
	\item The last factors of all morphisms are given by pairwise compatibility conditions.
	\item The last factors of all morphisms with target 0 are given by pairwise compatibility conditions.
	\item The completable semibrick pairs for $\Lambda$ are given by pairwise compatibility conditions.
\end{enumerate}
\end{lem}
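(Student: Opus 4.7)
The implication that (i) implies (ii) is immediate, so I focus on the converse. My plan is to reduce the general case to the target-$0$ case by composing with a morphism $[V] : W \to 0$, mirroring the strategy of Lemma~\ref{lem:fewerlasts}. Let $\{[U_i] : W_i \to W\}_{i \in I}$ be a pairwise compatible set of rank 1 morphisms into $W$, meaning each pair $\{[U_i], [U_j]\}$ is the set of last factors of some rank 2 morphism into $W$. The case $W = 0$ is exactly the hypothesis, so I assume $W \neq 0$ and pick any $[V] : W \to 0$ (such a morphism exists because $W$ is equivalent to a module category by Theorem~\ref{thm:basicWide} and therefore admits an $s\tau$-tilting object). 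Write $\last[V] = \{[V_\ell]\}_{\ell \in L}$. For each $i \in I$, the composition $[V] \circ [U_i] = [U_i \sqcup (\E_{U_i}^{W_i})^{-1}(V)]$ has rank $1 + rk[V]$, and unpacking Theorem~\ref{thm:bigbijection} shows its last factors consist of $\last[V]$ together with one additional rank 1 morphism, which I denote $\phi_V([U_i])$ and which corresponds to the indecomposable summand $U_i$ of the ordered decomposition.

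The next step is to show that $S := \{\phi_V([U_i])\}_{i \in I} \sqcup \last[V]$ is pairwise compatible as a set of potential last factors of morphisms with target $0$. Pairs $\{\phi_V([U_i]), \phi_V([U_j])\}$ arise as last factors of the rank 2 sub-factor of $[V] \circ [U_{ij}]$ selected inside the cube $Fac([V] \circ [U_{ij}]) \cong \mathcal{I}^{2+|L|}$, where $[U_{ij}]$ is the rank 2 morphism into $W$ witnessing compatibility of $[U_i]$ and $[U_j]$. Pairs drawn entirely from $\last[V]$ arise from sub-factors of $[V]$, while mixed pairs $\{\phi_V([U_i]), [V_\ell]\}$ arise from sub-factors of $[V] \circ [U_i]$. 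In every case, Corollary~\ref{cor:cubeembedding} supplies the required rank 2 sub-factor. By hypothesis~(ii) applied to $S$, there is then a morphism $[T] : W'' \to 0$ of rank $|S|$ with $\last[T] = S$.

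Using the cube structure $Fac[T] \cong \mathcal{I}^{|S|}$, I factor $[T] = [T_V] \circ [T']$, where $[T_V]$ is the sub-factor selected by the subset $\last[V] \subset \last[T]$ and $[T']$ is the complementary sub-factor. Both $[T_V]$ and $[V]$ are morphisms with last factors exactly $\last[V]$, so the cubical property (Definition~\ref{def:cubical}(e), established for $\W(\Lambda)$ in Theorem~\ref{thm:cubical}) forces $[T_V] = [V]$; in particular the source of $[T_V]$ is $W$, so $[T'] : W'' \to W$ is of rank $|I|$. Writing $\last[T'] = \{[U_i']\}$, the first step applied to $[V] \circ [T']$ yields $\last([V] \circ [T']) = \last[V] \sqcup \{\phi_V([U_i'])\}$, which must coincide with $S$, forcing $\{\phi_V([U_i'])\} = \{\phi_V([U_i])\}$ as sets.

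The main obstacle, and the last step required, is the injectivity of $\phi_V$ on rank 1 morphisms into $W$, which will upgrade equality of $\phi_V$-images to $\{[U_i']\} = \{[U_i]\}$ and so identify $\last[T'] = \{[U_i]\}$ as desired. I would prove injectivity as follows: if $\phi_V([X]) = \phi_V([X'])$ for rank 1 morphisms $[X] : W_X \to W$ and $[X'] : W_{X'} \to W$, then $[V] \circ [X]$ and $[V] \circ [X']$ have identical last factors, hence agree as morphisms by Definition~\ref{def:cubical}(e); in particular $W_X = W_{X'}$ and $X \sqcup (\E_X^{W_X})^{-1}(V) = X' \sqcup (\E_{X'}^{W_X})^{-1}(V)$ as support $\tau$-rigid pairs in $W_X$. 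By Lemma~\ref{lem:cubeembedding}(a), distinct indecomposable summands of either pair have distinct Jasso categories in $W_X$, and $X$ (respectively $X'$) is the unique summand whose Jasso category equals $W$; hence $X = X'$, completing the argument.
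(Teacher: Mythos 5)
Your proof is correct and follows the same strategy as the paper's: compose the pairwise compatible family with a morphism $[V]:W\rightarrow 0$, apply the target-$0$ hypothesis to the augmented set $\{\phi_V([U_i])\}\sqcup\last[V]$, and then strip off $[V]$ to recover the desired morphism into $W$. The paper's version is a short sketch that asserts without justification both the pairwise compatibility of the augmented set and the factorization $[V']=[V]\circ[U]$ with $\last[U]=\{[U_i]\}$; your verifications of these points (via sub-factors inside the cubes of Corollary \ref{cor:cubeembedding}, the determination of morphisms by their last factors, and the injectivity of $\phi_V$ from Lemma \ref{lem:cubeembedding}(a)) supply exactly the details the paper omits.
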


\begin{proof}
We first show that (2) implies (3). Assume (2) and let $\mathcal{Y}$ be a semibrick pair for $\Lambda$ for which every semibrick pair $\mathcal{Y}'\subseteq \mathcal{Y}$ with $|\mathcal{Y}'| = 2$ is completable. Let $\mathcal{L} = \{[\mathsf{br}(X)]|X \in \mathcal{Y}\}$. For all $\mathcal{L}' \subseteq \mathcal{L}$ with $|\mathcal{L}'| = 2$, Theorem \ref{thm:lastmorphisms} then implies that there exists a wide subcategory $W'$ and a morphism $[U']: W'\rightarrow 0$ in $\mathfrak{W}(\Lambda)$ with $\mathsf{last}[U'] = \mathcal{L}'$. By (2), there thus exists a wide subcategory $W$ and a morphism $[U]:W\rightarrow 0$ with $\mathsf{last}[U] = \mathcal{L}$. Now choose any morphism $[V]:\mathsf{mod}\Lambda \rightarrow W$ and write $[V'] = [U]\circ [V]$. Applying Theorem \ref{thm:lastmorphisms} again, it follows that $\mathcal{Y}$ is contained in the 2-simple minded collection $\mathcal{X}(V')$.

The argument that (3) implies (2) is similar and it is clear that (1) implies (2). Thus it only remains to show that (2) implies (1).

Assume (2) and let $\{[U_i]:W_i\rightarrow W\}$ be a set of rank one morphisms in $\mathfrak{W}(\Lambda)$ which are pairwise compatible as last factors. That is, for $i \neq j$ there exists a wide subcategory $W_{i,j} = W_{j,i}$ and a rank two object $U_i'\sqcup U_{j}' \in \str(W_{i,j})$ so that $J_{W_{i,j}}(U_i') = W_j$, $\mathcal{E}^{W_{i,j}}_{U_i'}(U_j') = U_j$, and the analogous equalities hold when the roles of $i$ and $j$ are interchanged. In particular, by Theorem \ref{thm:littlebijections}(c) we have
$$\mathcal{E}_{U_j}^{W_j} \circ \mathcal{E}_{U_i'}^{W_{i,j}} = \mathcal{E}_{U_j'\sqcup U_i'}^{W_{i,j}} = \mathcal{E}_{U_i}^{W_i} \circ \mathcal{E}_{U_j'}^{W_{i,j}}.$$

Now choose any morphism $[V]:W \rightarrow 0$ and decompose $V \cong V_1\sqcup\cdots \sqcup V_{m}$ as a direct sum of indecomposable objects. As in the proof of Lemma \ref{lem:fewerlasts}, for each $i$ we denote $V^i = \left(\mathcal{E}_{U_i}^{W_i}\right)^{-1}(V)$. We then claim that $$\mathcal{L} = \left\{\left[\mathcal{E}^{W_i}_{V^i}(U_i)\right]\right\} \sqcup \mathsf{last}[V] = \left\{\left[\mathcal{E}^{W_i}_{V^i}(U_i)\right]\right\} \sqcup \left\{\left[\mathcal{E}^{W}_{V_k^c}(V_k)\right]\right\}.$$
is a set of pairwise compatible (as last factors) morphisms with target 0. For example, consider some $U_i \neq U_j$ and denote
$V^{i,j} := \left(\mathcal{E}_{U_i'\sqcup U_j'}^{W_{i,j}}\right)^{-1}(V) = \left(\mathcal{E}_{U_j'}^{W_{i,j}}\right)^{-1}(V^i).$ Then there is a rank two morphism $$[L]:=\left[\mathcal{E}_{V^{i,j}}^{W_{i,j}}(U_i'\sqcup U_j')\right]: J_{W_{i,j}}(V^{i,j}) \rightarrow 0.$$ 
Moreover, denote $U_j'' = \mathcal{E}_{V^{i,j}}^{W_{i,j}} (U_j')$. Theorem \ref{thm:littlebijections}(c) then implies that
\begin{eqnarray*}
	\mathcal{E}_{U_j''}^{J_{W_{i,j}}(V^{i,j})} \circ \mathcal{E}_{V^{i,j}}^{W_{i,j}} (U_i') &=& \mathcal{E}_{V^{i,j}\sqcup U_j'}^{W_{i,j}}(U_i')\\
	&=& \mathcal{E}^{W_i}_{V^i}\circ \mathcal{E}_{U_j'}^{W_{i,j}}(U_i')\\
	&=& \mathcal{E}^{W_i}_{V^i}(U_i).
\end{eqnarray*}
In particular, we have
$$\mathsf{last}[L] = \left\{\left[\mathcal{E}_{V^i}^{W_i}(U_i)\right],\left[\mathcal{E}_{V^i}^{W_j}(U_j)\right]\right\}.$$
The compatibility of the other pairs of elements in $\mathcal{L}$ follows by analogous reasoning. Therefore by (2), there is a wide subcategory $W'$ and a morphism $[V']:W'\rightarrow 0$ so that $\mathcal{L} = \mathsf{last}[V']$. Moreover, there exists $U \in \mathsf{add}(V')$ so that $[V'] = [V]\circ [U]$. It follows that $\{[U_i]\}$ is the set of last morphisms of $[U]$.
\end{proof}

\section{2-Simple Minded Collections for Nakayama Algebras}
\label{sec:nakayama}

We recall that $\Lambda$ is called a \emph{Nakayama algebra} if every indecomposable $\Lambda$-module has a unique composition series. It is well-known \cite[Thm.~V.3.2]{assem_elements} that a basic connected algebra is Nakayama if and only if its quiver is one of the following.

\begin{center}
\begin{tikzpicture}
\node at (0,1) {};
\begin{scope}[shift={(-5,0)}]
	\node at (3.1,0) {1};
	\node at (2.1,0) {2};
	\node at (0,0) {$n$};
	\node at (1.1,0) {$\cdots$};
	\node at (1.55,-1) {$A_n$};
	\begin{scope}[decoration={
	markings,
	mark=at position 1.0 with {\arrow[scale=1.3]{>}}}
	]
		\draw[postaction={decorate}] (.75,0)--(.25,0);
		\draw[postaction={decorate}] (1.85,0)--(1.35,0);
		\draw[postaction={decorate}] (2.85,0)--(2.35,0);
	\end{scope}
\end{scope}

	\node at (3.1,0) {1};
	\node at (2.1,0) {2};
	\node at (0,0) {$n$};
	\node at (1.1,0) {$\cdots$};
	\node at (1.55,-1) {$\Delta_n$};
	\begin{scope}[decoration={
	markings,
	mark=at position 1.0 with {\arrow[scale=1.3]{>}}}
	]
		\draw[postaction={decorate}] (.75,0)--(.25,0);
		\draw[postaction={decorate}] (1.85,0)--(1.35,0);
		\draw[postaction={decorate}] (2.85,0)--(2.35,0);
		\draw[postaction={decorate}] (0,-0.25)--(0,-0.5)--(3.1,-0.5)--(3.1,-0.25);
	\end{scope}
\end{tikzpicture}
\end{center}

Adachi \cite{adachi_classification} gave a combinatorial interpretation of the $\tau$-tilting modules and support $\tau$-tilting modules with no projective direct summands for Nakayama algebras. Asai \cite{asai_semibricks} gave an enumeration of the semibricks of Nakayama algebras. In this section, we give a combinatorial interpretation of the 2-simple minded collections of Nakayama algebras and use this to prove the following theorem.

\begin{thm}
\label{thm:nakayamapairwise}
Every singly left mutation compatible semibrick pair for a Nakayama algebra is completable. In particular, the 2-simple minded collections for a Nakayama algebra are given by pairwise compatibility conditions.
\end{thm}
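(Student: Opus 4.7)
The plan is to prove Theorem \ref{thm:nakayamapairwise} through an explicit combinatorial model for indecomposable modules over Nakayama algebras. First, I would parameterize the indecomposable $\Lambda$-modules by pairs $(i,\ell)$ (a starting vertex and a length), bounded by the Loewy length in the cyclic case $\Delta_n$. Under this parameterization, the brick property, Hom-orthogonality between two bricks, and Ext-orthogonality between two bricks should all translate into explicit combinatorial conditions---roughly, disjointness-and-linking conditions on arcs drawn on a line (in type $A_n$) or on a circle (in type $\Delta_n$), together with a length bound coming from the relations.

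Next, using the brick labeling of $\Hasse(\stt\Lambda)$ recalled just before Theorem \ref{thm:lastmorphisms}, I would give a purely combinatorial description of the 2-simple minded collections for a Nakayama algebra: such a collection corresponds to a maximal family of signed arcs satisfying the above compatibility conditions, necessarily of size $rk(\Lambda)$ by Proposition \ref{prop:sbricks}(a). The minimal left $\Filt S$-approximation $S' \to E$ also translates into a combinatorial operation on the arcs of $S$ and $S'$, and the failure of this map to be either mono or epi corresponds to a specific local overlap pattern---essentially, the pattern in the counterexample before Conjecture \ref{conj:completesbp}.

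With this dictionary in place, I would establish completability by induction on $rk(\Lambda) - |\X|$. Given a mutation compatible semibrick pair $\X = \S_p \sqcup \S_n[1]$ with $|\X| < rk(\Lambda)$, the inductive step amounts to exhibiting a brick $B$ such that $\X \sqcup \{B\}$ or $\X \sqcup \{B[1]\}$ is again a mutation compatible semibrick pair. The combinatorial model should let me read off such a $B$ from a ``gap'' left by the existing arcs; the key point is that mutation compatibility rules out exactly the obstruction pattern that would prevent any such gap from supporting a new arc. Iterating the step produces a 2-simple minded collection containing $\X$.

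The main obstacle will be the cyclic case $\Delta_n$. Unlike the linear case, where arcs live on an interval and the induction can be anchored at an endpoint, arcs on a circle can wrap around, and the Loewy-length bound interacts nontrivially with the overlap conditions. I expect the bulk of the technical work to go into classifying the possible ``gap'' configurations on the circle and verifying that pairwise mutation compatibility globally suffices to avoid all obstructions to adding a new signed brick. Once the completability statement is established, the second sentence of the theorem follows immediately from Definition \ref{def:sbp}: a semibrick pair is a 2-simple minded collection iff it is completable of full rank, and this condition has now been reduced to the pairwise mutation compatibility of its elements.
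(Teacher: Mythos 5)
Your combinatorial setup (indecomposables as pairs $(i,\ell)$, bricks and their Hom/Ext relations as arcs subject to a length bound, mutation compatibility as a local intersection pattern) matches the paper's model, and your proposed extension step is close in spirit to the arguments the paper uses to show a non-maximal pattern admits a compatible new arc. The genuine gap is at the end. After iterating the extension step you obtain a mutation compatible semibrick pair $\X = \S_p\sqcup\S_n[1]$ with $rk(\X)=rk(\Lambda)$, and you then assert this is a 2-simple minded collection. But by Definition \ref{def:sbp} a full-rank semibrick pair is a 2-simple minded collection only if $\thick(\S_p\sqcup\S_n)=\Db(\mods\Lambda)$, and nothing in your argument verifies this generation condition; having the right cardinality is not enough, and ``completable of full rank'' is exactly what you are trying to prove, so invoking it is circular. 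Equivalently, your earlier claim that 2-simple minded collections ``correspond to maximal families of signed arcs satisfying the compatibility conditions'' is asserted rather than proved in the hard direction (maximal $\Rightarrow$ 2-smc); the brick labeling of $\Hasse(\stt\Lambda)$ only yields the easy direction, that every 2-simple minded collection is such a maximal family.

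This generation step is where the paper spends most of its effort. It shows (Propositions \ref{prop:degreezero}--\ref{prop:simplepaths}) that in a maximal admissible arc pattern every pair of marked points is joined by a pseudo arc path and every boundary segment $j\rightarrow (j+1)_n$ is realized by a \emph{proper} arc path, and then (proof of Theorem \ref{thm:pairwisesmc}, following Garver--McConville) shortens an admissible path sequence one step at a time using kernels, cokernels, and extensions of the bricks already present, concluding that every simple module lies in $\thick(\X(A))$ and hence that $\thick(\X(A))=\Db(\mods\Lambda)$. Your proposal needs an argument of this kind (or, alternatively, a counting or bijection argument against Adachi's classification of support $\tau$-tilting modules, which you gesture at but do not carry out) before the final sentence ``iterating the step produces a 2-simple minded collection containing $\X$'' is justified.
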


In order to prove Theorem \ref{thm:nakayamapairwise}, we can assume without loss of generality that our Nakayama algebras are connected. Thus for the remainder of this section, we assume that $\Lambda$ is connected and Nakayama. We fix $n:=rk(\Lambda)$,

\subsection{Bricks for Nakayama Algebras}

We begin by recalling the description of indecomposable $\Lambda$-modules given in the remarks following \cite[Prop 2.2]{adachi_classification}. For $M \in \mathsf{mod}\Lambda$, we denote the Loewy length of $M$ by $l(M)$. Following Adachi's notation, we denote by $i_n$ the unique integer $1\leq k \leq n$ such that $i-k\in n\mathbb{Z}$. We then define
$$[i,j]_n := \begin{cases}
	[i_n,j_n] \textnormal{ if }i_n \leq j_n\\
	[1,j_n]\cup[i_n,n] \textnormal{ if } i_n > j_n
\end{cases}$$
and likewise for $(i,j)_n$, etc. For example, $(1,4)_5 = \{2,3\}$ and $(4,1)_5 = \{5\}$.

\begin{prop}\cite[Section V]{assem_elements},\cite[Prop. 2.2]{adachi_classification}
There is a bijection
$$M:\{(i,j)\in\mathbb{Z}^2|0 < i \leq n,0 < j \leq l(P_i)\}\leftrightarrow\mathsf{ind}(\mathsf{mod}\Lambda)$$
which sends the pair $(i,j)$ to the module $P_i/\mathrm{rad}^jP_i$. The inverse is given by sending an indecomposable module $M$ with projective cover $P_i$ to the pair $(i,l(M))$.
\end{prop}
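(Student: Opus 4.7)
The plan is to lean on the structural theorem for Nakayama algebras: every indecomposable $\Lambda$-module is \emph{uniserial}, meaning it has a unique composition series. This is the defining property of Nakayama algebras recalled from \cite[Thm.~V.3.2]{assem_elements} just before the statement, so I would take it as the starting input and reduce the proposition to basic bookkeeping with uniserial modules.

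First I would show that an indecomposable $M$ is determined up to isomorphism by two invariants: its top (a simple module $S_i$, equivalently a vertex $i$) and its Loewy length $l(M)$. Indeed, $M$ admits a projective cover $p \colon P_i \twoheadrightarrow M$, and since $P_i$ is itself uniserial, its submodule lattice is the chain
$$0 \subset \rad^{l(P_i)-1} P_i \subset \cdots \subset \rad P_i \subset P_i.$$
Thus $\ker p = \rad^j P_i$ for some $j$, giving $M \cong P_i/\rad^j P_i$, and the integer $j$ is forced to equal $l(M)$. This simultaneously shows the assignment $M \mapsto (i, l(M))$ is well-defined and that the forward map $(i,j) \mapsto P_i/\rad^j P_i$ recovers $M$.

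Next I would check that the forward map indeed lands in $\ind(\mods\Lambda)$ and respects the prescribed index bounds. Because $P_i$ is uniserial, each proper quotient $P_i/\rad^j P_i$ is uniserial of length exactly $j$ with top $S_i$; a nonzero uniserial module cannot decompose as a direct sum without violating uniqueness of its composition series, so $P_i/\rad^j P_i$ is indecomposable, and it is nonzero precisely when $0 < j \leq l(P_i)$. Verifying that the two composite assignments are identities is then immediate from the uniqueness established in the previous paragraph. The only nontrivial ingredient is the uniseriality of indecomposables; there is no real obstacle beyond invoking the Nakayama structure theorem, which is exactly why this proposition is attributed to \cite{assem_elements} and \cite{adachi_classification}.
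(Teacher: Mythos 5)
Your argument is correct and is the standard one: the paper gives no proof of this proposition, only citations to \cite{assem_elements} and \cite{adachi_classification}, and your reconstruction (indecomposables are uniserial with simple top, so the projective cover $P_i\twoheadrightarrow M$ has kernel a term $\rad^jP_i$ of the radical filtration of the uniserial module $P_i$, forcing $j=l(M)$) is exactly the argument in those references. One cosmetic point: $P_i/\rad^jP_i$ is nonzero for \emph{every} $j>0$ (it equals $P_i$ once $j\geq l(P_i)$), so the bound $j\leq l(P_i)$ is needed for injectivity of the forward map rather than for nonvanishing, but your inverse-map computation $j=l(M)\leq l(P_i)$ already takes care of this.
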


Although statements (b) and (c) in the following lemma do not appear in \cite{adachi_classification}, they follow immediately from (a) by writing out the composition series.

\begin{lem}\cite[Lem. 2.4]{adachi_classification}
\label{lem:morphs}
The module $M(i,j)$ has unique composition series corresponding to the composition factors
$$S_i,S_{(i+1)_n},\ldots,S_{(i+j-1)_n}.$$
In particular, we have
\begin{enumerate}[label=\upshape(\alph*)]
\item $\mathrm{Hom}(M(i,j),M(k,l)) \neq 0$ if and only if $i \in [k, k+l-1]_n$ and $(k+l-1)_n \in [i,i+j-1]_n.$
\item There is a monomorphism $M(i,j)\hookrightarrow M(k,l)$ if and only if in addition to (a) we have $(i+j-1)_n = (k+l-1)_n$ and $j \leq l$.
\item There is an epimorphism $M(i,j) \twoheadrightarrow M(k,l)$ if and only if in addition to (a) we have $i = k$ and $j \geq l$.
\end{enumerate}
\end{lem}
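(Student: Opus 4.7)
The plan is to exploit the fact that over a Nakayama algebra every indecomposable module is uniserial, so both the submodules and the quotients of each $M(i,j)$ form a chain, and are again of the form $M(-,-)$. I would begin with the composition series claim: since $P_i$ is uniserial with top $S_i$, its unique composition series reads $S_i, S_{(i+1)_n}, \ldots, S_{(i+l(P_i)-1)_n}$ from top to bottom (this is immediate from how the arrows out of vertex $i$ generate $P_i$ in either $A_n$ or $\Delta_n$). Quotienting by $\rad^j P_i$ truncates this list after $j$ steps, giving exactly the composition factors claimed for $M(i,j)$.

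For the nonzero-$\Hom$ description, I would use the factorization $\phi = \iota \circ \pi$ with $M(i,j) \xtwoheadrightarrow{\pi} \image\phi \xhookrightarrow{\iota} M(k,l)$. Because $M(i,j)$ is uniserial, its quotients are exactly the $M(i,j')$ with $0 \le j' \le j$; because $M(k,l) = P_k/\rad^l P_k$ is uniserial, its submodules are exactly the $\rad^m P_k / \rad^l P_k = M((k+m)_n, l-m)$ with $0 \le m \le l$. A nonzero map $\phi$ therefore exists if and only if there are choices of $m$ and $j'$ with $j' \ge 1$ so that $M(i,j') \cong M((k+m)_n, l-m)$; matching the top forces $i = (k+m)_n$ with $0 \le m \le l-1$, which is the condition $i \in [k, k+l-1]_n$, and matching the socle forces $(i + j' - 1)_n = (k+l-1)_n$ with $1 \le j' \le j$, which is the condition $(k+l-1)_n \in [i, i+j-1]_n$. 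Conversely, if both conditions hold, the compatible choice of $j'$ produced by these modular identifications yields a composition $M(i,j) \twoheadrightarrow M(i,j') \hookrightarrow M(k,l)$ that is nonzero.

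Parts (b) and (c) then follow by specializing this picture. A nonzero $\phi$ is a monomorphism iff $\pi$ is an isomorphism, i.e.\ $j' = j$, which upgrades the socle-matching equation to $(i+j-1)_n = (k+l-1)_n$ together with $j \le l$. Dually, $\phi$ is an epimorphism iff $\iota$ is an isomorphism, i.e.\ $m = 0$ and $l-m = l \le j$, which gives $i = k$ and $j \ge l$.

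I expect the proof to be largely bookkeeping once the uniserial factorization is set up; the only real subtlety is handling the cyclic case $\Delta_n$, where the interval notation $[-,-]_n$ wraps around and one must check that the modular identifications for $m$ and $j'$ admit admissible representatives simultaneously. This is where I would be most careful — in particular, verifying that when both interval conditions of (a) hold, the unique $j'$ satisfying $(i+j'-1)_n = (k+l-1)_n$ with $1 \le j' \le j$ automatically also satisfies $j' \le l$, using the fact that Loewy lengths of indecomposable projectives in a $\tau$-tilting finite Nakayama algebra are bounded appropriately.
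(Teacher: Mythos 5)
Your argument is correct. The paper in fact states Lemma \ref{lem:morphs} without any proof, treating it as a standard consequence of the uniserial structure of indecomposables over a Nakayama algebra, so there is no argument in the paper to compare against; your proof --- factoring a nonzero map through its image and matching the quotients of $M(i,j)$, namely the $M(i,j')$ with $1\leq j'\leq j$, against the submodules of $M(k,l)$, namely the $M((k+m)_n,\,l-m)$ with $0\leq m\leq l-1$ --- is exactly the standard argument the authors are suppressing, and parts (b) and (c) do follow by forcing $\pi$ (resp.\ $\iota$) to be an isomorphism as you say. One small imprecision in your closing paragraph: when $j>n$ (possible only for $\Delta_n$ with large Loewy lengths) the integer $j'$ with $1\leq j'\leq j$ and $(i+j'-1)_n=(k+l-1)_n$ is not unique, so you should work with the \emph{minimal} such $j'$; this minimal representative always lies in $[1,n]$ when $l>n$ and equals $l-m_0$ (where $m_0$ is the least nonnegative residue of $i-k$) when $l\leq n$, and in either case it satisfies $j'\leq l$, so the required quotient--submodule match exists whenever both interval conditions of (a) hold. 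This verification is pure modular arithmetic on the cyclic quiver and does not use any bound on Loewy lengths coming from $\tau$-tilting finiteness (which is automatic for Nakayama algebras anyway), so that final appeal can be dropped.
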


This leads to the following characterization of $\mathsf{brick}\Lambda$.

\begin{prop}
\label{prop:nakayamabricks}
Let $M \in \mathsf{ind}(\mathsf{mod}\Lambda)$. Then $M$ is a brick if and only if $l(M) \leq n$.
\end{prop}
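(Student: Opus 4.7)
The plan is to show that the endomorphism ring of $M = M(i,j)$ is a division algebra exactly when $j \leq n$, by analyzing nonzero endomorphisms through their image factorization. The key input is Lemma \ref{lem:morphs} together with the fact that any indecomposable $\Lambda$-module is uniserial, so every nonzero map $f : M \to M$ factors as $M \twoheadrightarrow \image f \hookrightarrow M$ with $\image f$ also uniserial.

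For the ($\Leftarrow$) direction, assume $j \leq n$. Any nonzero endomorphism $f$ of $M(i,j)$ has image $L$ that is simultaneously a quotient and a submodule of $M(i,j)$. By Lemma \ref{lem:morphs}(c), every indecomposable quotient of $M(i,j)$ has the form $M(i,b)$ with $1 \leq b \leq j$. By Lemma \ref{lem:morphs}(b), for $M(i,b) \hookrightarrow M(i,j)$ we need $(i+b-1)_n = (i+j-1)_n$, i.e., $b \equiv j \pmod n$. Since $1 \leq b \leq j \leq n$, this forces $b = j$, so $L = M(i,j)$ and $f$ is surjective, hence an isomorphism. Thus every nonzero element of $\End(M)$ is invertible, so $\End(M)$ is a division algebra.

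For the ($\Rightarrow$) direction, assume $j > n$; this case can only occur for the cyclic quiver $\Delta_n$. I will exhibit a nonzero, non-invertible endomorphism, thereby contradicting the brick property. Set $b := j - n$, so $1 \leq b < j$. The canonical projection $M(i,j) \twoheadrightarrow M(i,b)$ exists by Lemma \ref{lem:morphs}(c). Since $b \equiv j \pmod n$, we have $(i+b-1)_n = (i+j-1)_n$, so Lemma \ref{lem:morphs}(b) provides an inclusion $M(i,b) \hookrightarrow M(i,j)$. Composing gives a nonzero endomorphism whose image has length $b < j$, so it is not an isomorphism; hence $\End(M)$ is not a division algebra.

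No step looks genuinely difficult: both directions reduce to plugging into the congruence conditions of Lemma \ref{lem:morphs}. The only point requiring care is verifying the bounds on $b$ in each case so that the quoted lemma actually applies (in particular, that $b \leq l(P_i)$ in the cyclic case, which follows from $b < j \leq l(P_i)$), and remembering that in the $A_n$ case the bound $j \leq n$ is automatic, so no indecomposable module fails to be a brick.
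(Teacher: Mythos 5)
Your proof is correct and follows essentially the same route as the paper: both arguments reduce the brick condition to Lemma \ref{lem:morphs}, the paper by observing that $M$ is a brick iff no simple repeats in its composition series, and you by unpacking the same fact through the image factorization of an endomorphism. Your version simply spells out the congruence bookkeeping that the paper leaves implicit.
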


\begin{proof}
	Let $M$ be an indecomposable module. Lemma 3.3 implies that $M$ is a brick if and only if each simple module occurs at most once in the unique composition series of $M$. This is true if and only if $l(M) \leq n$.
\end{proof}

As a consequence, we observe the following.

\begin{cor}
\label{cor:monoepibricks}
Let $M,N\in\mathsf{brick}\Lambda$. Then $\mathrm{dim}\,\mathrm{Hom}(M,N) \leq 1$. Moreover, if $\mathrm{dim}\,\mathrm{Hom}(M,N)=1$, then every nonzero morphism $M\rightarrow N$ is a minimal left $\mathsf{Filt} N$-approximation.
\end{cor}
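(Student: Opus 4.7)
The plan is to leverage the uniserial structure of bricks over $\Lambda$, characterized in Proposition~\ref{prop:nakayamabricks} by having Loewy length at most $n$. Write $M = M(i,j)$ and $N = M(k,l)$ with $j, l \leq n$. For the first claim, any nonzero $f : M \to N$ factors as $M \twoheadrightarrow \image f \hookrightarrow N$. The image is simultaneously a quotient of $M$ (so of the form $M(i, j')$ for some $1 \leq j' \leq j$) and a submodule of $N$ (so its socle is $S_{(k+l-1)_n}$); matching socles gives $(i+j'-1)_n = (k+l-1)_n$, which determines $j'$ uniquely in $\{1, \ldots, n\}$. Since the canonical surjection $M \twoheadrightarrow M(i, j')$ and the inclusion $\rad^{l-j'} N \hookrightarrow N$ are each unique up to scalar, $\dim \Hom(M,N) \leq 1$.

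For the second claim, assume $\dim \Hom(M, N) = 1$ and fix a nonzero $f : M \to N$. Left-minimality is immediate: if $\phi \in \End(N)$ satisfies $\phi f = f$, then $\phi$ is a nonzero element of the division ring $\End(N)$, hence invertible. To verify the approximation property, let $g : M \to X$ with $X \in \Filt N$. Since $\Filt N$ is a wide subcategory, it is closed under direct summands, so one can reduce to $X$ indecomposable; as every indecomposable $\Lambda$-module is uniserial, $X = M(k, b)$ for some $b$. Analyzing the filtration of $X$ by copies of $N$ forces $b = ml$ for some $m \geq 1$, and comparing the top $S_k$ of $N$ with the top $S_{(k+l)_n}$ of the next filtration layer forces $l = n$ (so that $\Lambda$ is cyclic Nakayama) whenever $m \geq 2$.

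If $m = 1$, then $X \cong N$ and $g = cf$ for some scalar $c$, so $h = c \cdot \Id_N$ suffices. Otherwise $l = n$ and $X = M(k, mn)$ with $m \geq 2$; the unique submodule of $X$ isomorphic to $N$ is $\rad^{(m-1)n} X$, giving a canonical inclusion $\iota : N \hookrightarrow X$ with $\iota f$ nonzero. A direct count then shows $\dim \Hom(M,X) \leq 1$: morphisms $M \to X$ correspond to elements of $X e_i$ annihilated by $\rad^j$, and since the positions of $S_i$ in $X$ are spaced $n$ apart while $j \leq n$, at most one such position meets the annihilator condition. Hence $\iota f$ spans $\Hom(M,X)$, so $g = c \cdot \iota f$ for some scalar $c$, and $h = c \iota$ provides the required factorization. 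The main obstacle is controlling this last case, in which $X$ carries multiple composition factors equal to $S_i$; the brick hypothesis $j \leq n$ is precisely what rules out extraneous morphisms.
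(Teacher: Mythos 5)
Your proof is correct and follows essentially the same route as the paper's: both reduce to indecomposable objects $X$ of $\Filt N$ (which are the uniserial modules obtained by stacking $m$ copies of $N$, forcing $l(N)=n$ once $m\geq 2$) and show $\dim\,\Hom(M,X)\leq 1$, so that the composite of the given map with the inclusion $N\hookrightarrow X$ spans $\Hom(M,X)$. The paper carries out this count with explicit quiver representations ($K^m$ at each vertex with a nilpotent block), whereas you use the depth bookkeeping of composition factors in the uniserial module and additionally spell out the first claim, the left-minimality, and the final factorization $h=c\iota$ that the paper leaves implicit; these are presentational differences only.
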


\begin{proof}
We prove the result only for the case that $l(N) = n$. The other cases are trivial, as if $l(N) < n$, then $N$ has no self-extensions. Now if the quiver of $\Lambda$ is $A_n$, then $\Lambda\cong KA_n$ and $N\cong P_1$ has no self extensions. Thus we assume without loss of generality that the quiver of $\Lambda$ is $\Delta_n$ and
\begin{center}
\begin{tikzpicture}
	\node at (-1,0) {$N=$};
	\node at (3.1,0) {$K$};
	\node at (2.1,0) {$K$};
	\node at (0,0) {$K$};
	\node at (1.1,0) {$\cdots$};
	\begin{scope}[decoration={
	markings,
	mark=at position 1.0 with {\arrow[scale=1.3]{>}}}
	]
		\draw[postaction={decorate}] (.75,0)--(.25,0) node [midway,anchor=south]{1};
		\draw[postaction={decorate}] (1.85,0)--(1.35,0) node [midway,anchor=south]{1};
		\draw[postaction={decorate}] (2.85,0)--(2.35,0) node [midway,anchor=south]{1};
		\draw[postaction={decorate}] (0,-0.25)--(0,-0.5)--(3.1,-0.5)--(3.1,-0.25);
	\end{scope}
	\node at (1.55,-0.75) {0};
\end{tikzpicture}
\end{center}
Thus up to isomorphism, any indecomposable $E \in \mathsf{Filt} N$ has the form
\begin{center}
\begin{tikzpicture}
	\node at (-1,0) {$E=$};
	\node at (3.2,0) {$K^m$};
	\node at (2.05,0) {$K^m$};
	\node at (-.1,0) {$K^m$};
	\node at (1.05,0) {$\cdots$};
	\begin{scope}[decoration={
	markings,
	mark=at position 1.0 with {\arrow[scale=1.3]{>}}}
	]
		\draw[postaction={decorate}] (.75,0)--(.25,0) node [midway,anchor=south]{1};
		\draw[postaction={decorate}] (1.75,0)--(1.25,0) node [midway,anchor=south]{1};
		\draw[postaction={decorate}] (2.85,0)--(2.4,0) node [midway,anchor=south]{1};
		\draw[postaction={decorate}] (-.1,-0.25)--(-.1,-0.5)--(3.1,-0.5)--(3.1,-0.25);
	\end{scope}
	\node at (1.55,-0.75) {$L$};
\end{tikzpicture}
\end{center}
where $L_{i,j} = \delta_{i-1,j}$ for $i,j \in \{1,\ldots,m\}$. Now in order for $\mathrm{Hom}(M,N)\neq 0$, we see that there exists an integer $p \in [1, n]$ such that $M$ is supported on $[p,n]$ (in the sense that the simple module $S_i$ is a composition factor of $M$ for all $i \in [p,n]$). For example, we could have
\begin{center}
\begin{tikzpicture}
	\node at (-1,0) {$M=$};
	\node at (3,0) {$K$};
	\node at (5,0) {$K$};
	\node at (0,0) {$K$};
	\node at (2,0) {$K$};
	\node at (1.05,0) {$\cdots$};
	\node at (4.05,0) {$\cdots$};
	\begin{scope}[decoration={
	markings,
	mark=at position 1.0 with {\arrow[scale=1.3]{>}}}
	]
		\draw[postaction={decorate}] (.75,0)--(.25,0) node [midway,anchor=south]{1};
		\draw[postaction={decorate}] (1.75,0)--(1.25,0) node [midway,anchor=south]{1};
		\draw[postaction={decorate}] (2.75,0)--(2.3,0) node [midway,anchor=south]{0};
		\draw[postaction={decorate}] (0,-0.25)--(0,-0.5)--(5,-0.5)--(5,-0.25);
		\draw[postaction={decorate}] (3.75,0)--(3.25,0) node [midway,anchor=south]{1};
		\draw[postaction={decorate}] (4.75,0)--(4.25,0) node [midway,anchor=south]{1};
	\end{scope}
	\node at (2.55,-0.75) {1};
\end{tikzpicture}
\end{center}
In any case, we see that $\mathrm{dim}\,\mathrm{Hom}(M,N) = 1$. Moreover, for $E \in \mathsf{Filt}(N)$ indecomposable, we observe that $\mathrm{dim}\,\mathrm{Hom}(M,E) = 1$ as well. The result follows immediately.
\end{proof}

We are now ready to construct our combinatorial model. For positive integers $n,l(1),\ldots,l(n)$, we denote $D(n,l(1),l(2),\ldots,l(n))$ the punctured disk $D^2\setminus\{0\}$ with $n$ marked points on its boundary, labeled counterclockwise $1,2,\ldots,n$. The value of $l(i)$ is called the \emph{length} of the marked point labeled $i$. We further assume 
\begin{enumerate}
	\item For all $i \leq n$, we have $l(i)\geq l((i-1)_n)-1$.
	\item For all $i <n$, we have $l(i) > 1$.
\end{enumerate}
Under these assumptions, we see that $l(1),\ldots,l(n)$ are the Loewy lengths of the projective modules of some connected Nakayama algebra $\Lambda(l(1),\ldots,l(n))$ whose quiver has $n$ vertices. Conversely, given a Nakayama algebra whose quiver has $n$ vertices, we observe that the Loewy lengths of the projective modules satisfy both (1) and (2). For example, $D(4,3,3,3,3)$ corresponds to the cyclic cluster-tilted algebra of type $D_4$ and $D(3,3,2,1)$ corresponds to $A_3$ with straight orientation.

We now wish to relate $\mathsf{brick}\Lambda(l(1),\ldots,l(n))$ to certain directed paths between the marked points of $D(n,l(1),\ldots,l(n))$. We begin with the following definition.

\begin{define}
For two (not necessarily distinct) marked points $i,j$ of \linebreak$D(n,l(1),\ldots,l(n))$, a directed path $a: i\rightarrow j$ in $D^2\setminus \{0\}$ is called an \emph{arc} if
\begin{enumerate}[label=\upshape(\alph*)]
	\item $a$ is homotopic to the counterclockwise boundary arc $i\rightarrow j$.
	\item $a$ does not intersect itself unless $i=j$, in which case the only intersection occurs at the endpoint.
	\item $l(i)\geq (j-i)_n$.
\end{enumerate}
We call $i$ the \emph{source} of $a$, denoted $s(a)$, and $j$ the \emph{target} of $a$, denoted $t(a)$. We call $(j-i)_n$ the \emph{length} of $a$, denoted $l(a)$, and $[s(a),t(a)]_n$ the \emph{support} of $a$, denoted $\mathrm{supp}(a)$. Condition (c) can then be rephrased as $l(s(a)) \geq l(a)$. We denote the set of homotopy classes of arcs of $D(n,l(1),\ldots l(n))$ by $\mathsf{Arc}(n,l(1),\ldots,l(n))$.
\end{define}

The following is immediate.

\begin{prop}
There is a bijection $$M: \mathsf{Arc}(n,l(1),\ldots,l(n)) \rightarrow \mathsf{brick}\Lambda(l(1),\ldots,l(n))$$ given by sending an arc $a$ to the module $M(a) := M(s(a),l(a))$.
\end{prop}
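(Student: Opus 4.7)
The plan is to factor $M$ as a composition of two bijections. Set
$$I := \{(i,j)\in\Z^2 \mid 1\le i \le n,\ 1 \le j \le \min(l(i), n)\}.$$
First I would exhibit the bijection $\Phi: \Arc(n,l(1),\ldots,l(n)) \to I$ given by $\Phi(a) = (s(a), l(a))$. Composing with the restriction to $I$ of the parametrization $(i,j) \mapsto M(i,j)$ from the first proposition of this subsection yields the stated map $a \mapsto M(s(a), l(a))$, and by Proposition \ref{prop:nakayamabricks} this restricted parametrization is a bijection onto $\brick\Lambda(l(1),\ldots,l(n))$, since the constraint $j \le n$ carving $I$ out of the parametrization's indexing set is exactly the condition $l(M(i,j)) \le n$.

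For well-definedness of $\Phi$, condition (c) of the arc definition gives $l(a) \le l(s(a))$, while $l(a) = (t(a)-s(a))_n \in \{1,\ldots,n\}$ by the definition of $(-)_n$, so $\Phi(a) \in I$. For injectivity, a pair $(s,l) \in I$ determines $t = (s+l)_n$, and condition (a) then pins down the homotopy class of $a$ to be that of the counterclockwise boundary path $s \to t$. For surjectivity, any $(i,j) \in I$ is realized by taking that boundary path and pushing it slightly into the interior; the three arc conditions then hold by construction together with $j \le l(i)$.

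The argument amounts to unwinding the definitions, so there is no real obstacle. The only place deserving a moment's care is the loop case $s(a) = t(a)$ (forcing $l(a) = n$): here condition (b) rules out the null-homotopic self-arc, and together with condition (a) leaves exactly one homotopy class winding once about the puncture, so both injectivity and surjectivity persist.
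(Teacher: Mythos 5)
Your argument is correct and is exactly the intended justification: the paper offers no proof at all, simply declaring the proposition ``immediate,'' and your factorization through the index set $I$ (using Proposition \ref{prop:nakayamabricks} to match the constraint $j\le n$ with brickness, and conditions (a)--(c) to identify homotopy classes of arcs with pairs in $I$) is the natural unwinding of the definitions that makes it so. The only cosmetic remark is that in the loop case it is condition (a) (homotopy to the full counterclockwise boundary loop, which winds once about the puncture) rather than condition (b) that excludes the null-homotopic self-arc, but this does not affect the argument.
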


We now consider the various ways that two distinct arcs can (minimally) intersect and the corresponding morphisms and extensions between modules. As we are only interested in characterizing sets of singly left mutation compatible semibrick pairs, the details regarding extensions are intentionally incomplete. In the diagrams accompanying Lemmas \ref{lem:archomslong} and \ref{lem:archoms}, the arc $a_1$ is always shown as solid and the arc $a_2$ is always shown as dashed. We remark that these figures provide only examples, e.g. figures with two arcs each having length $n$, etc. are not shown.

\begin{lem}
\label{lem:archomslong}
Let $a_1,a_2 \in \mathsf{Arc}(n,l(1)\ldots,l(n))$ with $l(a_1) = n$.
\begin{enumerate}[label=\upshape(\alph*)]
	\item If $a_1$ and $a_2$ intersect at two different points, then both $\mathrm{Hom}(M(a_1),M(a_2))$ and $\mathrm{Hom}(M(a_2),M(a_1))$ are nonzero.
	\item If $a_1$ and $a_2$ intersect at only one point, then $\mathrm{Hom}(M(a_1),M(a_2))$ contains an epimorphism if and only if $s(a_1) = s(a_2)$. In this case, we have $\mathrm{Hom}(M(a_2),M(a_1)) = 0 = \mathrm{Ext}(M(a_2),M(a_1))$. We remark that $a_1$ appears counterclockwise from $a_2$.
	\item If $a_1$ and $a_2$ intersect at only one point, then $\mathrm{Hom}(M(a_2),M(a_1))$ contains a monomorphism if and only if $t(a_1) = t(a_2)$. In this case, we have $\mathrm{Hom}(M(a_1),M(a_2)) = 0 = \mathrm{Ext}(M(a_1),M(a_2))$. We remark that $a_2$ appears counterclockwise from $a_1$.
	\item Otherwise $a_1$ and $a_2$ do not intersect and are Hom-Ext orthogonal.
	\end{enumerate}
\end{lem}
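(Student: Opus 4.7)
The plan is to reduce the lemma to a case analysis on the endpoints of $a_2$, invoking Lemma \ref{lem:morphs} for the Hom claims and the uniserial structure of Nakayama modules for the Ext claims. Since $l(a_1) = n$, the identity $l(a) = (t(a) - s(a))_n$ forces $s(a_1) = t(a_1) =: i_1$, so $a_1$ is an embedded loop wrapping once around the puncture. This loop separates the punctured disk into an annular region containing the puncture and a disk region containing the remaining marked points. Setting $M_1 := M(a_1) = M(i_1, n)$, we have $\supp(M_1) = \{1, \ldots, n\}$, so Lemma \ref{lem:morphs}(a) reduces the Hom conditions to: $\Hom(M_1, M(a_2)) \ne 0$ iff $i_1 \in \supp(M(a_2))$, and $\Hom(M(a_2), M_1) \ne 0$ iff $(i_1 - 1)_n \in \supp(M(a_2))$, where $\supp(M(a_2)) = [s(a_2), (s(a_2) + l(a_2) - 1)_n]_n$.

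Next I would match the four geometric configurations to endpoint conditions on $a_2$: (a) $a_2$ is also a loop wrapping once around the puncture with $s(a_2) = t(a_2) =: i_2 \ne i_1$ and $l(a_2) = n$, which forces two transverse interior crossings with $a_1$; (b) $s(a_2) = i_1$ with $l(a_2) < n$; (c) $t(a_2) = i_1$ with $l(a_2) < n$; (d) neither endpoint of $a_2$ equals $i_1$ and $a_2$ is not a loop. In (b), (c), (d) the arc $a_2$ can be drawn in the exterior disk region of $a_1$, so the only possible intersection is a shared endpoint. Reading off the support: in (a) both $i_1$ and $(i_1 - 1)_n$ lie in $\supp(M(a_2)) = \{1,\ldots,n\}$; in (b) only $i_1$ does; in (c) only $(i_1 - 1)_n$ does; in (d) neither does. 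Lemma \ref{lem:morphs}(c) upgrades the nonzero morphism in (b) to an epimorphism via the shared source and $l(a_1) > l(a_2)$; Lemma \ref{lem:morphs}(b) upgrades (c) to a monomorphism via the shared socle $(t(a_2)-1)_n = (i_1-1)_n$.

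For the Ext assertions, I would establish the following criterion from uniseriality: a non-split extension $0 \to M(k, l) \to E \to M(i, j) \to 0$ with $E$ indecomposable forces $E$ to be uniserial with top $S_i$, hence $E = M(i, j+l)$, whose unique submodule of length $l$ equals $M((i+j)_n, l)$; thus $\Ext(M(i,j), M(k,l)) \ne 0$ requires $k = (i+j)_n$ and $j + l \le l(P_i)$. In case (b), $\Ext(M(a_2), M_1) \ne 0$ would force $i_1 = (i_1 + l(a_2))_n$, i.e.\ $l(a_2) \equiv 0 \pmod n$, contradicting $0 < l(a_2) < n$. In case (c), $\Ext(M_1, M(a_2)) \ne 0$ would force $s(a_2) = (i_1 + n)_n = i_1$, contradicting $s(a_2) \ne i_1$. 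In case (d), the two $\Ext$ directions would respectively force $s(a_2) = i_1$ and $t(a_2) = (s(a_2) + l(a_2))_n = i_1$, both false. The main obstacle is making the geometric correspondence fully rigorous --- in particular, the claim that two simple loops based at different marked points and each wrapping once around the puncture must meet in at least two interior points, which follows from a standard winding-number argument in the punctured disk. After that, the Hom part is a direct support computation and the Ext part is a routine check of cyclic-index congruences.
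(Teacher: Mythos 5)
Your treatment of the Hom statements is correct and matches the paper's: since $l(a_1)=n$ forces $s(a_1)=t(a_1)=:i_1$ and $\supp(M(a_1))=\{1,\dots,n\}$, Lemma \ref{lem:morphs}(a) reduces everything to whether $i_1$ and $(i_1-1)_n$ lie in $\supp(M(a_2))$, and parts (b),(c) of Lemma \ref{lem:morphs} upgrade the nonzero maps to an epi (shared top) or a mono (shared socle) exactly as you say. The geometric matching of configurations to endpoint conditions is also fine.

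The gap is in your $\Ext$ criterion. You assert that $\Ext(M(i,j),M(k,l))\neq 0$ requires $k=(i+j)_n$ and $j+l\le l(P_i)$, deduced from the case of an \emph{indecomposable} middle term. But a nonzero class in $\Ext^1$ between uniserials need not be represented by an extension with indecomposable middle term. Concretely, for $KA_3$ with linear orientation, $\Ext^1\bigl(M(1,2),M(2,2)\bigr)\neq 0$ (the Auslander--Reiten sequence $0\to {\scriptsize\begin{matrix}2\\3\end{matrix}}\to{\scriptsize\begin{matrix}1\\2\\3\end{matrix}}\oplus 2\to{\scriptsize\begin{matrix}1\\2\end{matrix}}\to 0$), yet $k=2\neq 3=(i+j)_n$; your criterion would wrongly predict vanishing there. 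So the criterion cannot be invoked as a necessary condition. This matters in your case (b): the honest necessary condition coming from the presentation $0\to\rad^{l(a_2)}P_{i_1}\to P_{i_1}\to M(a_2)\to 0$ is that $\Hom(\rad^{l(a_2)}P_{i_1},M(a_1))\neq 0$, which \emph{is} satisfied (its top $S_{(i_1+l(a_2))_n}$ lies in $\supp(M(a_1))=\{1,\dots,n\}$), so vanishing of $\Ext(M(a_2),M(a_1))$ requires showing that restriction $\Hom(P_{i_1},M(a_1))\to\Hom(\rad^{l(a_2)}P_{i_1},M(a_1))$ is surjective --- equivalently, the splitting argument the paper actually carries out by tracking the summand of $E$ containing $S_{(s(a_1)-1)_n}$ in an arbitrary extension $M(a_1)\hookrightarrow E\twoheadrightarrow M(a_2)$. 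The same issue arises for $\Ext(M(a_2),M(a_1))$ in your case (d). Your conclusions are all true, but you need to replace the false criterion either by the paper's direct analysis of an arbitrary middle term or by the cokernel computation from the projective presentation, where the hypothesis $l(a_1)=n$ does genuine work.
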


\begin{center}
\begin{tikzpicture}[scale=0.5]
	\draw (0,0) circle[radius=2cm];
	\draw (-.1,-.1) to (.1,.1);
	\draw (-.1,.1) to (.1,-.1);
		\draw[very thick] plot [smooth] coordinates{(0,2) (-1,0.5) (0,-1) (1,0.5) (0,2)};
		\draw[very thick,dashed] plot [smooth] coordinates{(-1.41,-1.41)(0,1)(1.41,-1.41)};
		\node at (0,2) [draw,fill,circle,scale=0.4]{};
		\node at (-1.41,-1.41) [draw,fill,circle,scale=0.4]{};
		\node at (1.41,-1.41) [draw,fill,circle,scale=0.4]{};
	\node at (0,-2.5) {Case (a)};
\begin{scope}[shift={(5,0)}]
	\draw (0,0) circle[radius=2cm];
	\draw (-.1,-.1) to (.1,.1);
	\draw (-.1,.1) to (.1,-.1);
		\draw[very thick] plot [smooth] coordinates{(0,2) (-1,0.5) (0,-1) (1,0.5) (0,2)};
		\draw[very thick,dashed] plot [smooth] coordinates{(0,2)(-1.25,1)(-1,-1)(1.41,-1.41)};
		\node at (0,2) [draw,fill,circle,scale=0.4]{};
		\node at (1.41,-1.41) [draw,fill,circle,scale=0.4]{};
	\node at (0,-2.5) {Case (b)};
\end{scope}
\begin{scope}[shift={(10,0)}]
	\draw (0,0) circle[radius=2cm];
	\draw (-.1,-.1) to (.1,.1);
	\draw (-.1,.1) to (.1,-.1);
		\draw[very thick] plot [smooth] coordinates{(0,2) (-1,0.5) (0,-1) (1,0.5) (0,2)};
		\draw[very thick,dashed] plot [smooth] coordinates{(1.41,-1.41) (1.25,1) (0,2)};
		\node at (0,2) [draw,fill,circle,scale=0.4]{};
		\node at (1.41,-1.41) [draw,fill,circle,scale=0.4]{};
		\node at (0,-2.5) {Case (c)};
\end{scope}
\begin{scope}[shift={(15,0)}]
	\draw (0,0) circle[radius=2cm];
	\draw (-.1,-.1) to (.1,.1);
	\draw (-.1,.1) to (.1,-.1);
		\draw[very thick] plot [smooth] coordinates{(0,2) (-1,0.5) (0,-1) (1,0.5) (0,2)};
		\draw[very thick,dashed] plot [smooth] coordinates{(1.41,-1.41) (-1.41,-1.41)};
		\node at (0,2) [draw,fill,circle,scale=0.4]{};
		\node at (1.41,-1.41) [draw,fill,circle,scale=0.4]{};
		\node at (-1.41,-1.41) [draw,fill,circle,scale=0.4]{};
	\node at (0,-2.5) {Case (d)};
\end{scope}
\end{tikzpicture}
\end{center}

\begin{proof}
All results follow from Lemma \ref{lem:morphs}. We prove (b) in detail as an example.

Suppose $a_1$ and $a_2$ intersect at only one point. In the notation of Lemma \ref{lem:morphs}, we have $M(a_1) = M(s(a_1),n)$ and $M(a_2) = M(s(a_2),l(a_2))$. Thus by Lemma \ref{lem:morphs}(c), there is an epimorphism $M(a_1) \twoheadrightarrow M(a_2)$ if and only if $s(a_1) = s(a_2)$ and $n \geq l(a_2)$.

Thus suppose $s(a_1) = s(a_2)$. Then since $a_1 \neq a_2$, it follows that
$$(t(a_1)-1)_n = (s(a_1)+n-1)_n \notin [s(a_2),s(a_2)+l(a_2)-1]_n = [s(a_2),t(a_2)-1]_n,$$
so by Lemma \ref{lem:morphs}(a), we have $\mathrm{Hom}(M(a_2),M(a_1)) = 0$.

Now let $M(a_1) \hookrightarrow E \twoheadrightarrow M(a_2)$ be an exact sequence. We first observe that $(s(a_1) -1)_n \notin [s(a_1),s(a_1)+l(a_2)-1]_n$. Thus, let $M$ be the (necessarily unique) indecomposable direct summand of $E$ which contains $S_{(s(a_1) -1)_n}$ in its composition series. It follows that there exists a unique indecomposable $N = N(i,j)$ in the image of the induced map $M(a_1) \rightarrow M$ which also contains $S_{(s(a_1) -1)_n}$ in its composition series. By definition, we have $M(a_1) \twoheadrightarrow N$. Thus by Lemma \ref{lem:morphs}(c), we have $i = s(a_1)$ and $j \leq n$ (and hence $j = n$ and $N \cong M(a_1)$). This means that $M$ contains $M(a_1)$ as a submodule. Lemma \ref{lem:morphs}(b) then implies that either $S_{(s(a_1) -1)_n}$ occurs twice in the composition series for $M$ or $M \cong N$. However, we know that $S_{(s(a_1) -1)_n}$ occurs only once as a composition factor of $E$. We conclude that $E \cong M(a_1) \sqcup M(a_2)$ and $\mathrm{Ext}(M(a_2),M(a_1)) = 0$, as desired.
\end{proof}

\begin{lem}
\label{lem:archoms}
Let $a_1,a_2 \in \mathsf{Arc}(n,l(1),\ldots,l(n))$ with $l(a_1),l(a_2) < n$.
\begin{enumerate}[label=\upshape(\alph*)]
	\item If $a_1$ and $a_2$ do not intersect, then $M(a_1)$ and $M(a_2)$ are Hom-Ext orthogonal.
	\item If $t(a_1) = s(a_2)$ and $a_1$ does not otherwise intersect $a_2$ then $M(a_1)$ and $M(a_2)$ are Hom orthogonal and $\mathrm{Ext}(M(a_2),M(a_1))=0$. Moreover,
		\begin{enumerate}[label=\upshape(\roman*)]
			\item if $l(a_1) + l(a_2) \leq l(s(a_1))$ then $\mathrm{dim}\mathrm{Ext}(M(a_1),M(a_2)) = 1$ and there is an exact sequence $M(a_2)\hookrightarrow M(a_1a_2)\twoheadrightarrow M(a_1).$
			\item Otherwise $M(a_1)$ and $M(a_2)$ are Hom-Ext orthogonal.							\end{enumerate}
	\item If $t(a_2) = s(a_1)$ and $t(a_1) = s(a_2)$ then $M(a_1)$ and $M(a_2)$ are Hom orthogonal. Moreover,
		\begin{enumerate}[label=\upshape(\roman*)]
			\item  if $l(a_1)+l(a_2) \leq l(s(a_1))$ then $\mathrm{dim}\mathrm{Ext}(M(a_1),M(a_2)) = 1$ and there is an exact sequence $M(a_2)\hookrightarrow M(a_1a_2)\twoheadrightarrow M(a_1).$
			\item if $l(a_1)+l(a_2) \leq l(s(a_2))$ then $\mathrm{dim}\mathrm{Ext}(M(a_2),M(a_1)) = 1$ and there is an exact sequence $M(a_1)\hookrightarrow M(a_2a_1)\twoheadrightarrow M(a_2).$
			\item otherwise $M(a_1)$ and $M(a_2)$ are Hom-Ext orthogonal.
		\end{enumerate}
	\item If $s(a_1) = s(a_2) < t(a_2) < t(a_1)$ and $a_1, a_2$ do not otherwise intersect (in this case, $a_1$ appears counterclockwise from $a_2$) then $\mathrm{Hom}(M(a_1),M(a_2))$ contains an epimorphism. Moreover, $\mathrm{Hom}(M(a_2),M(a_1)) = 0$ and\linebreak $\mathrm{Ext}(M(a_2),M(a_1))=0$.
	\item If $s(a_2)<s(a_1)<t(a_1)=t(a_2)$ and $a_1, a_2$ do not otherwise intersect (in this case, $a_1$ appears counterclockwise from $a_2$) then $\mathrm{Hom}(M(a_1),M(a_2))$ contains a monomorphism. Moreover, $\mathrm{Hom}(M(a_2),M(a_1)) = 0$, and\linebreak $\mathrm{Ext}(M(a_2),M(a_1)) = 0$.
	\item If $a_1$ and $a_2$ intersect once in the interior of the disk then one of \linebreak$\mathrm{Hom}(M(a_1),M(a_2))$ and $\mathrm{Hom}(M(a_1),M(a_2))$ is nonempty and contains no monomorphism or epimorphism.
	\item If $a_1$ and $a_2$ intersect twice in the interior of the disk, then both \linebreak$\mathrm{Hom}(M(a_1),M(a_2))$ and $\mathrm{Hom}(M(a_2),M(a_1))$ are both nonzero.
\end{enumerate}
\end{lem}

\begin{center}
\begin{tikzpicture}[scale=0.5]
	\draw (0,0) circle[radius=2cm];
	\draw (-.1,-.1) to (.1,.1);
	\draw (-.1,.1) to (.1,-.1);
		\draw[very thick] plot [smooth] coordinates{(1.41,1.41)(-1.41,1.41)};
		\draw[very thick,dashed] plot [smooth] coordinates{(-1.41,-1.41)(1.41,-1.41)};
		\node at (1.41,1.41) [draw,fill,circle,scale=0.4]{};
		\node at (-1.41,1.41) [draw,fill,circle,scale=0.4]{};
		\node at (-1.41,-1.41) [draw,fill,circle,scale=0.4]{};
		\node at (1.41,-1.41) [draw,fill,circle,scale=0.4]{};
	\node at (0,-2.5) {Case (a)};
\begin{scope}[shift={(5,0)}]
	\draw (0,0) circle[radius=2cm];
	\draw (-.1,-.1) to (.1,.1);
	\draw (-.1,.1) to (.1,-.1);
		\draw[very thick] plot [smooth] coordinates{(1.41,1.41)(-0.3,0.3)(-1.41,-1.41)};
		\draw[very thick,dashed] plot [smooth] coordinates{(-1.41,-1.41)(1.41,-1.41)};
		\node at (1.41,1.41) [draw,fill,circle,scale=0.4]{};
		\node at (-1.41,-1.41) [draw,fill,circle,scale=0.4]{};
		\node at (1.41,-1.41) [draw,fill,circle,scale=0.4]{};
	\node at (0,-2.5) {Case (b)};
\end{scope}
\begin{scope}[shift={(10,0)}]
	\draw (0,0) circle[radius=2cm];
	\draw (-.1,-.1) to (.1,.1);
	\draw (-.1,.1) to (.1,-.1);
		\draw[very thick] plot [smooth] coordinates{(1.41,1.41)(-0.3,0.3)(-1.41,-1.41)};
		\draw[very thick,dashed] plot [smooth] coordinates{(-1.41,-1.41)(0.3,-0.3)(1.41,1.41)};
		\node at (1.41,1.41) [draw,fill,circle,scale=0.4]{};
		\node at (-1.41,-1.41) [draw,fill,circle,scale=0.4]{};
	\node at (0,-2.5) {Case (c)};
\end{scope}
\begin{scope}[shift={(15,0)}]
	\draw (0,0) circle[radius=2cm];
	\draw (-.1,-.1) to (.1,.1);
	\draw (-.1,.1) to (.1,-.1);
		\draw[very thick] plot [smooth] coordinates{(-1.41,-1.41)(1,0)(-1.41,1.41)};
		\draw[very thick,dashed] plot [smooth] coordinates{(-1.41,-1.41)(1.41,-1.41)};
		\node at (-1.41,1.41) [draw,fill,circle,scale=0.4]{};
		\node at (-1.41,-1.41) [draw,fill,circle,scale=0.4]{};
		\node at (1.41,-1.41) [draw,fill,circle,scale=0.4]{};
	\node at (0,-2.5) {Case (d)};
\end{scope}
\begin{scope}[shift={(2.5,-5)}]
	\draw (0,0) circle[radius=2cm];
	\draw (-.1,-.1) to (.1,.1);
	\draw (-.1,.1) to (.1,-.1);
		\draw[very thick] plot [smooth] coordinates{(1.41,1.41)(-1.41,1.41)};
		\draw[very thick,dashed] plot [smooth] coordinates{(-1.41,-1.41)(1,0)(-1.41,1.41)};
		\node at (1.41,1.41) [draw,fill,circle,scale=0.4]{};
		\node at (-1.41,1.41) [draw,fill,circle,scale=0.4]{};
		\node at (-1.41,-1.41) [draw,fill,circle,scale=0.4]{};
	\node at (0,-2.5) {Case (e)};
\end{scope}
\begin{scope}[shift={(7.5,-5)}]
	\draw (0,0) circle[radius=2cm];
	\draw (-.1,-.1) to (.1,.1);
	\draw (-.1,.1) to (.1,-.1);
		\draw[very thick] plot [smooth] coordinates{(1.41,1.41)(-1.41,1.41)};
		\draw[very thick,dashed] plot [smooth] coordinates{(-1.41,-1.41)(1,0)(0,2)};
		\node at (1.41,1.41) [draw,fill,circle,scale=0.4]{};
		\node at (-1.41,1.41) [draw,fill,circle,scale=0.4]{};
		\node at (-1.41,-1.41) [draw,fill,circle,scale=0.4]{};
		\node at (0,2) [draw,fill,circle,scale=0.4]{};
	\node at (0,-2.5) {Case (f)};
\end{scope}
\begin{scope}[shift={(12.5,-5)}]
	\draw (0,0) circle[radius=2cm];
	\draw (-.1,-.1) to (.1,.1);
	\draw (-.1,.1) to (.1,-.1);
		\draw[very thick] plot [smooth] coordinates{(1.41,1.41)(-1,0)(0,-2)};
		\draw[very thick,dashed] plot [smooth] coordinates{(-1.41,-1.41)(1,0)(0,2)};
		\node at (1.41,1.41) [draw,fill,circle,scale=0.4]{};
		\node at (0,-2) [draw,fill,circle,scale=0.4]{};
		\node at (-1.41,-1.41) [draw,fill,circle,scale=0.4]{};
		\node at (0,2) [draw,fill,circle,scale=0.4]{};
	\node at (0,-2.5) {Case (g)};
\end{scope}
\end{tikzpicture}
\end{center}

\begin{proof}
As for Lemma \ref{lem:archomslong}, all results follow from Lemma \ref{lem:morphs}.
\end{proof}

We say that an arc $a_1$ is \emph{counterclockwise} from an arc $a_2$ (and hence that $a_2$ is \emph{clockwise} from $a_1$) if $a_1$ intersects $a_2$ as in Lemma \ref{lem:archomslong}(b),(c) or Lemma \ref{lem:archoms}(d),(e).

\subsection{Admissible Arc Patterns}

Recall that an object $\mathcal{X} = \mathcal{S}_p\sqcup \mathcal{S}_n[1] \in \mathcal{D}^b(\mathsf{mod}\Lambda)$ is called a singly left mutation compatible semibrick pair if $\mathcal{S}_p,\mathcal{S}_n\in\mathsf{sbrick}\Lambda$, $\mathrm{Hom}(\mathcal{S}_p,\mathcal{S}_n) = 0 = \mathrm{Hom}(\mathcal{S}_p,\mathcal{S}_n[1])$, and for every $S\in \mathcal{S}_p$ and $S'\in\mathcal{S}_n$ with $\mathrm{Hom}(S',S)\neq0$, every minimal left $\mathsf{Filt} S$ approximation $S'\rightarrow E$ is either mono or epi. By translating this definition into the language of arcs, we arrive at the following.

\begin{define}
A collection $A = A_g \sqcup A_r$ of distinct arcs in $\mathsf{Arc}(n,l(1),\ldots,l(n))$ is an \emph{admissible arc pattern} if it can be drawn in $D(l(1),\ldots,l(n))$ so that, for all $a_1,a_2 \in A$,
	\begin{enumerate}[label=\upshape(\alph*)]
		\item The only intersections between $a_1$ and $a_2$ are on the boundary of the disk.
		\item If $s(a_1) = s(a_2)$ or $t(a_1) = t(a_2)$ and $a_1$ is counterclockwise from $a_2$, then $a_1 \in A_r$ and $a_2 \in A_g$.
		\item If $t(a_1) = s(a_2)$ then at least one of the following holds:
			\begin{enumerate}[label=\upshape(\roman*)]
				\item $a_1 \in A_r$.
				\item $a_2 \in A_g$.
				\item $l(a_1) + l(a_2) > l(s(a_1))$.
			\end{enumerate}
	\end{enumerate}
	We refer to $A_g$ as the set of \emph{green} arcs and $A_r$ as the set of \emph{red} arcs. We say a green (resp. red) arc $a$ is \emph{incompatible} with $A$ if $(A_g\cup\{a\})\sqcup A_r$ (resp. $A_g\sqcup(A_r\cup\{a\})$) is not an admissible arc pattern.
\end{define}

Our convention from now on will be to draw green arcs as solid lines, red arcs as dashed lines, and arbitrary arcs as dotted lines. The above definition is engineered to imply the following result.

\begin{prop}
\label{rem:mutationpairarcs}
There is a bijection between the set of admissible arc patterns for $D(n,l(1),\ldots,l(n))$ and the set of singly left mutation compatible semibrick pairs for $\Lambda(n,l(1),\ldots,l(n))$ given by
$$A \mapsto \mathcal{X}(A) := \{M(a)|a \in A_g\}\sqcup\{M(a)[1]|a \in A_r\}.$$
\end{prop}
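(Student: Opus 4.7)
The plan is to establish this as a bijection at the level of sets using the underlying bijection $M: \Arc(n, l(1), \ldots, l(n)) \to \brick\Lambda$, and then match the three admissibility conditions against the defining conditions of a mutation compatible semibrick pair via the case-by-case Hom/Ext analysis in Lemmas \ref{lem:archomslong} and \ref{lem:archoms}. Since $M$ is a bijection on bricks, the assignment $A \mapsto \X(A)$ is automatically injective, so the work lies in showing it is well-defined (admissible patterns yield mutation compatible semibrick pairs) and surjective (every mutation compatible semibrick pair lifts to an admissible pattern).

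For the forward direction, fix an admissible $A = A_g \sqcup A_r$ and verify each defining property of $\X(A) = \S_p \sqcup \S_n[1]$. Condition (a) of admissibility excludes the configurations of Lemmas \ref{lem:archomslong}(a), \ref{lem:archoms}(f), \ref{lem:archoms}(g); these are precisely the configurations producing Homs with no mono or epi, or bidirectional Homs. Condition (b) forces the red/green assignment in configurations \ref{lem:archomslong}(b)(c) and \ref{lem:archoms}(d)(e) to be the unique one compatible with both the semibrick property on each of $\S_p, \S_n$ and the vanishing $\Hom(\S_p, \S_n) = 0$, and in each such allowed case the induced morphism $M(a_r) \to M(a_g)$ is an epi or mono (hence mutation compatible). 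Condition (c), by contrast, is designed to block sub-cases \ref{lem:archoms}(b)(i) and \ref{lem:archoms}(c)(i) whenever $a_1 \in A_g, a_2 \in A_r$, which are exactly the configurations producing $\Ext(\S_p, \S_n) \neq 0$. Corollary \ref{cor:monoepibricks} allows us to identify any nonzero morphism between bricks with the minimal left $\Filt$-approximation, so mutation compatibility follows immediately.

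For surjectivity, start with a mutation compatible semibrick pair $\X = \S_p \sqcup \S_n[1]$ and take $A_g = M^{-1}(\S_p)$, $A_r = M^{-1}(\S_n)$. Condition (a) is the crux: two arcs force an interior intersection only in configurations \ref{lem:archomslong}(a), \ref{lem:archoms}(f), \ref{lem:archoms}(g). In cases \ref{lem:archomslong}(a) and \ref{lem:archoms}(g) both Homs are nonzero, so regardless of the coloring at least one of semibrick-ness or $\Hom(\S_p, \S_n) = 0$ fails. In case \ref{lem:archoms}(f) a single nonzero Hom exists with no mono or epi; the same-color or $\S_p \to \S_n$ sub-cases are ruled out as before, while the remaining $\S_n \to \S_p$ sub-case is ruled out precisely by mutation compatibility. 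Thus all pairs can be drawn with boundary-only intersections, giving (a). Conditions (b) and (c) follow by reading the forced colorings off the same table of Hom and Ext vanishings.

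The main obstacle I expect is the bookkeeping in direction (i) to pin down that the $\Ext(\S_p, \S_n) = 0$ condition corresponds \emph{exactly} to admissibility condition (c), since the extension pattern in Lemma \ref{lem:archoms}(b)(c) depends on the numerical inequality $l(a_1)+l(a_2) \leq l(s(a_1))$ rather than purely on the combinatorial arrangement, and the statement must align perfectly for the bijection to close up. A parallel subtlety in direction (ii) is that mutation compatibility, rather than ordinary Hom/Ext vanishing, is what disallows the single-interior-intersection configuration \ref{lem:archoms}(f) whenever the Hom goes from $\S_n$ to $\S_p$; this is the step that most visibly exploits the definition of mutation compatible semibrick pair as opposed to a bare semibrick pair.
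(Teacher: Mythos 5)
Your proposal is correct and follows essentially the same route as the paper: both directions are verified by matching the admissibility conditions (a)--(c) against the Hom/Ext case analysis of Lemmas \ref{lem:archomslong} and \ref{lem:archoms}, with Corollary \ref{cor:monoepibricks} identifying nonzero morphisms between bricks with minimal left $\Filt$-approximations, and with mutation compatibility (rather than the bare semibrick-pair condition) being exactly what excludes the single-interior-intersection configuration of Lemma \ref{lem:archoms}(f). The two subtleties you flag at the end are precisely the points the paper's proof also turns on.
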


\begin{proof}
	Suppose $A = A_g \sqcup A_r$ is an admissible arc pattern. Let $a_1, a_2 \in A$. 
	
	If $a_1,a_2 \in A_g$, then conditions (a) and (b) imply that, without loss of generality, $a_1$ and $a_2$ intersect as in one of Lemmas \ref{lem:archomslong}(d) or \ref{lem:archoms}(a),(b),(c). In all four cases, $M(a_1)$ and $M(a_2)$ are Hom orthogonal. The same argument shows that if $a_1,a_2 \in A_r$ then $M(a_1)$ and $M(a_2)$ are Hom orthogonal.
	
	Now suppose $a_1 \in A_g$ and $a_2 \in A_r$. Conditions (a) and (b) then imply that, up to relabeling, $a_1$ and $a_2$ intersect as in one of Lemmas \ref{lem:archomslong}(b),(c),(d) or \ref{lem:archoms}(a),(b),(c),(d),(e). The fact that if $a_1$ and $a_2$ share a source or target then $a_1$ is counterclockwise from $a_2$ then implies that $\mathrm{Hom}(M(a_1),M(a_2)) = 0$. Moreover, we see from Corollary \ref{cor:monoepibricks} and Lemmas \ref{lem:archomslong} and \ref{lem:archoms} that under this assumption, if $\mathrm{Hom}(M(a_2),M(a_1)) \neq 0$, then every minimal left $\mathsf{Filt} (M(a_2))$-approximation $M(a_1) \rightarrow M(a_2)$ is either mono or epi. Finally, condition (c) implies that in any case, $\mathrm{Ext}(M(a_1),M(a_2)) = 0$. This shows that $\mathcal{X}(A)$ is a singly left mutation compatible semibrick pair.
	
	Now let $\mathcal{X} = \mathcal{S}_p \sqcup \mathcal{S}_n[1]$ be a singly left mutation compatible semibrick pair. Let $A_g = \{a|M(a) \in \mathcal{S}_p\}$ and $A_r = \{a|M(a) \in \mathcal{S}_n\}$. Let $a_1, a_2 \in A$.
	
	Since $M(a_1)$ and $M(a_2)$ are either Hom orthogonal or up to relabeling every nonzero morphism $M(a_1) \rightarrow M(a_2)$ is either mono or epi, Corollary \ref{cor:monoepibricks} and Lemmas \ref{lem:archomslong} and \ref{lem:archoms} imply that either $a_1$ and $a_2$ do not intersect or the only intersections between $a_1$ and $a_2$ are on the boundary of the disk and, up to relabeling, $a_1$ and $a_2$ intersect as in Lemmas \ref{lem:archomslong}(b),(c) or \ref{lem:archoms}(b),(c),(d),(e).
	
	Now suppose $s(a_1) = s(a_2)$ and $a_1$ is counterclockwise from $a_2$ (that is, $a_1$ and $a_2$ intersect as in Lemmas \ref{lem:archomslong}(b) or \ref{lem:archoms}(d)). This means there is a nonzero morphism $M(a_1)\rightarrow M(a_2)$. Therefore $M(a_1)\in\mathcal{S}_n$ and $M(a_2) \in \mathcal{S}_p$, that is, $a_1 \in A_r$ and $a_2 \in A_g$ as desired. A similar argument holds if $t(a_1) = t(a_2)$ and $a_1$ is counterclockwise from $a_2$.
	Finally, suppose that $t(a_1) = s(a_2)$ (that is, $a_1$ and $a_2$ intersect as in Lemmas \ref{lem:archomslong}(b),(c) or \ref{lem:archoms}(b),(c)). Suppose for a contradiction that $a_1 \in A_g$, $a_2 \in A_r$, and $l(a_1) + l(a_2) \leq l(s(a_1))$. This means $M(a_1) \in \mathcal{S}_p, M(a_2) \in \mathcal{S}_n$, and $\mathrm{Ext}(M(a_1),M(a_2)) \neq 0$, a contradiction. We conclude that $A$ is an admissible arc pattern. Moreover, it is clear that $\mathcal{X}(A) = \mathcal{X}$ and these maps are inverse to each other.
\end{proof}

Our next task is to determine which admissible arc patterns correspond to 2-simple minded collections.

\begin{define}
An admissible arc pattern is \emph{maximal} if it is not a proper subset of another admissible arc pattern.
\end{define}

\begin{ex}\label{ex:arcpattern} A maximal admissible arc pattern for $D(3,3,2,1)$, which corresponds to the algebra $KA_3$ (where $A_3$ is given straight orientation), is shown in Figure \ref{fig:arcpattern}. We note that in this example, $\mathcal{X}(A) = 1\sqcup \scriptsize{\begin{matrix}2\\3\end{matrix}\sqcup 3[1]}$ is a 2-simple minded collection.
A complete set of examples for this algebra and for the cyclic cluster-tilted algebra of type $D_4$ can be found in Section \ref{sec:examples}.
\end{ex}

\begin{figure}
\begin{tikzpicture}[scale=0.5,baseline=0]
	\draw (0,0) circle[radius=2cm];
	\draw (-.1,-.1) to (.1,.1);
	\draw (-.1,.1) to (.1,-.1);
		\draw[very thick] plot [smooth] coordinates{(0,2)(-1.73,-1)};
		\draw[very thick,dashed] plot [smooth] coordinates{(0,2)(1,1)(1.73,-1)};
		\draw[very thick] plot [smooth] coordinates{(0,2)(0.5,-0.5)(-1.73,-1)};
		\node at (0,2) [draw,fill,circle,scale=0.4,label=above:1]{};
		\node at (-1.73,-1) [draw,fill,circle,scale=0.4,label=south west:2]{};
		\node at (1.73,-1) [draw,fill,circle,scale=0.4,label=south east:3]{};
	\node at (0,2.5){};
	\node at (0,-2.5){};
\end{tikzpicture}
\caption{A maximal admissible arc pattern for $D(3,3,2,1)$. The corresponding semibrick pair is a 2-simple minded collection. Recall that green arcs are drawn as solid and red arcs are drawn as dashed.}\label{fig:arcpattern}
\end{figure}

We now wish to prove in general that the semibrick pair corresponding to a maximal admissible arc pattern is a 2-simple minded collection.

\begin{thm}
\label{thm:pairwisesmc}
Let $A$ be an admissible arc pattern for $D(n,l(1),\ldots,l(n))$. Then $\mathcal{X}(A)$ is a 2-simple minded collection if and only if $A$ is maximal. In particular, by Proposition \ref{rem:mutationpairarcs}, every 2-simple minded collection occurs in this way.
\end{thm}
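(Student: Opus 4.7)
The plan is to establish both directions of the equivalence by combining the bijection $\smc\Lambda \to \sbrick\Lambda$ sending $\X = \S_p \sqcup \S_n[1] \mapsto \S_p$ (part (a) of the unlabeled proposition just before Definition \ref{def:sbp}) with the combinatorial classification of arc intersections from Lemmas \ref{lem:archomslong} and \ref{lem:archoms}.

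For the implication $\X(A) \in \smc\Lambda \Rightarrow A$ maximal, I proceed by contradiction. Suppose $A \subsetneq A'$ with $A'$ admissible. By Proposition \ref{rem:mutationpairarcs}, $\X(A')$ is a mutation compatible semibrick pair strictly containing $\X(A)$, and since $|\X(A)| = n$ by Proposition \ref{prop:sbricks}(a), we have $|\X(A')| > n$. Assuming (up to symmetry) the new arc $a \in A' \setminus A$ is green, the enlarged positive semibrick $\S_p' = \S_p \sqcup \{M(a)\}$ extends uniquely, via the aforementioned bijection, to an smc $\widetilde{\X} = \S_p' \sqcup \widetilde{\S_n}[1]$ of size $n$, forcing $|\widetilde{\S_n}| = |\S_n| - 1$. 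The contradiction comes from comparing $\X(A') = \S_p' \sqcup \S_n[1]$, which has the same positive part as $\widetilde{\X}$ but a strictly larger negative part: specifically, some brick in $\S_n \setminus \widetilde{\S_n}$ must fail an $\Ext$-orthogonality relation with $\S_p'$, contradicting the semibrick pair property of $\X(A')$.

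For the converse ($A$ maximal $\Rightarrow \X(A) \in \smc\Lambda$), let $\widetilde{\X} = \S_p \sqcup \widetilde{\S_n}[1]$ be the unique smc extending $\S_p$. By the forward direction applied to $\widetilde{\X}$, there is a corresponding maximal admissible arc pattern $\widetilde{A}$ with $\widetilde{A}_g = A_g$. To conclude $\X(A) = \widetilde{\X}$, I need $\widetilde{A}_r = A_r$. Suppose $a \in \widetilde{A}_r \setminus A_r$; then $a$ is already compatible with $A_g = \widetilde{A}_g$, so by maximality of $A$, inserting $a$ into $A_r$ must conflict with some existing red arc $b \in A_r$. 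Symmetric reasoning on $\widetilde{A}$ shows the same conflict would prevent $b$ from coexisting with $a$ in $\widetilde{A}_r$, contradicting the maximality of $\widetilde{A}$. Methodically ruling out this scenario through the intersection cases of Lemmas \ref{lem:archomslong} and \ref{lem:archoms} forces $\widetilde{A}_r = A_r$.

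The main obstacle I anticipate is making the forward direction's contradiction fully precise. Unlike the smc cardinality bound, the bound $|\S_p| + |\S_n| \leq n$ for \emph{all} mutation compatible semibrick pairs is nontrivial (the earlier counterexample confirms non-completable semibrick pairs exist), so the argument must carefully identify the specific $\Ext$-obstruction that the extra element of $\S_n$ creates against $\S_p'$. The converse direction's combinatorial uniqueness of red arcs given the green arcs will also require patient casework across the permitted intersection types of Lemmas \ref{lem:archomslong} and \ref{lem:archoms}.
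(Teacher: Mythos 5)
Your forward implication (``$\X(A)$ a 2-smc $\Rightarrow$ $A$ maximal'') is the easy half, and the paper dispatches it in one sentence; but as written your version contains an unjustified assertion. You claim that ``some brick in $\S_n\setminus\widetilde{\S_n}$ must fail an $\Ext$-orthogonality relation with $\S_p'$,'' which is exactly the point in question and is never argued. The clean argument is that no brick $B$ can be adjoined to a 2-simple minded collection $\S_p\sqcup\S_n[1]$ while preserving the semibrick-pair conditions: writing $\T=\Filt\Fac\S_p$ for the associated torsion class with torsion-free class $\Filt\Sub\S_n$, the canonical sequence $0\to tB\to B\to B/tB\to 0$ forces either $\Hom(B,\S_n)\neq 0$ or $\Hom(\S_p,B)\neq 0$. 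This is repairable, so it is not the main issue.

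The serious gap is in the converse. Your plan is to extend $\S_p$ to the unique 2-smc $\widetilde{\X}$, pass to its maximal arc pattern $\widetilde{A}$ with $\widetilde{A}_g=A_g$, and then force $\widetilde{A}_r=A_r$. The contradiction you propose does not close: if $a\in\widetilde{A}_r\setminus A_r$ conflicts with some $b\in A_r$, that $b$ need not belong to $\widetilde{A}_r$, so the pairwise conflict between $a$ and $b$ contradicts neither the admissibility nor the maximality of $\widetilde{A}$ --- it merely shows $b\notin\widetilde{A}_r$, which was already consistent with everything. What your reduction actually requires is the statement that a maximal admissible arc pattern is determined by its set of green arcs, and that claim carries essentially the full weight of the theorem; ``methodically ruling out this scenario through the intersection cases'' is precisely where all the content lives and it is not carried out. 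A further warning sign is that nothing in your argument isolates what is special about Nakayama algebras, yet the analogous statement fails for general $\tau$-tilting finite algebras (Conjecture \ref{conj:completesbp} is false in general), so no purely formal reduction of this kind can suffice. The paper's proof is genuinely constructive: it sets $\mathfrak{T}=\thick(\X(A))$ and shows every simple module lies in $\mathfrak{T}$, using the connectivity of maximal arc patterns (Proposition \ref{prop:degreezero} and the pseudo-arc-path proposition following it) to produce a proper arc path $j\to(j+1)_n$ for each marked point $j$ (Proposition \ref{prop:simplepaths}), and then inducting on the length of an admissible path sequence, concatenating adjacent arcs via the exact sequences, kernels, and cokernels of Lemmas \ref{lem:archomslong} and \ref{lem:archoms}, all of which remain in $\mathfrak{T}$. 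Since the simples form a 2-smc, $\mathfrak{T}=\Db(\mods\Lambda)$ and Definition \ref{def:sbp} concludes. You would need to supply an argument of comparable substance for your approach to work.
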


As admissible arc patterns are given by pairwise compatibility conditions, this result is equivalent to Theorem \ref{thm:nakayamapairwise}. We begin with several results pertaining to maximal arc patterns.

\begin{define}
Let $A$ be an admissible arc pattern for $D(n,l(1),\ldots,l(n))$ and let $A^*$ denote the set of inverse paths to those in $A$. For $v, v'$ marked points on the punctured disk, we say a sequence $(a_1,\ldots,a_k) \in (A\cup A^*)^k$ is an \emph{arc path} of $A$ from $v$ to $v'$ if
\begin{enumerate}[label=\upshape(\alph*)]
	\item $s(a_1) = v$.
	\item $t(a_k) = v'$.
	\item $t(a_j) = s(a_{j+1})$ for $1\leq j < k$.
	\item $a_1\cdots a_k$ is homotopic to the boundary arc $v\rightarrow v'$.
	\item $M(a_1,\ldots,a_k):=M(v\rightarrow v')$ is defined (and is a brick).
\end{enumerate}
We set $\mathrm{supp}(a_1,\ldots,a_k):=\mathrm{supp}(v\rightarrow v')$. An arc path $(a_1,\ldots a_k)$ is called \emph{proper} if every subpath $(a_i,\ldots a_j), 1\leq i\leq j\leq k$, is either an arc path or the reverse of an arc path. If only conditions (a)-(c) hold, we refer to the sequence of arcs as a \emph{pseudo-arc path}.
\end{define}

\begin{ex}
In the setup of Example \ref{ex:arcpattern} shown above, we identify each brick with its arc. Then $\left(\scriptsize{\begin{matrix}2\\3\end{matrix}}, 3[1]^*\right)$ is a proper arc path from 2 to 3 and $\left(\scriptsize{\begin{matrix}2\\3\end{matrix}}, 1, \scriptsize{\begin{matrix}2\\3\end{matrix}},3[1]^*\right)$ is a pseudo arc path from 2 to 3. The notation $3[1]^*$ represents traveling clockwise along the arc corresponding to $3[1]$.
\end{ex}

We now build toward the main theorem of this section by proving several propositions about proper arc paths.

\begin{prop}
\label{prop:degreezero}
Let $A$ be a maximal arc pattern for $D(n,l(1),\ldots,l(n))$. Then every vertex is the source or target of at least one arc.
\end{prop}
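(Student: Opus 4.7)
The plan is to argue by contradiction. Assume $v$ is not the source or target of any arc in $A$, and construct an arc with $v$ as endpoint that can be added to $A$, contradicting maximality.

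First, I analyze the two length-one candidates $a_+ : v \to (v+1)_n$ and $a_- : (v-1)_n \to v$. Since $v$ has no incident arcs, the only constraints to check involve the non-$v$ endpoint. A short color analysis using rules (b) and (c) shows: $a_+$ can be added (as a red arc) unless $A$ contains a red arc $c$ with $t(c) = (v+1)_n$; symmetrically, $a_-$ can be added (as green) unless $A$ contains a green arc $d$ with $s(d) = (v-1)_n$. If either obstruction is absent, we are done.

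Otherwise both $c : w \to (v+1)_n$ red and $d : (v-1)_n \to u$ green exist in $A$, and by construction each has $v$ strictly interior to its support. I then perform case analysis on the endpoints of $c$ and $d$. If they share a source or a target, rule (b) forces an immediate color contradiction. If the four endpoints $\{w,(v+1)_n,(v-1)_n,u\}$ are pairwise distinct, they interleave cyclically on the boundary, so $c$ and $d$ must cross in the interior of $D$, contradicting admissibility of $A$. The only surviving case is $w = u$, i.e. $t(d) = s(c)$. A short length calculation using that $v$ is strictly interior to both supports gives $l(c) + l(d) = n + 2$, and hence $(w-v)_n = l(d) - 1$.

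In this remaining case I try the arc $a : v \to w$ of length $l(d) - 1$, which is a valid arc since the Nakayama convention yields $l(v) \geq l((v-1)_n) - 1 \geq l(d) - 1$. Added as red: rule (b) at the shared target $w$ is satisfied since $a$ is shorter than, hence counterclockwise from, the green arc $d$; rule (c) at the consecutive pair $(a,c)$ holds since $a$ is red. For interior crossings: an interleaving analysis shows that any $b \in A$ crossing $a$ would also cross $c$ or $d$ unless $b$ is the specific arc $b_0 : (v+1)_n \to (v-1)_n$ (which shares endpoints with both $c$ and $d$ but bypasses $v$). If $b_0 \notin A$, the arc $a$ works. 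If $b_0 \in A$, I replace $a$ with an alternative arc having $v$ as endpoint that shares an endpoint with $b_0$ --- for example, $(v \to (v-1)_n)$ as green when $b_0$ is red, or $((v+1)_n \to v)$ as red when $b_0$ is green --- so that the new interior-crossing argument rules out all other conflicts.

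The main obstacle is the bookkeeping in this final case, especially the subcase $b_0 \in A$: one must verify both that the proposed alternative arcs satisfy the Nakayama length bound $l(s(a)) \geq l(a)$ and that no further arc in $A$ obstructs them. These verifications rely on iterating the interleaving argument together with the rather rigid structural constraints that $c$, $d$, and $b_0$ jointly impose on the remaining arcs of $A$.
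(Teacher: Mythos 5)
Your opening matches the paper's argument: since $v$ has no incident arc, maximality forces a red arc $c$ with $t(c)=(v+1)_n$ and a green arc $d$ with $s(d)=(v-1)_n$, each with $v$ strictly interior to its support, and the shared-source and shared-target configurations are killed by rule (b). The divergence, and the gap, is in your treatment of the remaining configurations. The key fact you are missing is that two arcs meeting head-to-tail, $t(d)=s(c)=w$, can be drawn without an interior crossing only when their supports meet in nothing but the point $w$, i.e.\ only when $l(c)+l(d)\le n$. Here your own computation gives $l(c)+l(d)=n+2$: the supports $[(v-1)_n,w]_n$ and $[w,(v+1)_n]_n$ overlap in all of $\{(v-1)_n,v,(v+1)_n\}$ besides $w$, so $d$ starts strictly inside the disk cut off by $c$, is homotopic to the counterclockwise boundary path that leaves that disk, and must therefore cross $c$ in the interior --- violating admissibility (a). So the case $w=u$ is not a ``surviving'' case at all; it is an immediate contradiction, and the proof is finished at that point (this is exactly what the paper's one-line dichotomy ``non-crossing forces a shared source or a shared target'' accomplishes). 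The same misunderstanding infects the construction you build inside this case: the arc $a\colon v\to w$ you propose to add satisfies $l(a)+l(c)=n+1>n$ with $t(a)=s(c)$, so it would itself cross $c$ in the interior and cannot be added. Since you explicitly leave the bookkeeping of this final case unverified, and the argument you sketch there is built on an arc that is not actually compatible with $A$, the proof as written is incomplete precisely where it departs from the paper.

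Two smaller points. First, your enumeration after the shared-endpoint cases is not exhaustive: if $c$ is a loop of length $n$ at $(v+1)_n$ (so $s(c)=t(c)$) or $d$ is a loop at $(v-1)_n$, the four endpoints are not pairwise distinct and $w\ne u$ need not hold, so these configurations fall outside all of your named cases; they too force a crossing (the other arc passes over the loop's basepoint), but this needs to be said. Second, in the pairwise-distinct case the justification ``the endpoints interleave cyclically, so the arcs cross'' is not quite right: both endpoints of $d$ can lie strictly inside $\supp(c)$ without interleaving as chords, yet the arcs still must cross because $\supp(d)\not\subseteq\supp(c)$ forces $d$ to wind the wrong way around the puncture. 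The conclusion is correct, but the reason is the homotopy constraint, not interleaving.
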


\begin{proof}
Assume there exists a marked point $j$ which is not the source or target of any arc in $A$. Then since $A$ is maximal, a red arc $j\rightarrow (j+1)_n$ must be incompatible with $A$. Thus there must be a red arc $R \in A$ with $t(R) = (j+1)_n$. Likewise, as a green arc $(j-1)_n\rightarrow j$ must be incompatible with $A$, there must be a green arc $G\in A$ with $s(G) = (j-1)_n$. Now for $R$ and $G$ to not cross in the interior of the disk, either $s(G) = (j+1)_n$ or $t(R) = (j-1)_n$. However, in either case, $G$ will be counterclockwise from $R$, a contradiction.

\begin{center}
\begin{tikzpicture}[scale=0.55]
	\draw (0,0) circle[radius=2cm];
	\draw (-.1,-.1) to (.1,.1);
	\draw (-.1,.1) to (.1,-.1);
		\draw[very thick] (1.4,1.4) to (.5,1);
		\draw[very thick,dashed] (-.5,1) to  (-1.4,1.4);
		\node at (0,2) [draw,fill,circle,scale=0.4,label=$j$]{};
		\node at (-1.4,1.4) [draw,fill,circle,scale=0.4,label=left:$(j+1)_n$]{};
		\node at (1.4,1.4) [draw,fill,circle,scale=0.4,label=right:$(j-1)_n$]{};
\end{tikzpicture}
\end{center}
\end{proof}

\begin{prop}
Let $A$ be a maximal arc pattern for $D(n,l(1),\ldots,l(n))$ and let $i$ and $j$ be marked points. Then there exists a pseudo arc path $i\rightarrow j$.
\end{prop}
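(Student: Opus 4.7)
The plan is to reformulate and prove the statement by contradiction. A pseudo arc path from $i$ to $j$ is exactly a walk in the undirected graph $G_A$ on the marked points $\{1,\dots,n\}$ whose edges are the arcs of $A$, so the proposition amounts to the assertion that $G_A$ is connected. Suppose for contradiction that $G_A$ has at least two connected components. Since the marked points lie in cyclic order on the boundary of $D(n,l(1),\dots,l(n))$, some two cyclically consecutive marked points $k$ and $(k+1)_n$ must lie in distinct components of $G_A$; otherwise transitivity along the boundary cycle would collapse all marked points into a single component.

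Next I would invoke the previous proposition to conclude that $k$ and $(k+1)_n$ are each incident to at least one arc of $A$. Since these arcs stay within their respective components of $G_A$, the set $A$ contains no arc directly joining $k$ and $(k+1)_n$; in particular, the length one sliver $a\colon k\to (k+1)_n$ is absent from $A$. The remaining task is to produce an arc that may be appended to $A$ while preserving admissibility, which contradicts maximality.

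The natural first candidate is $a$ itself. Condition (a) is automatic: $a$ may be drawn arbitrarily close to the boundary segment between $k$ and $(k+1)_n$, so it creates no interior crossings with existing arcs. Because $l(a)=1$, every arc of $A$ sharing an endpoint with $a$ is strictly longer and is therefore counterclockwise from $a$ in the sense of the definition preceding Proposition \ref{rem:mutationpairarcs}. Condition (b) then forces every such arc to be red and forces $a$ itself to be green; condition (c) simplifies because $l(a)=1$, so the length disjunct $l(a')+1>l(s(a'))$ is easy to meet. When no green obstruction exists at the source $k$ or the target $(k+1)_n$, the sliver $a$ may be adjoined as a green arc, contradicting maximality at once.

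The main technical work, and the principal obstacle, lies in the residual case where both colourings of $a$ are blocked by existing arcs. Here the approach follows the template of the proof of the previous proposition: a close inspection, case by case, of the arcs of $A$ incident to $k$ and $(k+1)_n$. Using Lemmas \ref{lem:archomslong} and \ref{lem:archoms} as bookkeeping, each obstruction produces a green arc $G$ starting at $k$ together with a red arc $R$ ending at $(k+1)_n$ whose geometric positions are constrained by the non crossing requirement of condition (a). Forcing $R$ and $G$ to avoid interior intersection then pins down either a shared source or a shared target between $R$ and $G$, and a length comparison at that shared endpoint gives a pair for which the counterclockwise relation contradicts the colour rule of condition (b) that already holds within $A$. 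A parallel sub argument disposes of the length $n$ loop cases, noting that at most one loop can occur in any admissible pattern since two loops at distinct vertices must intersect in the interior. Once every configuration is ruled out, $G_A$ is connected and a pseudo arc path between any two marked points exists.
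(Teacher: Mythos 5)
Your reduction is fine as far as it goes: if no pseudo arc path joins some pair of marked points, then some cyclically adjacent pair $k,(k+1)_n$ lies in different components, and in the unobstructed case the length-one arc $a\colon k\to(k+1)_n$ can indeed be adjoined (one correction: for an arc $c\in A$ with $t(c)=(k+1)_n=t(a)$, it is $a$ that is counterclockwise from $c$, so condition (b) forces $a$ to be \emph{red} and $c$ green -- the opposite of what you wrote for the shared-target case). The genuine gap is in the residual case, which is where essentially all of the content of the proposition lives. First, the obstructions are not all of the shape ``a green arc $G$ at $k$ together with a red arc $R$ at $(k+1)_n$'': a \emph{single} green arc with source $k$ (or a single red arc with target $(k+1)_n$) already blocks the sliver in both colours via condition (b), and then there is no second arc to play off against it. Second, even when two obstructing arcs $G$ and $R$ are present, non-crossing does \emph{not} pin down a shared source or target: in $D(5,l(1),\ldots,l(5))$ the pair $G\colon 1\to 4$ green and $R\colon 3\to 2$ red is admissible (disjoint, no shared endpoints, no condition applies between them) and blocks the sliver $1\to 2$ in both colours, yet contains no violation of condition (b). So the contradiction you want -- a pair of arcs of $A$ violating the colour rule -- simply does not exist in these configurations, and the argument cannot close locally at $k$ and $(k+1)_n$.

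What is actually needed, and what the paper does, is a global iteration: one fixes two components $C_1\subset[i_1,j_1]_n$ and $C_2\subset[i_2,j_2]_n$ with disjoint intervals and $C_2$ chosen maximal, and then repeatedly invokes maximality of $A$ to force a chain of arcs marching along one component (``there must be a green arc $G'$ with target $j_2$, hence a green arc $G''$ with target $s(G')$, \ldots'') until one is cornered into a position where a long arc spanning between the two components (such as $i_2\to i_1$ or $j_1\to j_2$) is compatible with $A$, contradicting maximality. The arcs being adjoined are not slivers, and the case analysis is organized around the two components rather than a single adjacent pair. Your sketch omits this iteration entirely, so the residual case -- which is the whole difficulty -- is not proved.
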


\begin{proof}
We say a pair of marked points are \emph{connected} by $A$ if there is a pseudo arc path between them. By Proposition \ref{prop:degreezero} we know that every marked point is connected to itself under this definition. Moreover, it is clear from the definition that the concatenation of two pseudo arc paths is again a pseudo arc path. Therefore we can partition the set of marked points into connected components. We will show that this partition consists of only a single connected component, which will imply the result.

Assume for a contradiction that there are two distinct connected components $C_1$ and $C_2$. Now choose marked points $i_1,j_1 \in C_1$ and $i'_2,j'_2 \in C_2$ so that $C_1 \subseteq [i_1,j_1]_n$ and $C_2 \subseteq [i'_2,j'_2]_n$. (The reason for using $i'_2$ and $j'_2$ as opposed to $i_2$ and $j_2$ will be made apparent in the following paragraphs.) In particular, there exists a pseudo arc path $s: i'_2\rightarrow j'_2$. Moreover, for any pseudo arc path $t:i\rightarrow j$ between marked points in $C_1$, there can be no intersections between $s$ and $t$. Indeed, $s$ and $t$ cannot intersect in the interior of the disk because $A$ is an admissible arc pattern. Likewise, $s$ and $t$ cannot intersect at a marked point because then there would be a pseudo arc path $i'_2\rightarrow j$, contradicting that $i'_2$ and $j$ are in distinct connected components.

One consequence of the previous paragraph is that $C_1$ and $C_2$ cannot both consist of a single marked point. Indeed, if $i_1 = j_1$, then Proposition \ref{prop:degreezero} implies that there is an arc in $A$ with $i_1$ as both its source and target. This arc would then necessarily intersect the analogous arc with $i'_2 = j'_2$ as both its source and target, a contradiction. Therefore, we assume without loss of generality that $i'_2 \neq j'_2$. We then see that at least one of $C_1 \cap (i'_2,j'_2)_n$ and $C_1\cap (j'_2,i'_2)_n$ must be empty. Indeed, if there is a pseudo arc path between a marked point in $(i'_2,j'_2)_n$ and one in $(j'_2,i'_2)_n$, then this pseudo arc path will necessarily intersect the one between $i'_2$ and $j'_2$.

If $C_1 \cap (i'_2,j'_2)_n = \emptyset$, then we drop the primes and write $i_2 := i'_2$ and $j_2 := j'_2$. Otherwise, we have $C_2 \cap (j_1,i_1)_n \neq \emptyset$, and so by symmetry $C_2 \cap (i_1,j_1)_n = \emptyset$. In this case, there then exist unique marked points $i_2 \in C_2\cap (j_1,j'_2]$ and $j_2 \in C_2 \cap [i'_2,i_1)$ so that $C_2 \subseteq [i_2,j_2]_n$. In either case, we then have that $C_1 \cap (i_2,j_2)_n = \emptyset = C_2\cap (i_1,j_1)_n$.

We now have two cases to consider.

\noindent Case 1: Assume there exists a green arc $G\in A$ with source $j_1$. This includes the case where $i_1 = j_1$ and there is a green arc from $j_1$ to $j_1$. It follows that $l(k) \geq (i_1-k)_n$ for all $k \in [j_1,i_1]_n$. Moreover, there cannot be an arc with source $j_2$ or target $i_2$, as it would intersect $G$. Likewise, the only arc that can possibly have target $i_1$ is $G$, as any red arc with target $i_1$ would be clockwise from $G$ or intersect $G$.

Now, by the maximality of $A$, there must be a green arc $G'\in A$ with target $j_2$. Otherwise a red arc $j_2 \rightarrow i_1$ would be compatible with $A$. Now any red arc with source $s(G')$ would need to have target in $(s(G'),j_2)_n$, but this would make it clockwise from $G'$. Thus we can assume the existence a green arc $G''\in A$ with target $s(G')$ as before. By iterating this process, we can assume that $s(G'') = i_2$. At this point, there is no way to prevent the compatibility of a red arc $i_2\rightarrow i_1$ with $A$, a contradiction.

\begin{center}
\begin{tikzpicture}[scale=0.55]
	\draw (0,0) circle[radius=2cm];
	\draw (-.1,-.1) to (.1,.1);
	\draw (-.1,.1) to (.1,-.1);
		\draw[very thick,dotted] plot [smooth] coordinates{(1.41,1.41) (-0.5,0) (1.73,-1)};
		\draw[very thick] plot [smooth] coordinates{(-1.4,-1.4) (-2,0)};
		\draw[very thick] plot [smooth] coordinates{(-1.4,1.4) (-2,0)};
		\draw[very thick,dashed] plot [smooth] coordinates{(-1.4,1.4) (-0.5,-1) (1.41,-1.41)};
		\node at (1.41,1.41) [draw,fill,circle,scale=0.4,label=right:$j_1$]{};
		\node at (-1.41,1.41) [draw,fill,circle,scale=0.4,label=left:$i_2$]{};
		\node at (-1.41,-1.41) [draw,fill,circle,scale=0.4,label=left:$j_2$]{};
		\node at (1.41,-1.41) [draw,fill,circle,scale=0.4,label=right:$i_1$]{};
		\node at (-2,0) [draw,fill,circle,scale=0.4,label=left:$s(G')$]{};
		\node at (1.73,-1) [draw,fill,circle,scale=0.4]{};
		
		\node at (3,0){};
		\node at (-3,0){};
\end{tikzpicture}
\end{center}

\noindent Case 2: Now suppose there does not exist a green arc with source $j_1$. Moreover, assume without loss of generality that there does not exist an arc $a \in A$ with source and target in $[i_2,j_2]_n$ such that $[i_1,j_1]_n \subset \mathrm{supp}(a)$. (Indeed, if such an arc exists, then it would intersect any arc $b \in A$ with source and target in $[i_1,j_1]_n$ such that $[i_2,j_2]_n \subseteq\mathrm{supp}(b)$.) In particular, there is no arc in $A$ with target $i_2$.

Then since a green arc $(i_2-1)_n \rightarrow i_2$ cannot be compatible with $A$, there must be a red arc $R\in A$ with source $i_2$ and $l((i_2-1)_n) \geq (t(R)-i_2-1)_n$. Now any green arc with target $t(R)$ would necessarily be counterclockwise from $R$. Thus since a green arc $(i_2-1)_n \rightarrow t(R)$ must be incompatible with $A$, there must exist a red arc $R'\in A$ with source $t(R)$ and $l((i_2-1)_n) \geq (t(R')-i_2-1)_n$. By iterating this process, we can assume that $t(R') = j_2$. At this point, there is no way to prevent the compatibility of a green arc $(i_2-1)_n \rightarrow j_2$ with $A$, a contradiction.

\begin{center}
\begin{tikzpicture}[scale=0.55]
	\draw (0,0) circle[radius=2cm];
	\draw (-.1,-.1) to (.1,.1);
	\draw (-.1,.1) to (.1,-.1);
		\draw[very thick,dashed] plot [smooth] coordinates{(-1.4,-1.4) (-2,0)};
		\draw[very thick,dashed] plot [smooth] coordinates{(-1.4,1.4) (-2,0)};
		\draw[very thick] (0,2)--(-1.4,-1.4);
		\node at (1.41,1.41) [draw,fill,circle,scale=0.4,label=right:$j_1$]{};
		\node at (-1.41,1.41) [draw,fill,circle,scale=0.4,label=left:$i_2$]{};
		\node at (-1.41,-1.41) [draw,fill,circle,scale=0.4,label=left:$j_2$]{};
		\node at (1.41,-1.41) [draw,fill,circle,scale=0.4,label=right:$i_1$]{};
		\node at (-2,0) [draw,fill,circle,scale=0.4,label=left:$t(R)$]{};
		\node at (0,2) [draw,fill,circle,scale=0.4,label=above:$(i_2-1)_n$]{};
		\node at (3,0){};
		\node at (-3,0){};
\end{tikzpicture}
\end{center}

We have shown that the marked points of $D(n,l(1),\ldots,l(n))$ cannot be partitioned into more than one connected component. This means there exists a pseudo arc path between any two marked points.
\end{proof}

\begin{prop}
\label{prop:simplepaths}
Let $A$ be a maximal arc pattern for $D(n,l(1),\ldots,l(n))$ and let $j$ be a marked point. Then there exists a proper arc path $j\rightarrow (j+1)_n$.
\end{prop}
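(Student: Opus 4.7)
The plan is to argue by case analysis on the local structure of $A$ near the vertices $j$ and $(j+1)_n$, using the maximality of $A$, Proposition \ref{prop:degreezero}, and the classification of pairs of (minimally) intersecting arcs from Lemmas \ref{lem:archomslong} and \ref{lem:archoms}. The starting observation is that the length--one arc $\alpha_j : j \to (j+1)_n$ corresponds to the simple brick $S_j = M(j,1)$, which exists unconditionally; so if $\alpha_j \in A$, then the singleton $(\alpha_j)$ is already the desired proper arc path.

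Suppose instead $\alpha_j \notin A$. Maximality of $A$ forces both the green and the red versions of $\alpha_j$ to fail admissibility against $A$. Since $\alpha_j$ has length one it can be drawn without entering the interior, so condition (a) cannot be violated; the obstruction must come from (b) or (c). A short enumeration of (b) and (c) at the vertices $j$ and $(j+1)_n$ shows that this forces at least one arc of $A$ to be incident to $j$ or to $(j+1)_n$ (either through a shared endpoint triggering (b), or through the length condition in (c)). I would organize the case analysis around which of these obstructing arcs exist, together with their colors.

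In each case, I would use the obstructing arcs as seeds to build the proper arc path. The prototypical situation is: a green arc $g \in A$ with $s(g) = j$ exists and necessarily has $l(g) \geq 2$, so $g$ overshoots $(j+1)_n$. Then $g$ is the first step of the candidate path, and I must return clockwise from $t(g)$ back to $(j+1)_n$ using reversed arcs from $A^*$. The existence of these reversed arcs is obtained by applying the statement to each consecutive pair of vertices on the clockwise segment from $t(g)$ to $(j+1)_n$, which is a strictly smaller instance by the inductive hypothesis (induction on $l(g)$, or more uniformly on the total number of vertices strictly between $j$ and $(j+1)_n$ counted counterclockwise). The symmetric situation with a red arc ending at $(j+1)_n$ is handled dually, and the residual cases coming from (c) are resolved by combining the two constructions.

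The delicate point, and the main obstacle, is ensuring the constructed sequence is \emph{proper} rather than merely a pseudo arc path: every subpath must be an arc path or the reverse of one, which amounts to checking that each intermediate module $M(v \to v')$ arising as a subpath is a brick (i.e.\ its length is bounded by $l(s(\cdot))$) and that it is the correct counterclockwise arc. This is what prevents a direct appeal to the preceding proposition, which only produces a pseudo arc path. I expect to handle properness by choosing the seed arcs (those supplied by the maximality obstruction) minimally with respect to length, so that each concatenation in the recursion automatically stays within the Loewy--length bounds, and by checking that consecutive forward/backward switches occur only at endpoints of arcs actually present in $A$, making every subpath either a prefix of a forward arc of $A$ or a prefix of a reversed arc of $A$.
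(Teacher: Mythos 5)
Your proposal has a genuine gap at its core: the induction is not well-founded. You propose to build the return path from $t(g)$ back to $(j+1)_n$ by ``applying the statement to each consecutive pair of vertices on the clockwise segment,'' but each such application is an instance of exactly the same proposition for a different marked point $k$, and nothing makes it a smaller instance. Induction on $l(g)$ fails because the seed arc obstructing the sub-instance at $(k,(k+1)_n)$ may be longer than $g$, or may even wrap around and contain $[j,j+1]_n$ in its support; and the alternative parameter you suggest, the number of vertices strictly between $j$ and $(j+1)_n$ counted counterclockwise, is identically zero for consecutive marked points, so it cannot decrease. The second unresolved issue is properness: even granting proper arc paths $p_k : k \to (k+1)_n$ for each intermediate $k$, the concatenation $g\, p_{t(g)-1}^{\,*}\cdots p_{(j+1)_n}^{\,*}$ is a priori only a pseudo arc path. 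Properness requires every subsequence of arcs --- including those beginning in the middle of one $p_k$ and ending in the middle of another --- to be an arc path or the reverse of one, which needs both a winding condition and Loewy-length bounds of the form $l(v) \geq$ (length of the subpath). Your plan to secure this by ``choosing the seed arcs minimally with respect to length'' is a hope, not an argument, and I do not see how to make it work without the additional structure below.

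The paper's proof supplies exactly the missing ingredient: a separate maximality argument (itself an iteration of the ``otherwise a compatible arc could be added'' reasoning, as in your degree-zero proposition) produces a single arc $a \in A$ with $[j,j+1]_n \subset \supp(a)$. This covering arc forces $l(k) \geq (t(a)-k)_n$ for every $k \in \supp(a)$, which is what guarantees that any subpath staying inside $\supp(a)$ corresponds to a brick. The paper then invokes the connectivity proposition to get pseudo arc paths $t(a) \to (j+1)_n$ and $s(a) \to j$, and truncates each at its last visit to $s(a)$ or $t(a)$ so that the tail remains in $(s(a),t(a))_n$; the truncated pieces are automatically proper because of the length bounds coming from $a$, and splicing them with $a$ (or $a^*$) yields the desired proper arc path $j \to (j+1)_n$. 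Your case analysis at the two endpoints only yields an arc incident to $j$ or $(j+1)_n$, which is strictly weaker than the existence of the covering arc $a$ and does not provide the uniform length bounds needed for properness.
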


\begin{proof}
Clearly, if there is an arc $j\rightarrow (j+1)_n$ in $A$ then we are done. Thus assume no such arc exists. We first reduce to the case that there exists an arc $a\in A$ with $[j,j+1]_n \subset \mathrm{supp}(a)$. Indeed, if there exists no such arc, then in particular, no arc in $A$ has source $j$ or target $(j+1)_n$. Thus by the maximality of $A$, there must be a green arc $G\in A$ with target $j$ and a red arc $R \in A$ with source $(j+1)_n$ such that $l(s(G)) \geq (j+1-s(G))_n$ and $l(j) \geq (t(R)-j)_n$. Otherwise an arc $j\rightarrow (j+1)_n$ would be compatible with $A$.

Now any green arc in $A$ with target $t(R)$ must have source in $(t(R),s(G))_n$, but such an arc would contain $[j,j+1]_n$ in its support. Thus we can assume there is no green arc in $A$ with target $t(R)$. Then as a green arc $j\rightarrow t(R)$ must be incompatible with $A$, there must exist a red arc $R'\in A$ with source $t(R)$ such that $l(j)\geq (t(R')-j)_n$. As before, we can assume there is no green arc with target $t(R')$ (or it would contain $[j,j+1]_n$ in its support). Thus there exists a red arc $R''\in A$ with source $t(R')$. Iterating this argument, we can assume that $t(R'') = s(G)$. Again, there can be no green arc with target $s(G)$. But then there is no way to prevent the compatibility of a red arc $s(G) \rightarrow (j+1)_n$ with $A$, a contradiction. We conclude that there exists some arc $a \in A$ such that $[j,j+1]_n \subset \mathsf{supp}(a)$.

\begin{center}
\begin{tikzpicture}[scale=0.55]
	\draw (0,0) circle[radius=2cm];
	\draw (-.1,-.1) to (.1,.1);
	\draw (-.1,.1) to (.1,-.1);
		\draw[very thick] plot [smooth] coordinates{(-1.73,1) (0,2)};
		\draw[very thick,dashed] plot [smooth] coordinates{(-1.73,-1) (0,-2)};
		\draw[very thick,dashed] plot [smooth] coordinates{(2,0) (0,-2)};
		\draw[very thick,dashed] plot [smooth] coordinates{(2,0) (0,2)};
		\draw[very thick,dashed] plot [smooth] coordinates {(0,2) (-1.73,-1)};
		\node at (-1.73,1) [draw,fill,circle,scale=0.4,label=left:$j$]{};
		\node at (-1.73,-1) [draw,fill,circle,scale=0.4,label=left:$j+1$]{};
		\node at (0,2) [draw,fill,circle,scale=0.4,label=$s(G)$]{};
		\node at (0,-2) [draw,fill,circle,scale=0.4,label=below:$t(R)$]{};
		\node at (2,0) [draw,fill,circle,scale=0.4,label=right:$t(R')$]{};
		\node at (3,0){};
		\node at (-3,0){};
\end{tikzpicture}
\end{center}

Now for $k \in \mathrm{supp}(a)$ we have $l(k) \geq (t(a)-k)_n$. Now by the previous proposition, there exists a pseudo arc path $(a_1,\ldots,a_k): t(a) \rightarrow (j+1)_n$. Thus there exists some index $i \leq k$ for which $s(a_i) = s(a)$ or $s(a_i) = t(a)$ and $(a_{i+1},\ldots,a_k)$ remains in the interval $(s(a),t(a))_n$. If $s(a_i) = s(a)$, then $(a_{i+1},\ldots,a_k)$ is a proper arc path $s(a) \rightarrow (j+1)_n$. Otherwise, $s(a_i) = t(a)\neq s(a)$ and $(a,a_i,\ldots,a_k)$ is a proper arc path $s(a) \rightarrow (j+1)_n$. We denote whichever proper arc path exists by $(a_{j+1}): s(a) \rightarrow (j+1)_n$.

Likewise, there exists a pseudo arc path $(b_1,\ldots,b_{k'}): s(a) \rightarrow j$. Again, there is some index $i'\leq k'$ for which $s(b_{i'}) = s(a)$ or $s(b_{i'}) = t(a)$ and $(b_{i'+1},\ldots,b_{k'})$ remains in the interval $(s(a),t(a))_n$. If $s(b_{i'}) = s(a)$, then $(b_{i'},\ldots, b_{k'})$ is a proper arc path $s(a)\rightarrow j$ and $(b_{k'}^*,\ldots b_{i'}^*,a_{j+1})$ is a proper arc path $j\rightarrow (j+1)_n$. Otherwise, $s(b_{i'}) = t(a)\neq s(a)$. In this case, $(a,b_{i'},\ldots,b_{k'})$ is a proper arc path $s(a) \rightarrow j$ and $(b_{k'}^*,\ldots,b_{i'}^*,a^*,a_{j+1})$ is a proper arc path $j\rightarrow (j+1)_n$.

\begin{center}
\begin{tikzpicture}[scale=0.55]
	\draw (0,0) circle[radius=2cm];
	\draw (-.1,-.1) to (.1,.1);
	\draw (-.1,.1) to (.1,-.1);
		\draw[very thick,dotted] plot [smooth] coordinates{(0,2) (-.5,0) (0,-2)};
		\node at (-1.73,1) [draw,fill,circle,scale=0.4,label=left:$j$]{};
		\node at (-1.73,-1) [draw,fill,circle,scale=0.4,label=left:$(j+1)_n$]{};
		\node at (0,2) [draw,fill,circle,scale=0.4,label=$s(a)$]{};
		\node at (0,-2) [draw,fill,circle,scale=0.4,label=below:$t(a)$]{};
		\node at (3,0){};
		\node at (-3,0){};
\end{tikzpicture}
\end{center}
\end{proof}

We are now ready to prove Theorem \ref{thm:pairwisesmc}. Our proof is modeled after that of \cite[Lem.~8.11]{garver_oriented} by Garver and McConville.

\begin{proof}[Proof of Theorem \ref{thm:pairwisesmc}]
It is clear that if $\mathcal{X}(A)$ is a 2-simple minded collection, then $A$ is maximal. Thus let $A$ be a maximal arc pattern and let $\mathfrak{T} = \mathsf{thick}(\mathcal{X}(A))$. We will show that $\mathfrak{T}$ contains all simple modules and therefore $\mathfrak{T} = \mathcal{D}^b(\mathsf{mod}\Lambda)$.

Let $S \in \mathsf{mod}\Lambda$ be simple and let $(p_1,\ldots,p_k)$ be a sequence of proper arc paths and reverses of proper arc paths. Then we say $(p_1,\ldots,p_k)$ is an \emph{admissible path sequence} for $S$ if
\begin{enumerate}
	\item $M(|p_j|) \in \mathfrak{T}$ for all $j$, where by $|p_j|$ we mean the proper arc path corresponding to $p_j$.
	\item Considered as a single pseudo arc path, $(p_1p_2\cdots p_k)$ is a proper arc path for $S$.
\end{enumerate}

We observe that every $a \in A$ is itself an admissible path sequence for $M(a)$. Moreover, by Proposition \ref{prop:simplepaths}, every simple module $S \in \mathsf{mod}\Lambda$ admits an admissible path sequence. We now prove that if $S$ has an admissible path sequence of length $k>1$, then $S$ has an admissible path sequence of length $k-1$. In particular, this means $S$ has an admissible path sequence of length 1 and hence $S \in \mathfrak{T}$.

Let $(p_1,\ldots p_k)$ be an admissible path sequence for $S$ of length $k > 1$. Then there exists some index $1 < j \leq k$ such that $\mathrm{supp}(|p_{j-1}|)\neq \mathrm{supp} (|p_j|)$.

If $\mathrm{supp}(|p_{j-1}|)\cap \mathrm{supp}(|p_j|) \neq \emptyset$ contains only a single marked point, then there is either an exact sequence
$$M(|p_{j-1}|)\hookrightarrow M(|p_{j-1}p_j|)\twoheadrightarrow M(|p_j|)$$
or
$$M(|p_{j}|)\hookrightarrow M(|p_{j-1}p_j|) \twoheadrightarrow M(|p_{j-1}|).$$
In either case, $M(|p_{j-1}p_j|) \in T$ and $(p_1,\ldots,p_{j-2},p_{j-1}p_j,p_{j+1},\ldots, p_k)$ is an admissible path sequence for $S$ of length $k-1$.

If $\mathrm{supp}(|p_{j-1}|)\cap \mathrm{supp}(|p_j|)\neq\emptyset$ contains more than a single marked point, then one of the following holds:
\begin{enumerate}
	\item $\mathrm{supp}(|p_{j-1}|) \subset \mathrm{supp}(|p_{j}|)$ and there is a monomorphism $f: M(|p_{j-1}|)\hookrightarrow M(p_j)$.
	\item $\mathrm{supp}(|p_{j-1}|) \subset \mathrm{supp}(|p_{j}|)$ and there is an epimorphism $f: M(|p_{j}|)\twoheadrightarrow M(p_{j-1})$.
	\item $\mathrm{supp}(|p_{j}|) \subset \mathrm{supp}(|p_{j-1}|)$ and there is a monomorphism $f: M(|p_{j}|)\hookrightarrow M(p_{j-1})$.
	\item $\mathrm{supp}(|p_{j}|) \subset \mathrm{supp}(|p_{j-1}|)$ and there is an epimorphism $f: M(|p_{j-1}|)\twoheadrightarrow M(p_j)$.
\end{enumerate}
In case (2) and (4), we see that $N:=\mathrm{ker} f \in \mathfrak{T}$. Likewise in case (1) and (3), we see that $N:=\mathrm{coker} f \in \mathfrak{T}$. In any case, we then have $M(|p_{j-1}p_j|) = N$ and $(p_1,\ldots,p_{j-2},p_{j-1}p_j,p_{j+1},\ldots,p_k)$ is an admissible path sequence for $S$ of length $k-1$.
\end{proof}

As previously stated, Theorem \ref{thm:nakayamapairwise} then follows immediately from Theorem \ref{thm:pairwisesmc} and the fact that admissible arc patterns are defined by pairwise compatibility conditions.

\section{Picture Groups and the Fundamental Group}
\label{sec:pi1}

In the case that $\Lambda$ is hereditary, the \emph{picture group} of $\Lambda$, denoted $G(\Lambda)$, is defined in \cite{igusa_picture}. It is then shown in \cite{igusa_signed} that the fundamental group of $\mathcal{B}\mathfrak{W}(\Lambda)$ is isomorphic to $G(\Lambda)$. The goal of this section is to extend the definition of the picture group and this result to non-hereditary algebras, and to construct a faithful group functor $\mathfrak{W}(\Lambda)\rightarrow G(\Lambda)$ in the case that $\Lambda$ is a Nakayama algebra.

\subsection{Picture Groups}

We begin by recalling the definition of the picture group in the hereditary case. We denote by $\alpha_1,\ldots,\alpha_n$ the positive simple roots of a hereditary algebra $\Lambda$. For a (not necessarily simple) positive root $\alpha$, we denote by $M_\alpha$ the corresponding module.

\begin{defthm}\cite[Def. 1.1.5, Def. 1.1.8, Thm. 2.2.1]{igusa_picture}
\label{defthm:hereditarypicture}
Let $\Lambda$ be a representation finite hereditary algebra. Then the picture group $G(\Lambda)$ has the following presentation.
\begin{itemize}
	\item $G(\Lambda)$ has one generator $X_S$ for every brick $S \in \mathsf{brick}\Lambda$.
	\item For each pair $S_1,S_2$ of Hom-orthogonal bricks such that $\mathrm{Ext}(S_1,S_2) = 0$, write $S_1 = M_\alpha$ and $S_2 = M_\beta$. We then have the relation
		$$X_{S_1}X_{S_2} = \prod_{T \in \mathsf{brick}(\mathsf{Filt}(S_1\sqcup S_2))}X_T$$
		where the order of the product is given by the increasing order of the ratio $r/s$ of the decomposition $T = M_{r\alpha+s\beta}$ (going from 0/1 to 1/0).
\end{itemize}
\end{defthm}

We extend this definition to the non-hereditary case by first recalling several facts about the lattice $\mathsf{tors}\Lambda$. We recall that an interval $[X,Y]$ in a lattice is called a \emph{polygon} if $(X,Y)$ consists of two disjoint nonempty chains.

\begin{thm}\cite[Thm. 4.16, Prop. 4.21]{demonet_lattice}\label{thm:polygons}
\
\begin{enumerate}[label=\upshape(\alph*)]
	\item The lattice $\mathsf{tors}\Lambda$ is \emph{polygonal}. That is
	\begin{enumerate}[label=\upshape(\roman*)]
		\item Given arrows $\mathcal{T}\rightarrow \mathcal{T}_1$ and $\mathcal{T}\rightarrow \mathcal{T}_2$ in $\mathsf{Hasse}(\mathsf{tors}\Lambda)$, the interval $[\mathcal{T}_1\wedge\mathcal{T}_2,\mathcal{T}]$ is a polygon.
		\item Given arrows $\mathcal{T}_1\rightarrow \mathcal{T}$ and $\mathcal{T}_2\rightarrow \mathcal{T}$ in $\mathsf{Hasse}(\mathsf{tors}\Lambda)$, the interval $[\mathcal{T},\mathcal{T}_1\vee\mathcal{T}_2]$ is a polygon.
	\end{enumerate}
	\item Let $\mathcal{P}$ be a polygon in $\mathsf{tors}\Lambda$. Then there exists a polygon $\mathcal{P}'$ with the same brick labels as $\mathcal{P}$ and a support $\tau$-tilting pair $M\sqcup P[1]$ with $rk(M\sqcup P[1]) = rk(\Lambda)-2$ such that $\mathcal{P}'$ has the form
\begin{center}
\begin{tikzpicture}
	\begin{scope}[thick,decoration={
	markings,
	mark=at position 0.5 with {\arrow[scale=1.5]{>}}}
	]
	\draw[postaction={decorate}] (0,-0.5)--(-1.5,-1.5) node [pos=0.65,anchor=south east]{$S_1$};
	\draw[postaction={decorate}] (0,-0.5)--(1.5,-1.5) node [pos=0.65,anchor=south west]{$S_2$};
	\draw[postaction={decorate}] (-1.5,-1.5)--(-1.5,-2.5) node [midway,anchor=east]{$T_1$};
	\draw[postaction={decorate}] (1.5,-1.5)--(1.5,-2.5) node [midway,anchor=west]{$T'_1$};
	\draw[postaction={decorate}] (-1.5,-3)--(-1.5,-4) node [midway,anchor=east]{$T_k$};
	\draw[postaction={decorate}] (1.5,-3)--(1.5,-4) node [midway,anchor=west]{$T'_l$};
	\draw[postaction={decorate}] (-1.5,-4)--(0,-5) node [pos=0.4,anchor=north east]{$S_2$};
	\draw[postaction={decorate}] (1.5,-4)--(0,-5) node [pos=0.4,anchor=north west]{$S_1$};
	\end{scope}
	
	\node at (0,-0.25) [draw,fill=white]{$\lperp{(\tau M)}\cap P^\perp$};
	\node at (0,-5) [draw,fill=white]{$\mathsf{Fac} M$};
	
	\node at (-1.5,-1.5) [draw,fill,circle,scale=0.6]{};
	\node at (1.5,-1.5) [draw,fill,circle,scale=0.6]{};
	\node at (-1.5,-4) [draw,fill,circle,scale=0.6]{};
	\node at (1.5,-4) [draw,fill,circle,scale=0.6]{};
	
	\node at (-1.5,-2.7)[rotate=90]{\small $\cdots$};
	\node at (1.5,-2.7)[rotate=90]{\small $\cdots$};
\end{tikzpicture}
\end{center}
where each arrow is labeled with the corresponding brick label.
	\item In the above polygon, we have $J(M\sqcup P[1]) = \mathsf{Filt}(S_1\sqcup S_2)$. Moreover, $\mathsf{brick}(\mathsf{Filt}(S_1\sqcup S_2)) =\{S_1,S_2,T_1,\ldots,T_k,T'_1,\ldots,T'_l\}$, with each brick appearing once in this collection.
\end{enumerate}
\end{thm}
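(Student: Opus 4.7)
The plan is to reduce the entire statement to a rank-$2$ computation via Jasso's reduction (Theorem \ref{thm:basicWide}). Given arrows $\T \to \T_1$ and $\T \to \T_2$ in $\Hasse(\tors\Lambda)$, the bijection $\stt\Lambda \leftrightarrow \tors\Lambda$ translates these into a pair of left mutations of the support $\tau$-tilting pair $U$ corresponding to $\T$, at distinct indecomposable summands $N_1, N_2 \in U$. Writing $U = N_1 \sqcup N_2 \sqcup V$, the summand $V =: M\sqcup P[1]$ is an almost complete $\tau$-rigid pair of rank $n-2$, and the Jasso category $J(V)$ is equivalent to $\mods\Lambda'$ for some $\tau$-tilting finite algebra $\Lambda'$ of rank $2$. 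A symmetric setup produces the dual reduction for two arrows $\T_1 \to \T$ and $\T_2 \to \T$.

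The key step is to establish an order-preserving bijection between the interval $[\T_1 \wedge \T_2, \T]$ in $\tors\Lambda$ and the lattice $\tors(J(V)) \cong \tors(\mods\Lambda')$. On the level of support $\tau$-tilting pairs, this amounts to showing that the pairs in this interval are exactly those of the form $V \sqcup W$ for $W \in \stt(J(V))$, and that mutations at summands of $W$ agree whether computed in $\mods\Lambda$ or in $J(V)$. Granted this reduction, parts (a) and (b) follow from a direct analysis of rank-$2$ $\tau$-tilting finite algebras: the support $\tau$-tilting pairs of $\Lambda'$ extend from $P_1\sqcup P_2$ (the top) down two disjoint mutation chains that merge at $P_1[1]\sqcup P_2[1]$ (the bottom), with the two outermost mutations at the top labeled by the simple modules $S_1, S_2$ of $\mods\Lambda'$. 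This polygon structure in $\tors(\mods\Lambda')$ transports back to give the polygon structure of $[\T_1\wedge\T_2,\T]$ inside $\tors\Lambda$, with the interior chains of length $k$ and $l$ determined by the mutation sequences in $\Lambda'$. Moreover, the identification $J(V) = \Filt(S_1\sqcup S_2)$ is immediate since $\mods\Lambda'$ is generated as a wide subcategory by its two simples.

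For the enumeration of bricks in part (c), I would apply the brick labeling machinery (the Definition-Theorem preceding Theorem \ref{thm:lastmorphisms}, which reproduces results of Asai and of Demonet--Iyama--Reading--Reiten--Thomas): arrows in $\Hasse(\tors\Lambda')$ are bijectively labeled by bricks of $\mods\Lambda'$, so the $k+l+2$ arrows of the polygon contribute exactly $k+l+2$ distinct bricks, and these exhaust $\brick(\mods\Lambda') = \brick(\Filt(S_1\sqcup S_2))$ since every brick of a rank-$2$ $\tau$-tilting finite algebra labels some Hasse arrow. The main obstacle lies in the first step: one must verify carefully that the lattice operations $\wedge$ and $\vee$ in $\tors\Lambda$ restrict compatibly to the interval $[\T_1\wedge\T_2,\T]$ and correspond, under Jasso's reduction, to the operations in $\tors(J(V))$. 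Once this compatibility is in hand, both the polygon shape and the brick enumeration reduce to a finite case analysis of rank-$2$ algebras, which is transparent.
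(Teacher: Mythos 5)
This statement is not proved in the paper at all: it is quoted verbatim from the literature, with the attribution \cite[Thm.~4.16, Prop.~4.21]{demonet_lattice} doing all the work. So there is no ``paper's proof'' to compare against line by line; what you have written is a reconstruction of the argument in the cited source, and it follows that source's strategy quite closely. The key lemma you identify --- the label-preserving lattice isomorphism between the interval $[\Fac M,\ \lperp{(\tau M)}\cap P^\perp]$ and $\tors(J(M\sqcup P[1]))$ --- is exactly the statement the paper itself later imports as Lemma \ref{lem:welldef}(a) (again from \cite{demonet_lattice}), so your reduction to a rank-$2$ computation is the intended route rather than an alternative one.

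That said, your sketch leans on a few points that would need to be made explicit before it is a proof. First, the identification of $[\T_1\wedge\T_2,\T]$ with the interval $[\Fac M,\ \lperp{(\tau M)}\cap P^\perp]$ requires checking that $\T_1$ and $\T_2$ lie in that interval (they do, because the mutated pairs still contain $M\sqcup P[1]$ as a summand) and then that the meet taken in $\tors\Lambda$ agrees with the meet taken inside the interval; the latter is automatic for an interval of a lattice, but you should say so, since this is precisely the ``main obstacle'' you defer. Second, in part (a) the claim that $\Hasse(\tors\Lambda')$ for a rank-$2$ $\tau$-tilting finite algebra is a single cycle (two disjoint chains joining top to bottom) deserves a sentence: every support $\tau$-tilting pair of $\Lambda'$ admits exactly two mutations, so every vertex of the exchange graph has degree $2$, and a finite connected lattice with this property is a polygon. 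Third, in part (c) your count is slightly off: the polygon has $k+l+4$ arrows, not $k+l+2$, since $S_1$ and $S_2$ each label two of them; the real content is that the remaining labels $T_1,\dots,T_k,T'_1,\dots,T'_l$ are pairwise distinct and distinct from $S_1,S_2$, and that every brick of $\mods\Lambda'$ occurs as a label --- both facts are in the brick-labeling literature you invoke but are not consequences of the mere existence of the labeling. None of these is a fatal gap, but each is a place where the cited source does genuine work that your outline compresses into a phrase.
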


We observe that there may be multiple torsion classes $\mathcal{T}$ for which $S_1$ and $S_2$ label arrows $\mathcal{T}\rightarrow\mathcal{T}_1$ and $\mathcal{T}\rightarrow\mathcal{T}_2$ (resp. $\mathcal{T}_1\rightarrow\mathcal{T}$ and $\mathcal{T}_2\rightarrow\mathcal{T}$). However, as a consequence of (c), the labeling of the polygon $[\mathcal{T}_1\wedge\mathcal{T}_2,\mathcal{T}]$ (resp. $[\mathcal{T},\mathcal{T}_1\vee\mathcal{T}_2]$) depends only on $S_1$ and $S_2$, not on $\mathcal{T}$. Thus we refer to any polygon which occurs in $\mathsf{Hasse}(\mathsf{tors}\Lambda)$ with this labeling as $\mathcal{P}(S_1,S_2)$. We refer to the sequences $(S_1,T_1,\ldots,T_k,S_2)$ and $(S_2,T'_1,\ldots,T'_l,S_1)$ as the \emph{sides} of the polygon $\mathcal{P}(S_1,S_2)$. We now propose the following extension of the definition of the picture group.

\begin{defthm}
\label{defthm:picture}
The \emph{picture group} $G(\Lambda)$ is defined to have the following presentation.
\begin{itemize}
	\item $G(\Lambda)$ has one generator $X_S$ for every brick $S \in \mathsf{brick}\Lambda$.
	\item For each polygon $\mathcal{P}(S_1,S_2)$ in $\mathsf{tors}\Lambda$, we have the \emph{polygon relation}
$$X_{S_1}X_{T_1}\cdots X_{T_k}X_{S_2} = X_{S_2}X_{T'_1}\cdots X_{T'_l}X_{S_1}$$
	where $(S_1,T_1,\ldots,T_k,S_2)$ and $(S_2,T'_1,\ldots,T'_l,S_1)$ are the sides of $\mathcal{P}(S_1,S_2)$.
	\end{itemize}
In the hereditary case, this agrees with Definition-Theorem \ref{defthm:hereditarypicture}.
\end{defthm}

\begin{proof}
In the case that $\Lambda$ is a representation-finite hereditary algebra, our relations are the inverse of the relations given in Definition-Theorem \ref{defthm:hereditarypicture} and hence the groups are isomorphic. Indeed, as in \cite[Example 2.2.2]{igusa_picture}, we need only consider the six cases where $\Lambda = KQ$ and $Q$ is Dynkin with two vertices. For example, if
$$Q = 2\xleftarrow{(1,2)}1 \cong B_2 \cong C_2$$
then $\mathsf{brick}\Lambda = \{M_{1,0},M_{0,1},M_{1,1},M_{2,1}\}$ and $\mathsf{Hasse}(\mathsf{tors}\Lambda)$ consists of two chains:
$$\mathsf{mod}\Lambda\xrightarrow{M_{1,0}}\mathsf{Filt}\mathsf{Fac} M_{2,1}\xrightarrow{M_{2,1}}\mathsf{Filt}\mathsf{Fac} M_{1,1}\xrightarrow{M_{1,1}}\mathsf{Filt} M_{0,1}\xrightarrow{M_{0,1}}0$$
and
$$\mathsf{mod}\Lambda\xrightarrow{M_{0,1}}\mathsf{Filt} M_{1,0}\xrightarrow{M_{1,2}}0.$$
Thus in this case, the two definitions yield isomorphic groups. The other five cases follow from identical reasoning.
\end{proof}

Recall from \cite[Thm. 2.12, 2.13]{li_maximal} that \emph{maximal green sequences} correspond to sequences $(S_1,\ldots,S_k)$ of bricks labeling a (directed) path $\mathsf{mod}\Lambda\rightarrow 0$ in $\mathsf{Hasse}(\mathsf{tors}\Lambda)$. In the hereditary case, the second author and Todorov explore the relationship between picture groups and maximal green sequences in \cite{igusa_pictureMGS}. We now give three additional presentations of the picture group $G(\Lambda)$ that will be useful in proving our main results.

\begin{prop}\
\label{prop:presentations}
\smallskip
\begin{enumerate}[label=\upshape(\alph*)]
	\item $G(\Lambda)$ is generated by $\{X_S|S\in\mathsf{brick}\Lambda\}$ with a relation
	$$X_{S_1}\cdots X_{S_k} = X_{S'_1}\cdots X_{S'_l}$$
	if there exist torsion classes $\mathcal{T},\mathcal{T}'\in\mathsf{tors}\Lambda$ such that $(S_1,\ldots,S_k)$ and $(S'_1,\ldots S'_l)$ label (directed) paths $\mathcal{T}\rightarrow \mathcal{T}'$ in $\mathsf{Hasse}(\mathsf{tors}\Lambda)$.
	\item $G(\Lambda)$ is generated by $\{X_S|S\in\mathsf{brick}\Lambda\}$ with a relation
	$$X_{S_1}\cdots X_{S_k} = X_{S'_1}\cdots X_{S'_l}$$
	if $(S_1,\ldots,S_k)$ and $(S'_1,\ldots S'_l)$ correspond to maximal green sequences for~$\Lambda$.
	\item $G(\Lambda)$ is generated by $\{X_S|S\in\mathsf{brick}\Lambda\}\cup\{g_\mathcal{T}|\mathcal{T}\in\mathsf{tors}\Lambda\}$
	with a relation
	$$g_{\mathcal{T}} = X_Sg_{\mathcal{T}'}$$
	if there is an arrow $\mathcal{T}\xrightarrow{S}\mathcal{T}'$ in $\mathsf{Hasse}(\mathsf{tors}\Lambda)$ and the relation
	$$g_0 = e.$$
\end{enumerate}
\end{prop}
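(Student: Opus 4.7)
The plan is to compare each of the three candidate presentations with Definition-Theorem \ref{defthm:picture}, which I denote by $G$; write $G_a, G_b, G_c$ for the groups presented by (a), (b), (c) respectively. In each case the generators are either identical to those of $G$ or redundant, so the work is to show that the two relation sets define the same normal subgroup of the free group.

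For (a), every polygon relation of Definition-Theorem \ref{defthm:picture} is already a relation of the form appearing in (a), since the two sides of a polygon $\mathcal{P}(S_1, S_2)$ are two directed paths in $\Hasse(\tors\Lambda)$ between the top and bottom of that polygon. Hence the map $X_S \mapsto X_S$ descends to a surjection $G \twoheadrightarrow G_a$. The substantive direction is to check that every relation of $G_a$ is already a consequence of the polygon relations, i.e. that whenever two directed paths in $\Hasse(\tors\Lambda)$ share both endpoints, the associated words in the $X_S$'s coincide in $G$. The key input is Theorem \ref{thm:polygons}(a), which asserts that $\tors\Lambda$ is polygonal. I would then invoke the lattice-theoretic fact that in a finite polygonal lattice, any two directed paths with common endpoints are connected by a sequence of \emph{polygon flips}, each replacing one side of some polygon by the other; every such flip alters the corresponding word only by a single polygon relation. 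Flip-connectivity can be argued by induction on the length of the interval $[\T', \T]$: if two paths from $\T$ to $\T'$ start with distinct arrows $\T \to \T_1$ and $\T \to \T_2$, the polygon $[\T_1 \wedge \T_2, \T]$ supplied by polygonality contains both of these arrows as initial edges of its two sides, so a single polygon flip converts the first arrow of one path into the first arrow of the other, after which the induction applies on a shorter sub-interval. I expect this combinatorial rewriting to be the main obstacle.

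For (b), the relations in (b) are a subset of those in (a), so there is a natural surjection $G_a \twoheadrightarrow G_b$. For the reverse direction, given two directed paths $p_1, p_2 : \T \to \T'$ in $\Hasse(\tors\Lambda)$, I would extend them by a common directed prefix $q : \mods\Lambda \to \T$ and a common directed suffix $r : \T' \to 0$; such extensions exist because $\tors\Lambda$ is a finite lattice with top $\mods\Lambda$ and bottom $0$, so every element lies on some maximal chain from top to bottom. The concatenations $q p_1 r$ and $q p_2 r$ are maximal green sequences, so their label words are equal in $G_b$; cancelling the common prefix and suffix in the group then yields $X(p_1) = X(p_2)$, establishing the remaining relations of (a) and hence the isomorphism $G_a \cong G_b$.

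For (c), I would apply a Tietze transformation. Working in $G_b$, define $g_\T := X_{S_1} \cdots X_{S_k}$, where $(S_1, \ldots, S_k)$ labels any directed path $\T \to 0$ in $\Hasse(\tors\Lambda)$; by (b) this element is independent of the chosen path, and the empty path gives $g_0 = e$. For each arrow $\T \xrightarrow{S} \T'$, prepending $S$ to a path $\T' \to 0$ produces a path $\T \to 0$, so the relation $g_\T = X_S g_{\T'}$ holds automatically. This realises presentation (c) as the result of adjoining the symbols $g_\T$ to (b) together with their defining equations, which is a standard Tietze move preserving the group up to isomorphism. Conversely, starting from (c) one eliminates each $g_\T$ recursively using the arrow relations and $g_0 = e$; two different choices of elimination for $g_{\mods\Lambda}$ correspond to two maximal green sequences, and equating the resulting words recovers the relations of (b). Combined with the previous steps, this gives the chain $G \cong G_a \cong G_b \cong G_c$ asserted in the proposition.
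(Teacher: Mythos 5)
Your argument is correct, but it is considerably more self-contained than the paper's. The paper disposes of (a) in one line by citing Reading's Lemma~9-6.3 on polygonal lattices, which is precisely the flip-connectivity statement you set out to prove by hand: any two directed paths with common endpoints in the Hasse quiver of a finite polygonal lattice are related by a sequence of polygon moves. Your induction on the interval $[\T',\T]$ is the standard proof of that lemma, though as literally written it is slightly compressed: when $p_1$ and $p_2$ begin with distinct arrows $\T\rightarrow\T_1$ and $\T\rightarrow\T_2$, the single flip is applied not to $p_1$ or $p_2$ themselves but to an auxiliary pair of paths obtained by following the two sides of the polygon $[\T_1\wedge\T_2,\T]$ and then a common directed path from $\T_1\wedge\T_2$ to $\T'$; one then needs the inductive hypothesis twice, on $[\T',\T_1]$ and on $[\T',\T_2]$, to compare each $p_i$ with its auxiliary path. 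This is routine to repair and you flag it as the main obstacle, so I would not call it a gap. The paper declares (b) and (c) to ``follow immediately from (a)''; your prefix/suffix-extension argument for (b) and the Tietze-transformation argument for (c) are exactly the intended justifications, and they are sound (note that extending to a common prefix $\mods\Lambda\rightarrow\T$ and suffix $\T'\rightarrow 0$ uses only that $\tors\Lambda$ is a finite lattice). One small slip: since the relations of (b) form a subset of those of (a), the induced surjection goes $G_b\twoheadrightarrow G_a$, not the other way around; the substantive half you then prove, namely that every relation of (a) is a consequence of those of (b), is the correct remaining step, so this is purely a notational reversal. In summary, what the citation buys the paper is brevity; what your version buys is a complete, elementary proof that does not depend on the lattice-theory literature.
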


\begin{proof}
(a) is an immediate consequence of \cite[Lem. 9-6.3]{reading_lattice}. (b) and (c) follow immediately from~(a).
\end{proof}

\begin{rem}
	We could also give a presentation of the picture group using the wall and chamber structure of \cite{brustle_wall}. This construction is the one originally given in the hereditary case (see \cite{igusa_picture,loday_homotopical}). In this presentation, the generators correspond to bricks and the relations make the product of the labels of walls traversed by any oriented cycle in $\mathbb{R}^n$ transverse to these walls trivial. This is equivalent to the definition in terms of polygon relations due to the duality of $\mathsf{Hasse}(\mathsf{tors}\Lambda)$ with the wall and chamber structure (see \cite[Prop. 4.5]{brustle_wall}).
\end{rem}

\subsection{The Fundamental Group}

Our next goal is to show that the fundamental group of the picture space $\mathcal{B}\mathfrak{W}(\Lambda)$ is isomorphic to the picture group $G(\Lambda)$. The argument is similar to those found in \cite[Section 4]{igusa_category} and \cite[Section 4.1, 4.2]{igusa_signed}; however, due to differences in notation, we include many of the details here. Readers unfamiliar with the definitions of the \emph{star} and \emph{link} of vertex in a simplicial complex and the method for computing the fundamental group of a CW complex are referred to \cite[Sec. 1.2]{munkres_elements} and \cite[Sec 1.2]{hatcher_algebraic}.

Given a wide subcategory $W \in \mathsf{wide}\Lambda$, write $W = \mathsf{Filt}(\mathcal{S})$ with $\mathcal{S} \in \mathsf{sbrick}\Lambda$. Then the \emph{rank} of $W$ is defined as $\mathsf{rk}(\mathcal{S})$. We recall the following, which can be deduced from \cite[Thm. 5.2]{demonet_tilting} or \cite[Prop. 3.15]{brustle_wall}.

\begin{lem}\label{lem:sphere}
	Let $W$ be a wide subcategory of rank $k\neq 0$. Let $\Sigma(W)$ be the simplicial complex with vertices the (isoclasses of) indecomposable support $\tau$-rigid pairs for $W$ so that a set of vertices spans a simplex in $\Sigma(W)$ if and only if their direct sum is a support $\tau$-rigid pair. Then $\Sigma(W)$ is homeomorphic to a $(k-1)$-sphere and each morphism $[U]:W\rightarrow0$ corresponds to the $(k-1)$-simplex whose vertices are the direct summands of $U$.
\end{lem}

The following proposition and its proof are virtually identical to \cite[Sec. 4.1]{igusa_signed} and \cite[Thm 4.4]{igusa_category}. We give an example following the proof.

\begin{prop}
\label{prop:CW}
The classifying space $\mathcal{B}\mathfrak{W}(\Lambda)$ is a $\mathsf{rk}(\Lambda)$-dimensional CW-complex having one cell $e(W)$ of dimension $k$ for every wide subcategory $W$ of rank $k$. The $k$-cell $e(W)$ is the union of the factorization cubes of all morphisms $W\rightarrow 0$.
\end{prop}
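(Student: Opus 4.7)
The strategy is to exhibit an explicit CW structure on $\B\W(\Lambda)$ by bundling together factorization cubes according to their source. Specifically, define
$$e(W) \;:=\; \bigcup_{[U]\colon W \to 0}\; \B Fac[U] \;\subseteq\; \B\W(\Lambda),$$
and let $\mathring{e}(W) := e(W) \setminus \bigcup_{W' \subsetneq W} e(W')$ be the candidate open cell. By Corollary \ref{cor:cubeembedding}(a), each cube $\B Fac[U]$ in the defining union is an honest $k$-cube, where $k=rk(U)=rk(W)$; thus $e(W)$ is $k$-dimensional and the whole space is at most $rk(\Lambda)$-dimensional, with the top cell realized by $e(\mods\Lambda)$.

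To see that the $e(W)$ cover $\B\W(\Lambda)$, note that any point of the classifying space lies in $\B Fac[V]$ for some morphism $[V]\colon W_1 \to W_2$. Choosing any $[V']\colon W_2 \to 0$ (which exists because $\Lambda$ is $\tau$-tilting finite) yields $[V'\circ V]\colon W_1 \to 0$ of rank $rk(V)+rk(V')$, and by Corollary \ref{cor:cubeembedding} the cube $\B Fac[V]$ embeds as a face of $\B Fac[V'\circ V]\subseteq e(W_1)$. The open cells $\mathring{e}(W)$ are pairwise disjoint by construction, and one checks that a cube $\B Fac[U]$ for $[U]\colon W \to 0$ meets $e(W')$ (for $W' \subsetneq W$) exactly along the subcubes whose "first-factor part" is a proper subsummand of $U$ — that is, along the $(k-1)$-dimensional faces opposite the $W$-corner.

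The heart of the argument is to verify $\mathring{e}(W) \cong $ open $k$-disk, which I expect to be the principal obstacle. The picture is as follows. Each $k$-cube $\B Fac[U]$ has a distinguished corner at the trivial factorization $W \xrightarrow{\id} W \xrightarrow{[U]} 0$, corresponding to the empty subset under $Fac[U] \cong \mathcal{I}^k$. Removing from each cube the closed faces on which some summand of $U$ has been "fully factored" removes precisely the part of $e(W)$ that lies in smaller cells $e(W')$; what remains in each cube is the open star of the $W$-corner, which is an open $k$-disk. These open stars glue to one another along open faces adjacent to the $W$-corner, and by Theorem \ref{thm:bigbijection} the edges at this corner in the cube $\B Fac[U]$ are indexed by the indecomposable direct summands of $U$. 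Hence the combinatorial pattern along which the open stars glue is precisely the star of the cone point in the open cone on $\Sigma(W)$. By Lemma \ref{lem:simplicialCW}, $\Sigma(W) \cong S^{k-1}$, so this open cone is homeomorphic to an open $k$-disk, giving $\mathring{e}(W) \cong \mathring{D}^k$.

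Finally, the attaching map for $e(W)$ is the composition of the homeomorphism $D^k \to \overline{\mathring{e}(W)}$ with the inclusion into $\B\W(\Lambda)$; its image on $S^{k-1}=\partial D^k$ lies in the lower skeleton $\bigcup_{W'\subsetneq W} e(W')$ by the preceding description. Together with the covering and disjointness statements, this produces the asserted CW structure. The main technical care needed is in making the "open star" identification rigorous — tracking precisely which faces of each factorization cube are shared with which neighboring cube, and matching this incidence data with the simplicial structure of $\Sigma(W)$.
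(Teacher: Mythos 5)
The proposal is correct and takes essentially the same route as the paper: both define $e(W)$ as the union of the factorization cubes of morphisms $W\to 0$ and invoke Lemma \ref{lem:simplicialCW} (that $\Sigma(W)\cong S^{k-1}$, with maximal simplices indexed by the morphisms $W\to 0$) to identify this union with the cone on a $(k-1)$-sphere, hence a $k$-disk, which meets lower-dimensional cells only along the faces not containing the vertex $W$. Your explicit open-star/gluing discussion is just a more detailed rendering of the paper's appeal to the forward link of $W$.
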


\begin{proof}
	For simplicity, we identify each morphism in $\mathfrak{W}(\Lambda)$ with its factorization cube in $\mathcal{B}\mathfrak{W}(\Lambda)$. Let $W \in \mathsf{wide}\Lambda$ be a wide subcategory of rank $k$ and let $e(W)$ be the union of the factorization cubes of all morphisms $W \rightarrow 0$. These morphisms are in bijection with elements of $\stt W$.  An example of a cell $e(W)$ for a wide subcategory of rank 2 is shown in Figure \ref{fig:homotopy}. We must show that $e(W)$ is a disk of dimension $k$ attached to lower dimensional cells along its boundary.
	
	Let $X(W) := \bigsqcup_{U \in \stt(W)}[U]$. We remark that a (cubical) face $[U'] \subseteq [U]$ corresponds to a morphism $[U']:W' \rightarrow W''$ for some $W'' \subseteq W' \subseteq W$. We define an equivalence relation $\sim_1$ on $X(W)$ by identifying faces corresponding to the same morphism. Thus by definition, $e(W) \equiv X(W)/\sim_1$.
	
	Now consider the under category $W\backslash \mathfrak{W}(\Lambda)$, whose objects are pairs $(W',[U])$ with $W' \in \mathsf{wide}\Lambda$ and $[U] \in \mathrm{Hom}_{\mathfrak{W}(\Lambda)}(W,W')$, and whose morphisms $(W',[U])\rightarrow (W'',[V])$ are morphisms $[U']:W' \rightarrow W''$ in $\mathfrak{W}(\Lambda)$ so that $[V] = [U']\circ[U]$. By Lemma \ref{lem:cubeembedding}(b), there is a unique morphism $(W',[U]) \rightarrow (W'',[V])$ in $W\backslash \mathfrak{W}(\Lambda)$ if and only if $U \in \mathsf{add} V$, otherwise there are no such morphisms. This gives $W\backslash \mathfrak{W}(\Lambda)$ the structure of a poset category. The classifying space of this category is the cone (with cone point $(W,[0])$) on the simplicial complex $\Sigma W$ from Lemma \ref{lem:sphere}. In particular, this implies that $\mathcal{B} W\backslash \mathfrak{W}(\Lambda)$ is homeomorphic to a $k$-disk. Geometrically, we also see that $\mathcal{B} W\backslash \mathfrak{W}(\Lambda) \equiv X(W)/\sim_{2}$, where $\sim_2$ only identifies faces corresponding to the same morphism with source $W$ (and their subfaces). Let $\sim_3$ be the equivalence relation on $\mathcal{B} W\backslash \mathfrak{W}(\Lambda)$ which identifies all remaining faces corresponding to the same morphism, so $e(W) \equiv \mathcal{B} W\backslash \mathfrak{W}(\Lambda)/\sim_3$.
	
	Consider again the vertex $W = (W,[0]) \in \mathcal{B} W\backslash \mathfrak{W}(\Lambda)$. By Lemma  \ref{lem:sphere} and the discussion above, the link of $W$ is homeomorphic to a $(k-1)$ sphere. Thus $W$ is an interior vertex of $\mathcal{B} W\backslash \mathfrak{W}(\Lambda)$. Moreover, if $v = (W',[U])$ is another vertex, then the star of $v$ is homeomorphic to the join of $[U]$ and $\mathcal{B} W'\backslash \mathfrak{W}(\Lambda)$. Thus the link of $v$ is homeomorphic to the join of a ($rk[U]-1$)-disk and a ($k- rk[U]$-1)-sphere, which is a $(k-1)$-disk. This means $v$ is a boundary vertex of $\mathcal{B} W\backslash \mathfrak{W}(\Lambda)$. Now any simplex in $\mathcal{B} W\backslash \mathfrak{W}(\Lambda)$ which contains the vertex $W$ is necessarily in its own equivalence class (under $\sim_3$). Thus the only identifications made by $\sim_3$ are of simplices containing only boundary vertices; that is, simplices on the boundary of $\mathcal{B} W\backslash \mathfrak{W}(\Lambda)$. Moreover, these identified simplices are equivalence classes of (factorizations of) morphisms of rank strictly less than $k$; i.e., $e(W)$ is attached to lower dimensional cells.
\end{proof}

\begin{ex}\label{ex:cell}
Let $\Lambda = K(2\rightarrow 1)$. Then $\mathcal{B}\mathfrak{W}(\Lambda)$ has a single 0-cell, three 1-cells, and a single 2-cell. The 0-cell corresponds to the wide subcategory 0. Moreover, for each of the three bricks $T \in \mathsf{brick}\Lambda$, the 1-cell $e(\mathsf{Filt} T)$ corresponds to the pair of morphisms $0\xleftarrow{[T]} \mathsf{Filt} T \xrightarrow{[T[1]]}0$. Finally, the 2-cell $e(\mathsf{mod} \Lambda)$ corresponds to the under category $(\mathsf{mod}\Lambda\backslash\mathfrak{W}(\Lambda))$ and is shown in Figure~\ref{fig:cell}.
	
\begin{figure}
{\scriptsize
\begin{tikzpicture}[scale=0.7]
	\begin{scope}[thick,decoration={
	markings,
	mark=at position 0.5 with {\arrow[scale=1.5]{>}}}
	]
		\draw[postaction={decorate}] (0,0)--(6,0);
		\draw[postaction={decorate}] (0,0)--(3,3);
		\draw[postaction={decorate}] (0,0)--(3,-3);
		\draw[postaction={decorate}] (0,0)--(-3,-2);
		\draw[postaction={decorate}] (0,0)--(-3,2);
		\draw[postaction={decorate}] (0,0)--(6,2);
		\draw[postaction={decorate}] (0,0)--(6,-2);
		\draw[postaction={decorate}] (0,0)--(-6,0);
		\draw[postaction={decorate}] (0,0)--(0,4);
		\draw[postaction={decorate}] (0,0)--(0,-4);

		\draw[postaction={decorate}] (6,0)--(6,2);
		\draw[postaction={decorate}] (6,0)--(6,-2);
		\draw[postaction={decorate}] (-3,-2)--(0,-4);
		\draw[postaction={decorate}] (-3,-2)--(-6,0);
		\draw[postaction={decorate}] (-3,2)--(-6,0);
		\draw[postaction={decorate}] (-3,2)--(0,4);

	\end{scope}
	
	\begin{scope}[thick,decoration={
	markings,
	mark=at position 0.55 with {\arrow[scale=1.5]{>}}}
	]
	
		\draw[postaction={decorate}] (3,3)--(0,4);
		\draw[postaction={decorate}] (3,3)--(6,2);
		
		\draw[postaction={decorate}] (3,-3)--(6,-2);
		\draw[postaction={decorate}] (3,-3)--(0,-4);
	\end{scope}
	
	\node at (6,0) [draw,fill=white] {$(\mathsf{Filt} P_2,[S_2])$};
	\node at (3,3) [draw,fill=white] {$(\mathsf{Filt} P_1,[P_2])$};
	\node at (3,-3) [draw,fill=white] {$(\mathsf{Filt} S_2,[P_1[1]])$};
	\node at (-3,-2) [draw,fill=white] {$(\mathsf{Filt} P_1,[P_2[1]])$};
	\node at (-3,2) [draw,fill=white] {$(\mathsf{Filt} S_2,[P_1])$};
	\node at (6,2) [draw,fill=white] {$(0,[P_2\sqcup S_2])$};
	\node at (6,-2) [draw,fill=white] {$(0,[S_2\sqcup P_1[1]])$};
	\node at (-6,0) [draw,fill=white] {$(0,[P_1\sqcup P_2[1]])$};
	\node at (0,4) [draw,fill=white] {$(0,[P_1\sqcup P_2])$};
	\node at (0,-4) [draw,fill=white] {$(0,[P_1[1]\sqcup P_2[1]])$};
	
	\node at (0,0) [draw,fill=white] {$(\mathsf{mod}\Lambda,[0])$};
	
	\draw[very thick,color=blue,dashed] (0,1.5)--(-1,0.75)--(-1.5,0)--(-1,-0.75)--(0,-1.5)--(1,-1)--(1.5,-0.5)--(1.5,0)--(1.5,0.5)--(1,1)--(0,1.5);
	
	\draw[very thick,color=orange,dotted,bend right] (-4,1.33)--(-3,0.5)--(-2,1.33)--(-1.25,1.95)--(-2,2.667);
	
	\node at (-4,1.33) [draw,fill,circle,scale=0.6,color=orange]{};
	\node at (-2,1.33) [draw,fill,circle,scale=0.6,color=orange]{};
	\node at (-2,2.667) [draw,fill,circle,scale=0.6,color=orange]{};

\end{tikzpicture}
}
\caption{The 2-cell $e(\mathsf{mod}\Lambda)$ for $\Lambda = K(2\leftarrow 1)$. We see that the link of the vertex $(\mathsf{mod}\Lambda,[0])$, drawn in dashed blue, is homeomorphic to a 1-sphere. This means $(\mathsf{mod}\Lambda,[0])$ lies in the interior of the cell. Likewise, the link of the vertex $(\mathsf{Filt} S_2,[S_1])$, drawn in dotted orange, is homeomorphic to a 1-disk. This means $(\mathsf{Filt}(S_2),[S_1])$ lies on the boundary of the disk. Details of the attaching map for this 2-cell are given in Example \ref{ex:cell}}\label{fig:cell}
\end{figure}
We see that the link of the central vertex $(\mathsf{mod}\Lambda,[0])$, drawn in dashed blue, is indeed homeomorphic to a 1-sphere. Moreover, the link of the vertex $(\mathsf{Filt} S_2,[P_1])$, drawn in dotted orange, is the join of a 0-sphere (corresponding to the vertices on the arrows $(\mathsf{Filt} S_2,[P_1]) \rightarrow (0,[P_1\sqcup P_2])$ and $(\mathsf{Filt} S_2,[P_1]) \rightarrow (0,[P_1\sqcup P_2[1])$) and a 0-disk (corresponding to the vertex on the arrow $[S_1]:(W,[0])\rightarrow (\mathsf{Filt} S_2,[P_1])$). This is homeomorphic to a 1-disk, and shows that $(\mathsf{Filt} S_2,[P_1])$ lies on the boundary of $e(\mathsf{mod}\Lambda)$. In addition, we see that the 0-sphere in question can be identified with the link of $(\mathsf{Filt} S_2,[0])$ in $\mathcal{B} (\mathsf{Filt} S_2\backslash \mathfrak{W}(\mathsf{Filt} S_2))$.

In this example, the equivalence relation $\sim_3$ from the proof of Proposition \ref{prop:CW} makes the following identifications:
\begin{itemize}
	\item The five vertices of the form $(0,[U_1\sqcup U_2])$ are identified. These vertices correspond to the factorization cube of the identity morphism $[0]:0\rightarrow 0$ in $\mathfrak{W}(\Lambda)$.
	\item The 1-cells corresponding to $(0,[P_1\sqcup P_2]) \leftarrow (\mathsf{Filt} S_2,[P_1]) \rightarrow (0,[P_1\sqcup P_2[1]])$ and $(0,[S_2\sqcup P_1[1]]) \leftarrow (\mathsf{Filt} S_2,[P_1[1]]) \rightarrow (0,[P_1[1]\sqcup P_2[1]])$ are identified. These both correspond to the union of the factorization cubes of the morphisms $[S_2]:\mathsf{Filt} S_2 \rightarrow 0$ and $[S_2[1]]:\mathsf{Filt} S_2\rightarrow 0$ in $\mathfrak{W}(\Lambda)$.
	\item The 1-cells corresponding to $(0,[P_1\sqcup P_2]) \leftarrow (\mathsf{Filt} P_1,[P_2]) \rightarrow (0,[P_2\sqcup S_2])$ and $(0,[P_1\sqcup P_2[1]]) \leftarrow (\mathsf{Filt} P_1,[P_2[1]]) \rightarrow (0,[P_1[1]\sqcup P_2[1]])$ are identified. These both correspond to the union of the factorization cubes of the morphisms $[P_1]:\mathsf{Filt} P_1 \rightarrow 0$ and $[P_1[1]]:\mathsf{Filt} P_1\rightarrow 0$ in $\mathfrak{W}(\Lambda)$.
\end{itemize}
In particular, none of simplices which contain the vertex $(\mathsf{mod}\Lambda,[0])$ are identified, and all identifications are on the boundary of the disk $e(\mathsf{mod}\Lambda)$.
\end{ex}

We now recall that the generators of the fundamental group of a CW complex correspond to the 1-cells. From Section \ref{sec:last}, the set of rank one wide subcategories is precisely $\{\mathsf{Filt} S|S\in \mathsf{brick}\Lambda\}.$ Moreover, for any brick $S$ in $\mathsf{mod}\Lambda$, there are exactly two morphisms $\mathsf{Filt} S\rightarrow 0$ in $\mathfrak{W}(\Lambda)$, namely $[\mathsf{br}(S)]$ and $[\mathsf{br}(S)[1]]$. Considering these morphisms as directed paths in $\mathcal{B}\mathfrak{W}(\Lambda)$, it follows that the 1-cell $e(\mathsf{Filt} S)$ is a copy of $S^1$, with fundamental group generated by the loop $[\mathsf{br}(S)]\inv[\mathsf{br}(S)[1]]$, where we compose paths left-to-right. By Proposition \ref{prop:CW}, this proves the following.

\begin{prop}
\label{prop:generators}
The fundamental group $\pi_1(\mathcal{B}\mathfrak{W}(\Lambda))$ is generated by $$\{X_S:=[\mathsf{br}(S)]\inv[\mathsf{br}(S)[1]]|S \in \mathsf{brick}\Lambda\}.$$
\end{prop}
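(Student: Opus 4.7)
The plan is to apply the standard CW-theoretic computation of the fundamental group to the cellular decomposition from Proposition \ref{prop:CW}. First I would observe that $0$ is the only wide subcategory of rank $0$, so $e(0)$ is the unique $0$-cell of $\B\W(\Lambda)$ and the $0$-skeleton consists of a single point, which I take as basepoint. Next, by Lemma \ref{lem:lastbricks} together with the bijection $S \mapsto \Filt S$ between $\brick\Lambda$ and rank-one wide subcategories (from Theorem 4.11 via $\stt(\Filt S) = \{\br(S), \br(S)[1]\}$), the rank-one wide subcategories are precisely $\{\Filt S : S \in \brick\Lambda\}$, and for each such $\Filt S$ there are exactly two morphisms to $0$ in $\W(\Lambda)$, namely $[\br(S)]$ and $[\br(S)[1]]$.

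Each such morphism has a factorization category isomorphic to $\mathcal{I}^1$ (by Corollary \ref{cor:cubeembedding}), so its factorization cube is a single edge from $\Filt S$ to $0$. By Proposition \ref{prop:CW}, the $1$-cell $e(\Filt S)$ is the union of these two edges; since they share both endpoints, their union is a circle containing $\Filt S$ in its interior, attached to the $0$-skeleton at the single point $0$. Thus the $1$-skeleton of $\B\W(\Lambda)$ is a wedge of circles indexed by $\brick\Lambda$.

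To finish, I would invoke the standard fact that for a CW complex with a single $0$-cell, the fundamental group of the $1$-skeleton is the free group on the set of $1$-cells, and the inclusion of the $1$-skeleton induces a surjection $\pi_1\bigl((\B\W(\Lambda))^{(1)}\bigr) \twoheadrightarrow \pi_1(\B\W(\Lambda), 0)$. It remains only to identify the loop corresponding to $e(\Filt S)$ with $X_S$: orienting the circle so that we first traverse $[\br(S)]$ backwards from $0$ to $\Filt S$ and then $[\br(S)[1]]$ forward from $\Filt S$ to $0$ yields exactly the composite $[\br(S)]^{-1}[\br(S)[1]] = X_S$. Hence the $X_S$ generate $\pi_1(\B\W(\Lambda))$.

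There is no real obstacle here; the argument is essentially a routine consequence of the cell structure established in Proposition \ref{prop:CW}. The only care required is a bookkeeping check that both morphisms $\Filt S \to 0$ appear as legitimate $1$-cubes in the decomposition, and an orientation convention ensuring the generator reads as $X_S$ rather than its inverse. The harder content (relations among the $X_S$ coming from $2$-cells $e(W)$ of rank-two wide subcategories) is separate and belongs to the subsequent identification with the picture group.
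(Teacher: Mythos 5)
Your proof is correct and follows essentially the same route as the paper: identify the single $0$-cell, use the bijection $S\mapsto\Filt S$ and Lemma \ref{lem:lastbricks} to see each rank-one wide subcategory contributes a $1$-cell which is a circle formed from the two morphisms $[\br(S)]$ and $[\br(S)[1]]$, and then invoke the standard fact that $\pi_1$ of a CW complex is generated by its $1$-cells. The extra detail you supply about the $1$-skeleton being a wedge of circles and the induced surjection on $\pi_1$ is just a more explicit version of the paper's appeal to Proposition \ref{prop:CW}.
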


We now need only describe the relations, which are given by the 2 cells. Let $W = \mathsf{Filt} (S_1\sqcup S_2)$ be a wide subcategory of rank 2. We now observe that for $T,T'\in\mathsf{brick} W$, we have that $[\mathsf{br}(T)]$ and $[\mathsf{br}(T')[1]]$ are compatible as last morphisms if and only if $T\sqcup T'[1]\in\smc W$. This means $e(W)$ and the polygon $\mathcal{P}(S_1,S_2)$ have the form shown below in Figure \ref{fig:homotopy}, where $e(W)$ gives a homotopy 
$$X_{S_1}X_{T_1}\cdots X_{T_k}X_{S_2}\sim X_{S_2}X_{T'_1}\cdots X_{T'_l}X_{S_1}.$$
\begin{figure}
\begin{tikzpicture}[scale=0.9]
	\begin{scope}[thick,decoration={
	markings,
	mark=at position 0.5 with {\arrow[scale=1.5]{>}}}
	]
		\draw[postaction={decorate},dashed] (0,0)--(-2,4);
		\draw[postaction={decorate},dashed] (0,0)--(2,4);
		\draw[postaction={decorate},dashed] (0,0)--(-4,1.5);
		\draw[postaction={decorate},dashed] (0,0)--(4,1.5);
		\draw[postaction={decorate},dashed] (0,0)--(4,-1.5);
		\draw[postaction={decorate},dashed] (0,0)--(-4,-1.5);
		\draw[postaction={decorate},dashed] (0,0)--(2,-4);
		\draw[postaction={decorate},dashed] (0,0)--(-2,-4);
		
		\draw[postaction={decorate}] (-2,4)--(0,5) node [midway,anchor=south east]{$S_1$};
		\draw[postaction={decorate}] (2,4)--(0,5) node [midway,anchor=south west]{$S_2$};
		\draw[postaction={decorate}] (-2,4)--(-4,3) node [midway,anchor=south east]{$S_1[1]$};
		\draw[postaction={decorate}] (-4,1.5)--(-4,3) node [midway,anchor=east]{$T_1$};
		\draw[postaction={decorate}] (2,4)--(4,3) node [midway,anchor=south west]{$S_2[1]$};
		\draw[postaction={decorate}] (4,1.5)--(4,3) node [midway,anchor=west]{$T'_1$};
		
		\draw[postaction={decorate}] (2,-4)--(0,-5) node [midway,anchor=north west]{$S_1[1]$};
		\draw[postaction={decorate}] (-2,-4)--(0,-5) node [midway,anchor=north east]{$S_2[1]$};
		\draw[postaction={decorate}] (2,-4)--(4,-3) node [midway,anchor=north west]{$S_1$};
		\draw[postaction={decorate}] (4,-1.5)--(4,-3) node [midway,anchor=west]{$T'_l[1]$};
		\draw[postaction={decorate}] (-2,-4)--(-4,-3) node [midway,anchor=north east]{$S_2$};
		\draw[postaction={decorate}] (-4,-1.5)--(-4,-3) node [midway,anchor=east]{$T_k[1]$};
	\end{scope}
	
	\begin{scope}[thick,decoration={
	markings,
	mark=at position 0.7 with {\arrow[scale=1.5]{>}}}
	]
		\draw[postaction={decorate}] (-4,1.5)--(-4,0.5) node [near end,anchor=east]{$T_1[1]$};
		\draw[postaction={decorate}] (4,-1.5)--(4,-0.5) node [near end,anchor=west]{$T'_l$};
		\draw[postaction={decorate}] (-4,-1.5)--(-4,-0.5) node [near end,anchor=east]{$T_k$};
		\draw[postaction={decorate}] (4,1.5)--(4,0.5) node [near end,anchor=west]{$T'_1[1]$};
	\end{scope}
	
	\node at (-4,0) [rotate=90] {$\cdots$};
	\node at (4,0) [rotate=90] {$\cdots$};
	
	\node at (0,0) [draw,fill=white,very thick,dotted] {$W$};
	\node at (0,5) [draw,fill=white] {$0$};
	\node at (-4,3) [draw,fill=white] {$0$};
	\node at (4,3) [draw,fill=white] {$0$};
	\node at (-4,-3) [draw,fill=white] {$0$};
	\node at (4,-3) [draw,fill=white] {$0$};
	\node at (0,-5) [draw,fill=white] {$0$};
	\node at (-2,4) [draw,fill=white,very thick,dotted] {$\mathsf{Filt} S_1$};
	\node at (2,4) [draw,fill=white,very thick,dotted] {$\mathsf{Filt} S_2$};
	\node at (-2,-4) [draw,fill=white,very thick,dotted] {$\mathsf{Filt} S_2$};
	\node at (2,-4) [draw,fill=white,very thick,dotted] {$\mathsf{Filt} S_1$};
	\node at (4,1.5) [draw,fill=white,very thick,dotted] {$\mathsf{Filt} T'_1$};
	\node at (-4,1.5) [draw,fill=white,very thick,dotted] {$\mathsf{Filt} T_1$};
	\node at (4,-1.5) [draw,fill=white,very thick,dotted] {$\mathsf{Filt} T'_l$};
	\node at (-4,-1.5) [draw,fill=white,very thick,dotted] {$\mathsf{Filt} T_k$};
	
	\node at (0,-6) {$e(\mathsf{Filt}(S_1\sqcup S_2))$};
	
	\begin{scope}[shift={(6,-5)}]
		\begin{scope}[thick,decoration={
	markings,
	mark=at position 0.5 with {\arrow[scale=1.5]{>}}}
	]
	\draw[postaction={decorate}] (0,-0.5)--(-1.5,-1.5) node [pos=0.65,anchor=south east]{$S_1$};
	\draw[postaction={decorate}] (0,-0.5)--(1.5,-1.5) node [pos=0.65,anchor=south west]{$S_2$};
	\draw[postaction={decorate}] (-1.5,-1.5)--(-1.5,-2.5) node [midway,anchor=east]{$T_1$};
	\draw[postaction={decorate}] (1.5,-1.5)--(1.5,-2.5) node [midway,anchor=west]{$T'_1$};
	\draw[postaction={decorate}] (-1.5,-3)--(-1.5,-4) node [midway,anchor=east]{$T_k$};
	\draw[postaction={decorate}] (1.5,-3)--(1.5,-4) node [midway,anchor=west]{$T'_l$};
	\draw[postaction={decorate}] (-1.5,-4)--(0,-5) node [pos=0.4,anchor=north east]{$S_2$};
	\draw[postaction={decorate}] (1.5,-4)--(0,-5) node [pos=0.4,anchor=north west]{$S_1$};
	\end{scope}
	
	\node at (0,-0.5) [draw,fill=white,anchor=south]{$\mathsf{Filt}\mathsf{Fac}(S_1\sqcup S_2$)};
	\node at (0,-5) [draw,fill=white]{$0$};
	
	\node at (-1.5,-1.5) [draw,fill,circle,scale=0.6]{};
	\node at (1.5,-1.5) [draw,fill,circle,scale=0.6]{};
	\node at (-1.5,-4) [draw,fill,circle,scale=0.6]{};
	\node at (1.5,-4) [draw,fill,circle,scale=0.6]{};
	
	\node at (-1.5,-2.7)[rotate=90]{\small $\cdots$};
	\node at (1.5,-2.7)[rotate=90]{\small $\cdots$};
	\node at (0,-6){$\mathcal{P}(S_1,S_2)$};
	\end{scope}
\end{tikzpicture}
\caption{The 2-cell $e(\mathsf{Filt}(S_1\sqcup S_2))$ (top left) and the corresponding polygon in $\mathsf{tors}\Lambda$ (bottom right). In the 2-cell $e(\mathsf{Filt}(S_1\sqcup S_2))$, wide subcategories different from 0 are dotted and arrows corresponding to morphisms in $\mathfrak{W}(\mathsf{Filt}(S_1\sqcup S_2))$ with target different from 0 are dashed to emphasize that they are not themselves cells in the CW-structure. The attaching map for the 2-cell $e(\mathsf{Filt}(S_1\sqcup S_2))$ is given by identifying arrows on the boundary with the same label. Moreover, the 1-cells on the boundary of $e(\mathsf{Filt}(S_1\sqcup S_2))$ correspond precisely to the brick labels of the polygon, in the same order.}
\label{fig:homotopy}
\end{figure}

We have shown that the relations of $\pi_1(\mathcal{B}\mathfrak{W}(\Lambda))$ are precisely the polygon relations in the presentation of $G(\Lambda)$ (see Definition-Theorem \ref{defthm:picture}). Thus we have proven our second main theorem (Theorem B in the introduction).
\begin{thm}
\label{thm:pi1}
The fundamental group $\pi_1(\mathcal{B}\mathfrak{W}(\Lambda))$ is isomorphic to $G(\Lambda)$, the picture group of~$\Lambda$.
\end{thm}


\subsection{Faithful Group Functors for Nakayama Algebras}
\label{sec:nakaymapictures}

The remainder of this section is aimed at constructing a faithful group functor $\mathfrak{W}(\Lambda)\rightarrow G(\Lambda)$ in the case that $\Lambda$ is Nakayama. One of the key properties used to prove this functor is faithful is that the generator of $G(\Lambda)$ corresponding to each brick in $\mathsf{mod}\Lambda$ is nontrivial. We prove this by defining a morphism of groups $G(\Lambda) \rightarrow \mathcal{B}^*(\Lambda)$ which does not vanish on any of the generators. The group $\mathcal{B}^*(\Lambda)$ is the group of units of the \emph{brick algebra} of $\Lambda$, defined below in Definition-Theorem \ref{defthm:brickalg}. This is a variation of the Hall algebra of~$\Lambda$. As not all results in this section rely on the assumption that $\Lambda$ is Nakayama, we will emphasize precisely where this assumption is used.

\begin{defthm}
\label{defthm:brickalg}
Let $\Lambda$ be a Nakayama algebra. Then the \emph{brick algebra} of $\Lambda$ is the free $\mathbb{Z}$-module with basis $\mathsf{brick}\Lambda\cup\{1\}$ and with multiplication given by
	\begin{eqnarray*}
		1*1 &=& 1\\
		1*S &=& S*1 = S \textnormal{ for all }S \in \mathsf{brick}\Lambda\\
		S*T &=& \begin{cases} B \textnormal{ if }S\sqcup T\in\mathsf{sbrick}\Lambda, B \in \mathsf{brick}\Lambda\textnormal{ and }T \hookrightarrow B \twoheadrightarrow S \textnormal{ is exact}\\ 0 \textnormal{ otherwise}\end{cases}
	\end{eqnarray*}
	This is an associative algebra.
\end{defthm}

\begin{proof}
We first observe, by Lemmas \ref{lem:archomslong} and \ref{lem:archoms}, that for all pairs $S = M(a_S), T=M(a_T)$ of bricks in $\mathsf{mod}\Lambda$, we have $S*T = 0$ unless the following conditions are met:
\begin{enumerate}
	\item $t(a_S) = s(a_T)$
	\item $l(a_S) + l(a_T) \leq \min(l(s(a_S)),n)$
\end{enumerate}
Moreover, if these conditions are met, then $B = M(a_Sa_T)$ is the only brick with an exact sequence $T\hookrightarrow B \twoheadrightarrow S$. We conclude that this multiplication is well-defined.

We now wish to show that this multiplication is associative. Let $S = M(a_S), T = M(a_T), R = M(a_R)$ be bricks in $\mathsf{mod}\Lambda$. Assume first that $S*T = 0$. Now if $T*R = 0$ then we are done. Otherwise, we have $T*R = M(a_Ta_R)$ and either $t(a_S) \neq s(a_T) = s(a_Ta_R)$ or
\begin{eqnarray*}
	l(a_S)+ l(a_Ta_R) &>& l(a_S) + l(a_T)\\
	&>& \min(l(s(a_S)),n).
\end{eqnarray*}
We conclude that $S*(T*R) = 0$.

Likewise, assume that $T*R = 0$ and $S*T \neq 0$. Then we have $t(a_Sa_T) = t(a_T) \neq s(a_R)$ or 
	\begin{eqnarray*}
	l(a_Sa_T) + l(a_R) &=& l(a_S) + l(a_T) + l(a_R)\\
	&>& l(a_S) + \min(l(s(a_T)),n)\\
	&\geq& \min(l(s(a_S)),n).
	\end{eqnarray*}
We conclude that $(S*T)*R = 0$.

Finally, assume that $S*T \neq 0$ and $T*R \neq 0$. Thus we have $t(a_S) = s(a_T)$ and $t(a_T) = s(a_R)$. Therefore $(S*T)*R = 0$ if and only if $l(a_S) + l(a_T) + l(a_R) > \min(l(s(a_S)),n)$ if and only if $S*(T*R) = 0$. Otherwise, we must have that $(S*T)*R = M(a_Sa_Ta_R) = S*(T*R)$. We conclude that $\mathcal{B}(\Lambda)$ is an associative algebra.
\end{proof}

We now prove the following lemmas, which will be critical in what follows. Recall that $J(N\sqcup Q[1])$ is the Jasso category of the support $\tau$-rigid pair $N \sqcup Q[1]$ (see Theorem \ref{thm:basicWide}) and $\mathcal{E}_{N\sqcup Q[1]}$ is the bijection described in Theorem \ref{thm:littlebijections}.

\begin{lem} Let $\Lambda$ be an arbitrary $\tau$-tilting finite algebra.
\label{lem:welldef}
\smallskip
\begin{enumerate}[label=\upshape(\alph*)]
\item \cite[Thm. 4.12, Prop. 4.13]{demonet_lattice} Let $N\sqcup Q[1]$ be a support $\tau$-rigid pair for $\Lambda$. Then there is a label-preserving isomorphism of lattices
$$\mathcal{G}_{N\sqcup Q[1]}:[\mathsf{Fac} N,\lperp{(\tau N)}\cap Q^\perp]\rightarrow \mathsf{tors} (J(N\sqcup Q[1]))$$
given by
$$\mathcal{G}_{N\sqcup Q[1]}(\mathcal{T}) := J(N\sqcup Q[1])\cap\mathcal{T}.$$

\item Let $N\sqcup M\sqcup Q[1]\sqcup P[1]$ be a support $\tau$-rigid pair for $\Lambda$. Then we have
$$\mathcal{G}_{N\sqcup Q[1]}(\mathsf{Fac} (N\sqcup M)) = \mathsf{Fac}(\mathcal{E}_{N\sqcup Q[1]}(M'))$$
where $M'$ is the direct sum of the indecomposable direct summands of $M$ sent to modules by $\mathcal{E}_{N\sqcup Q[1]}$ (see Theorem \ref{thm:littlebijections}).
\end{enumerate}
\end{lem}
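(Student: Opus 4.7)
The plan is to prove the equality of torsion classes in $J := J(N\sqcup Q[1])$ by establishing two inclusions, after first confirming that $\Fac(N\sqcup M)$ lies in the interval on which $\F_{N\sqcup Q[1]}$ is defined. The lower bound $\Fac N\subseteq \Fac(N\sqcup M)$ is immediate, and for the upper bound, since $N\sqcup M\sqcup Q[1]\sqcup P[1]$ is support $\tau$-rigid, applying part (a) of the first theorem in Section~\ref{sec:tilting} (with Bongartz complement $B$) yields
\[\Fac(N\sqcup M)\subseteq \Fac(N\sqcup M\sqcup B) = \lperp{\tau(N\sqcup M)}\cap(Q\sqcup P)^\perp \subseteq \lperp{\tau N}\cap Q^\perp.\]
Thus by part (a), $\F_{N\sqcup Q[1]}(\Fac(N\sqcup M)) = J\cap\Fac(N\sqcup M)$, and the task reduces to identifying this intersection with $\Fac(\E_{N\sqcup Q[1]}(M'))$.

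For the inclusion $\Fac(\E_{N\sqcup Q[1]}(M'))\subseteq J\cap\Fac(N\sqcup M)$, recall from Theorem~\ref{thm:littlebijections}(c) that $\E_{N\sqcup Q[1]} = \E_N^{J(Q[1])}\circ\E_{Q[1]}$. Applying the explicit formulas in parts (a) and (b), an indecomposable module summand $M_i$ of $M$ satisfies $\E_{Q[1]}(M_i) = M_i$ and then $\E_N^{J(Q[1])}(M_i) = M_i/\rad(N,M_i)$ when $M_i\notin\Fac N$, while the remaining summands are sent to shifted modules. Hence $M'$ is precisely the sum of the summands with $M_i\notin\Fac N$, and each $\E_{N\sqcup Q[1]}(M_i)$ is a module quotient of $M_i$ lying in $J$. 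Since $J$ is a module category (Theorem~\ref{thm:basicWide}) in which $\E_{N\sqcup Q[1]}(M')$ is support $\tau_J$-rigid, its factor closure is a torsion class in $J$, so we obtain $\Fac(\E_{N\sqcup Q[1]}(M'))\subseteq \Fac M\cap J\subseteq \Fac(N\sqcup M)\cap J$.

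For the reverse inclusion, the key observation is that $\Hom(\Fac N, X) = 0$ for every $X\in J$: if $Y\in \Fac N$ admits a surjection $N^\ell\twoheadrightarrow Y$, then a nonzero map $Y\to X$ would compose to a nonzero map $N^\ell\to X$, contradicting $X\in N^\perp$. Given $X\in J\cap\Fac(N\sqcup M)$, choose a surjection $(N\sqcup M)^k\twoheadrightarrow X$ and decompose $M = M'\sqcup M''$ with $M''$ the sum of summands lying in $\Fac N$. The restrictions of this surjection to $N^k$ and to $(M'')^k$ both vanish by the key observation, leaving a surjection $(M')^k\twoheadrightarrow X$. For each summand $M_i$ of $M'$, the submodule $\rad(N,M_i)\subseteq M_i$ (the trace of $N$ in $M_i$) lies in $\Fac N$, so any map $M_i\to X$ annihilates it and factors through $\E_{N\sqcup Q[1]}(M_i) = M_i/\rad(N,M_i)$. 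Assembling these factorizations produces a surjection $\E_{N\sqcup Q[1]}(M')^k\twoheadrightarrow X$, giving $X\in \Fac(\E_{N\sqcup Q[1]}(M'))$ as required.

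The main obstacle I anticipate is the bookkeeping needed to match the case-analysis of the $\E$-formula in Theorem~\ref{thm:littlebijections} with the decomposition $M = M'\sqcup M''$, and to confirm that taking factor closures behaves compatibly when passing between $\mods\Lambda$ and the wide subcategory $J$; both points will be handled by noting that $J$ is itself a module category in which $\E_{N\sqcup Q[1]}(M')$ is support $\tau_J$-rigid, so $\Fac(\E_{N\sqcup Q[1]}(M'))$ is a genuine torsion class of $J$.
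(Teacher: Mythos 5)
Your argument is correct and follows essentially the same route as the paper's proof: reduce to the identity $N^\perp\cap\Fac M=\Fac(\E_{N\sqcup Q[1]}(M'))$ inside $J(N\sqcup Q[1])$ and obtain the nontrivial inclusion by observing that any surjection onto an object of $N^\perp$ kills the trace of $N$ and hence factors through $\E_{N\sqcup Q[1]}(M')$. You are somewhat more explicit than the paper (verifying that $\Fac(N\sqcup M)$ lies in the domain interval and spelling out the decomposition $M=M'\sqcup M''$), but the substance is the same.
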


\begin{proof}
(b) First recall that $\mathsf{Fac} (N \sqcup M) = \mathsf{Fac} (N \sqcup M')$ by Theorem \ref{thm:littlebijections}. Thus using the definitions, we have
\begin{eqnarray*}
	\mathcal{G}_{N\sqcup Q[1]}(\mathsf{Fac} (N\sqcup M)) &=&  J(N\sqcup Q[1])\cap \mathsf{Fac}(N\sqcup M')\\
	&=& N^\perp\cap\lperp{(\tau N)}\cap Q^\perp\cap\mathsf{Fac}(N\sqcup M')\\
	&=&N^\perp\cap \lperp{(\tau N)}\cap Q^\perp\cap\mathsf{Fac} M'.
\end{eqnarray*}
We now claim $\mathsf{Fac} M' \subseteq \lperp{(\tau N)}\cap Q^\perp$. Indeed, let $L \in \mathsf{Fac} M$. Since $M'$ is a direct summand of $M$ and $N\sqcup M \sqcup Q[1] \sqcup P[1]$ is a support $\tau$-rigid pair, we know $M' \in \lperp{(\tau N)} \cap Q^\perp$. Since $Q$ is projective, this implies that $L \in \lperp{(\tau N)} \cap Q^\perp$ as well.

This means we need only show that $N^\perp\cap\mathsf{Fac} M' = \mathsf{Fac}(\mathcal{E}_{N\sqcup Q[1]}(M'))$. By definition, $\mathcal{E}_{N\sqcup Q[1]}(M') = M'/\mathrm{rad}(N,M') \in N^\perp \cap \mathsf{Fac} M'$, so let $L \in N^\perp\cap\mathsf{Fac} M'$. Thus there is a positive integer $t$ and an epimorphism $q:(M')^t \rightarrow L$. Necessarily, $\mathrm{rad}(N,(M')^t) = \mathrm{rad}(N,M')^t$ is contained in $\mathrm{ker} q$, which means $q$ factors through $(M'/\mathrm{rad}(N,M'))^t = \left(\mathcal{E}_{N\sqcup Q[1]}(M')\right)^t$. That is, $L \in \mathsf{Fac}\left(\mathcal{E}_{N\sqcup Q[1]}(M')\right)$. This completes the proof.
\end{proof}

{\begin{lem}
\label{lem:polygons}\
\begin{enumerate}[label=\upshape(\alph*)]
	\item \cite[Thm. 2.2.6]{barnard_minimal}\label{prop:bricklabels}
Suppose $\mathcal{T}\rightarrow\mathcal{T}'$ is an arrow in $\mathsf{Hasse}(\mathsf{tors}\Lambda)$ with brick label $B$. Then $\mathcal{T} = \mathsf{Filt}(\mathcal{T}'\sqcup\{B\})$ and every proper factor of $B$ lies in $\mathcal{T}'$.
	\item Let $\Lambda$ be a Nakayama algebra and let $S \sqcup T \in \mathsf{sbrick}\Lambda$ be a semibrick with $\mathsf{rk}(S\sqcup T) = 2$. Let $(S,B_1,\ldots,B_k,T)$ be a side of the polygon $\mathcal{P}(S,T)$. If $\mathrm{Ext}(T,S) = 0$, then $k = 0$. Otherwise, $k = 1$ and there is an exact sequence $S\hookrightarrow B_1 \twoheadrightarrow T$.
	\end{enumerate}
\end{lem}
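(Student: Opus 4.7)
Part (a) is cited, so I would focus on part (b). My first move is to apply Lemma 5.8(a) together with Theorem 5.5(c): these identify the polygon $\mathcal{P}(S,T)$ inside $\tors\Lambda$ with the full torsion-class Hasse quiver of the rank-two wide subcategory $\Filt(S\sqcup T) = J(M\sqcup P[1])$, viewed as a module category with simple objects $S$ and $T$. The problem thus reduces to enumerating $\brick(\Filt(S\sqcup T))$ and matching each extra brick to the correct side of this small Hasse quiver.

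The enumeration proceeds via the arc calculus of Section 4. Write $S = M(a_S)$ and $T = M(a_T)$. Lemmas 4.9 and 4.10 show that Hom-orthogonality of $S$ and $T$ forces $(a_S, a_T)$ into one of three configurations: (i) $a_S$ and $a_T$ are disjoint (covering all cases in which either arc has length $n$); (ii) they share exactly one endpoint (up to relabeling, $t(a_S) = s(a_T)$); or (iii) they share both endpoints, forcing $l(a_S)+l(a_T) = n$. Bricks in $\Filt(S\sqcup T)$ correspond under $M$ to arcs obtained by concatenating copies of $a_S$ and $a_T$ of total length at most $n$ (Proposition 4.3). The endpoint matching required for each concatenation, together with the length bound, restricts these to a subset of $\{S, T, M(a_Sa_T), M(a_Ta_S)\}$: configuration (i) yields only $\{S,T\}$; configuration (ii) yields at most $M(a_Sa_T)$, appearing precisely when the length condition of Lemma 4.10(b) holds, which is exactly $\Ext(S,T)\neq 0$; configuration (iii) permits both $M(a_Sa_T)$ and $M(a_Ta_S)$, each conditioned on the corresponding length inequality from Lemma 4.10(c), encoding $\Ext(S,T) \neq 0$ and $\Ext(T,S) \neq 0$ respectively. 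Iterated concatenations beyond these cannot be bricks because their length would exceed $n$.

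By Theorem 5.5(c), $\brick(\Filt(S\sqcup T)) = \{S, T\} \sqcup \{B_i\}_{i=1}^k \sqcup \{B'_j\}_{j=1}^l$ without repetitions, forcing $k, l \in \{0, 1\}$. To place each extra brick on the correct side of the polygon I invoke part (a): on the side $(S, B_1, \ldots, B_k, T)$, the final arrow carries label $T$, so by (a) the torsion class immediately above the bottom (inside $\Filt(S\sqcup T)$) is $\Filt(T) = \add T$; when $k = 1$, a second application of (a) to the penultimate arrow forces every proper factor of $B_1$ to lie in $\add T$, so $B_1$ must admit $T$ as a proper quotient. Among the candidate bricks, the unique one with $T$ as a proper quotient is the extension realizing $S \hookrightarrow B_1 \twoheadrightarrow T$, which exists iff $\Ext(T,S) \neq 0$; this matches the claim. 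The main subtlety is configuration (iii) when both Ext groups are nonzero, where one must rule out the two extensions landing on the same side of the polygon. This is immediate from the no-repetition statement of Theorem 5.5(c) together with the fact that $M(a_Sa_T)$ and $M(a_Ta_S)$ have $S$ and $T$, respectively, as their distinguished proper quotients, which (by the same part (a) argument applied to both sides) places them on opposite sides.
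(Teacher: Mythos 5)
Your proof is correct and arrives at the statement from the same raw ingredients as the paper --- part (a), Theorem \ref{thm:polygons}(c), and the Nakayama-specific structure of bricks --- but it organizes them in a genuinely different way. The paper never enumerates $\brick(\Filt(S\sqcup T))$ up front. Instead, for an arbitrary $B_i$ on the side $(S,B_1,\ldots,B_k,T)$ it uses part (a) and closure of torsion classes under extensions to prove $\Hom(B_i,S)=0=\Hom(T,B_i)$ directly; since $B_i\in\Filt(S\sqcup T)$, where $S$ and $T$ are the simple objects, and since Proposition \ref{prop:nakayamabricks} forces each of $S,T$ to occur only once in the composition series of $B_i$, this yields the exact sequence $S\hookrightarrow B_i\twoheadrightarrow T$, and $k\le 1$ then follows from the uniqueness of a brick admitting such a sequence (the computation already carried out in the proof of Definition-Theorem \ref{defthm:brickalg}). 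You instead enumerate $\brick(\Filt(S\sqcup T))\subseteq\{S,T,M(a_Sa_T),M(a_Ta_S)\}$ via the arc calculus of Lemmas \ref{lem:archomslong} and \ref{lem:archoms} and then sort the at most two extension bricks onto the two sides using the proper-factor clause of part (a) at the penultimate arrow. Two points deserve more care in your version, though neither is a genuine gap. First, the claim that every brick of $\Filt(S\sqcup T)$ is a concatenation of $a_S$ and $a_T$ needs the observation that indecomposables over a Nakayama algebra are uniserial, so a filtration of such a brick by $S$ and $T$ forces its composition series to be a concatenation of theirs; this is the analogue of the paper's ``length $2$ in $\Filt(S\sqcup T)$'' step and should be stated explicitly. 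Second, part (a) places the proper factors of $B_1$ in the torsion class $\T_{B_1}$ of $\mods\Lambda$, not in $\add T$; one must intersect with $\Filt(S\sqcup T)$ using the label-preserving isomorphism of Lemma \ref{lem:welldef}(a) before concluding that the simple top of $B_1$ inside that wide subcategory is $T$. With those two remarks supplied, your argument is sound and delivers the same conclusion; its advantage is that the case analysis on the relative position of the two arcs is made explicit, whereas the paper hides that analysis inside the proof of Definition-Theorem \ref{defthm:brickalg} and substitutes a shorter Hom-vanishing argument for the placement step.
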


\begin{proof}
	(b) It follows from Theorem \ref{thm:polygons}(c) that each for $i\neq j$, none of $B_i,B_j,S$, and $T$ are isomorphic. Now by definition, there is a sequence of arrows in $\mathsf{Hasse}(\mathsf{tors}\Lambda)$ with brick labels as follows:
	$$\mathcal{T}\xrightarrow{S}\mathcal{T}_S\xrightarrow{B_1}\mathcal{T}_{B_1}\xrightarrow{B_2}\cdots\xrightarrow{B_k}\mathcal{T}_{B_k}\xrightarrow{T}\mathcal{T}'.$$
	
	Fix some $B_i$ and suppose there exists a nonzero morphism $f:B_i \rightarrow S$. By (a) we then have that $\mathrm{coker}(f) \in \mathcal{T}_{S}$ and $\mathrm{Im}(f) \in \mathcal{T}_{B_{i-1}} \subseteq \mathcal{T}_S$. Since $\mathcal{T}_S$ is closed under extensions, this implies that $S \in \mathcal{T}_S$, a contradiction. We conclude that $\mathrm{Hom}(B_i,S) = 0$. A similar argument shows that $\mathrm{Hom}(T,B_i) = 0$.
	
	Now recall from Theorem \ref{thm:polygons}(c) that $B_i \in \mathsf{Filt}(S\sqcup T)$. As $S$ and $T$ are simple objects in this category, we conclude that there must be morphisms $S\hookrightarrow B_i$ and $B_i \twoheadrightarrow T$. Moreover, by Proposition \ref{prop:nakayamabricks}, $S$ and $T$ can only occur once in the composition series for $B_i$. That is, the length of $B_i$ in $\mathsf{Filt}(S\sqcup T)$ is 2 and thus the sequence $S\hookrightarrow B_i \twoheadrightarrow T$ is exact. The result then follows as in the proof of Definition-Theorem \ref{defthm:brickalg}.
\end{proof}

\begin{lem}
\label{lem:faithful}
Let $\Lambda$ be a Nakayama algebra.
\begin{enumerate}[label=\upshape(\alph*)]
	\item Let $S\ncong S'$ be a pair of bricks in $\mathsf{mod}\Lambda$. Then $e \neq X_S \neq X_{S'} \in G(\Lambda)$.
	\item Let $\mathcal{T}\neq\mathcal{T}'$ be a pair of torsion classes in $\mathsf{tors}\Lambda$. Then $g_{\mathcal{T}} \neq g_{\mathcal{T}'} \in G(\Lambda)$.
\end{enumerate}
\end{lem}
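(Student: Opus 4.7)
The plan is to define a group homomorphism $\rho: G(\Lambda) \to \B^*(\Lambda)$ by $X_S \mapsto 1 + S$ and use it to separate group elements. Each $1+S$ is a unit in $\B(\Lambda)$ with inverse $1-S$, since a direct analysis of the multiplication (using the arc description of bricks from Section~\ref{sec:nakayama}) yields $S * S = 0$ for every brick over a Nakayama algebra.

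Well-definedness of $\rho$ on $G(\Lambda)$ requires that it respects the polygon relations of Definition-Theorem~\ref{defthm:picture}. By Lemma~\ref{lem:polygons}(b), each side $(S_i, B_1, \ldots, B_k, S_j)$ of a polygon $\mathcal{P}(S_1,S_2)$ in the Nakayama setting has $k \in \{0,1\}$, and when $k=1$ the intermediate brick satisfies $B_1 = S_j * S_i$ via the exact sequence $S_i \hookrightarrow B_1 \twoheadrightarrow S_j$. Case analysis on $(k,l) \in \{0,1\}^2$, combined with the observation that $A*B = 0$ whenever $\Hom(A,B) \neq 0$ or $\Hom(B,A) \neq 0$---which kills every product involving an intermediate brick $B_1$, since $B_1$ has $S_i$ as a submodule and $S_j$ as a quotient---expands both sides of each polygon relation to $1 + S_1 + S_2 + S_1*S_2 + S_2*S_1$ in $\B(\Lambda)$. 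Part (a) then follows immediately: for distinct $S, S' \in \brick\Lambda$, the images $1 + S$ and $1 + S'$ are manifestly distinct nonidentity elements of $\B(\Lambda)$, so $X_S$ and $X_{S'}$ are distinct and nonidentity in $G(\Lambda)$.

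For part (b), I would use the presentation from Proposition~\ref{prop:presentations}(c), writing $g_\T = X_{S_1} \cdots X_{S_k}$ along any Hasse path $\T \to 0$, and prove by induction on $k$ the formula
$$\rho(g_\T) = 1 + \sum_{B \in \brick(\T)} B.$$
The inductive step, given an arrow $\T \xrightarrow{S} \T'$, reduces to the identity
$$\brick(\T) \setminus \brick(\T') = \{S\} \cup \{S * B' : B' \in \brick(\T'),\, S*B' \neq 0\}.$$
This is the main obstacle of the proof. The containment $\supseteq$ follows from Lemma~\ref{lem:polygons}(a), which gives $\T = \Filt(\T' \cup \{S\})$ and confines every proper factor of $S$ to $\T'$. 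The reverse inclusion is Nakayama-specific and I would prove it using the arc model of Section~\ref{sec:nakayama}: any brick in $\T \setminus \T'$ has its arc decompose into a concatenation involving the arc of $S$, and the brick length bound from Proposition~\ref{prop:nakayamabricks} forces the decomposition to pair the arc of $S$ with exactly one further arc belonging to $\brick(\T')$. Once the formula is established, a short argument following Hasse paths shows that $\T_1 \subsetneq \T_2$ implies $\brick(\T_1) \subsetneq \brick(\T_2)$, so distinct torsion classes have distinct brick sets and hence distinct images under $\rho$.
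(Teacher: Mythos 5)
Your part (a) is the paper's proof: the same map $X_S\mapsto 1+S$ into $\B^*(\Lambda)$, the same use of $S*S=0$ for invertibility, and the same case analysis of polygon sides via Lemma \ref{lem:polygons}(b), with the products involving the intermediate brick killed because it is not Hom-orthogonal to $S_1$ or $S_2$. That part is fine.

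Part (b) is where you diverge, and there is a genuine gap. Your argument rests entirely on the identity $\brick(\T)\setminus\brick(\T')=\{S\}\cup\{S*B' : B'\in\brick(\T')\}$ for each Hasse arrow $\T\xrightarrow{S}\T'$, and you only sketch it. Two things are missing. First, Lemma \ref{lem:polygons}(a) gives you that a brick $B\in\T\setminus\T'$ is filtered by objects of $\T'\cup\{S\}$, i.e.\ its arc is cut into consecutive segments each lying in $\T'$ or equal to $S$; the length bound of Proposition \ref{prop:nakayamabricks} does force $S$ to occur exactly once among the segments, but nothing you cite forces $S$ to be the \emph{top} segment, which is exactly what $B=S*B'$ (i.e.\ $B'\hookrightarrow B\twoheadrightarrow S$) requires. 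Ruling out configurations with $S$ at the socle end or in the interior needs the finer characterization of brick labels (the Hom/Ext conditions on minimal extending modules from \cite{barnard_minimal}), not just ``every proper factor of $S$ lies in $\T'$.'' Second, even granting that $S$ sits on top, you must show the remaining part of the arc is a \emph{single} brick of $\T'$ rather than a longer concatenation of $\T'$-segments whose total is not itself in $\T'$; ``the brick length bound forces the decomposition to pair the arc of $S$ with exactly one further arc'' is asserted, not argued. Until both points are settled, the formula $\rho(g_\T)=1+\sum_{B\in\brick(\T)}B$ is unproven and part (b) does not go through.

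The paper avoids all of this. It never computes $\phi(g_\T)$ in closed form: given $g_\T=g_{\T'}$ with $\T\neq\T'$, it takes the label sequences of paths $\T\to\T\wedge\T'$ and $\T'\to\T\wedge\T'$, applies $\phi$, and uses only the grading of $\B(\Lambda)$ by Loewy length (a product of two or more bricks has length strictly greater than each factor) to conclude that the minimal-length label must occur in both sequences; then $\Filt\Fac(\brick(\T\wedge\T')\sqcup S)$ is a torsion class strictly between $\T\wedge\T'$ and both $\T$ and $\T'$, contradicting the definition of the meet. I recommend replacing your inductive computation with this coefficient-extraction argument, or else supplying a full proof of your brick-set identity.
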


\begin{proof}
(a) We wish to define a morphism of groups $\phi: G(\Lambda) \rightarrow \mathcal{B}^*(\Lambda)$ by $\phi(X_S)=1+S$ and $\phi(X_S\inv) = 1-S$. We will show that $\phi$ is well-defined and is in fact a morphism of groups. This will imply that each $X_S$ is nontrivial in $G(\Lambda)$.

First, we observe that for all $S \in \mathsf{brick}\Lambda$, we have 
$$\phi(X_SX_S\inv) = (1+S)*(1-S) = 1 - S*S = 1$$
because $S$ cannot have a self-extension which is a brick.

We now need only show that $\phi$ preserves the polygon relations. Let $\mathcal{P}(S,T)$ be a polygon in $\mathsf{Hasse}(\mathsf{tors}\Lambda)$. By Lemma \ref{lem:polygons}(b) above, there are only four possibilities:
\begin{enumerate}
	\item The sides of the polygon $\mathcal{P}(S,T)$ are $(S,T)$ and $(T,S)$. In this case, $\mathrm{Ext}(S,T) = 0 = \mathrm{Ext}(T,S)$.
	\item The sides of the polygon $\mathcal{P}(S,T)$ are $(S,B,T)$ and $(T,S)$ where $S\hookrightarrow B \twoheadrightarrow T$ is exact. In this case, $\mathrm{Ext}(S,T) = 0$.
	\item The sides of the polygon $\mathcal{P}(S,T)$ are $(S,T)$ and $(T,B,S)$ where $T\hookrightarrow B \twoheadrightarrow S$ is exact. In this case, $\mathrm{Ext}(T,S) = 0$.
	\item The sides of the polygon $\mathcal{P}(S,T)$ are $(S,B,T)$ and $(T,B',S)$ where $S\hookrightarrow B \twoheadrightarrow T$ and $T\hookrightarrow B' \twoheadrightarrow S$ are exact.
\end{enumerate}

We only prove case (2), as the other cases follow by nearly identical arguments. It follows from the proof of Lemma \ref{lem:polygons}(b) that neither $S \sqcup B$ nor $B \sqcup T$ is a semibrick. Thus we have
\begin{eqnarray*}
	\phi(X_SX_BX_T) &=& (1+S)*(1+B)*(1+T)\\
		&=& 1 + S + B + T\\
		&=& 1 + S + T + T*S\\
		&=& (1+T)*(1+S)\\
		&=& \phi(X_TX_S)
\end{eqnarray*}
We conclude that $\phi$ is a morphism of groups and hence $e \neq X_S \neq X_{S'} \in G(\Lambda)$ for all $S, S' \in \mathsf{brick}\Lambda$.

(b) Let $\mathcal{T},\mathcal{T}' \in \mathsf{tors}\Lambda$ and let $(S_1,\ldots,S_k), (S'_1,\ldots,S'_l)$ label maximal length paths $\mathcal{T}\rightarrow \mathcal{T}\wedge \mathcal{T}', \mathcal{T}'\rightarrow \mathcal{T}\wedge\mathcal{T}'$ in $\mathsf{tors}(\Lambda)$. Assume that $X_{S_1}\cdots X_{S_k} = X_{S'_1}\cdots X_{S'_l} \in G(\Lambda)$ (and hence $g_\mathcal{T} = g_{\mathcal{T}'}$). If $\max(k,l) > 0$, let $S \in \{S_1,\ldots,S_k,S'_1,\ldots,S'_l\}$ be of minimal length. By applying the morphism $\phi$, we then have
$$(1+S_1)*\cdots *(1+S_k) = (1+S'_1)*\cdots * (1+S'_l).$$
Now given $R, R' \in \mathsf{brick}\Lambda$, either $R*R'$ is 0 or has length $l(R) + l(R')$. Thus the above equality implies that $S$ must occur in both $(S_1,\ldots,S_k)$ and $(S'_1,\ldots,S'_l)$. Thus $\mathcal{T} \wedge\mathcal{T}' \subsetneq\mathsf{Filt}\mathsf{Fac}(\mathsf{brick}(\mathcal{T}\wedge\mathcal{T}')\sqcup S) \subset \mathcal{T},\mathcal{T}'$, a contradiction. We conclude that $\max(k,l) = 0$, and hence $\mathcal{T} = \mathcal{T}\wedge\mathcal{T}' = \mathcal{T}'$.
\end{proof}

We are now ready to define the faithful group functor.

\begin{thm}
\label{thm:faithful}
Let $\Lambda$ be any $\tau$-tilting finite algebra which satisfies the conclusion of Lemma \ref{lem:faithful} and let $W,W' \in \mathsf{wide}\Lambda$. For each $[M\sqcup P[1]]\in \mathrm{Hom}_{\mathfrak{W}(\Lambda)}(W,W')$, let $(S_1,\ldots,S_k)$ label a sequence $\mathsf{Fac} M\rightarrow 0$ in $\mathsf{tors} W$. Then there is a faithful functor $F:\mathfrak{W}(\Lambda)\rightarrow G(\Lambda)$ given by
$$F[M\sqcup P[1]]:=X_{S_1}\cdots X_{S_k},$$
where $G(\Lambda)$ is considered as a groupoid with one object.
\end{thm}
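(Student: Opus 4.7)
The plan is to verify that the prescription defines $F$ unambiguously, that it respects identities and composition, and that it is injective on each hom-set.

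For well-definedness, I would fix a presentation $W = J(N\sqcup Q[1])$ of the source. By Lemma~\ref{lem:welldef}(a), the lattice isomorphism $\F_{N\sqcup Q[1]}$ identifies $\tors W$ with the interval $[\Fac N,\lperp{(\tau N)}\cap Q^\perp]\subset\tors\Lambda$ in a label-preserving way. Any sequence $(S_1,\ldots,S_k)$ labeling a path $\Fac_W M\to 0_W$ in $\Hasse(\tors W)$ therefore labels the corresponding path between two fixed torsion classes in $\Hasse(\tors\Lambda)$ with the same brick labels, and Proposition~\ref{prop:presentations}(a) guarantees that $X_{S_1}\cdots X_{S_k}\in G(\Lambda)$ depends only on these two endpoints. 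Independence from the choice of presentation $(N,Q)$ follows because the brick labels of paths in $\Hasse(\tors W)$ are intrinsic to $W$.

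For functoriality, the identity $[0]\in\Hom(W,W)$ corresponds to the empty path and yields $F[0]=e$. For composition, let $[V]\colon W\to W'$ and $[U]\colon W'\to W''$, and set $V' := V\sqcup(\E_V^W)^{-1}(U)$, so that $[U]\circ[V]=[V']$. I would split a path $\Fac_W V'\to 0_W$ in $\Hasse(\tors W)$ at the intermediate torsion class $\Fac_W V$; the tail computes $F[V]$ by definition. Applying Lemma~\ref{lem:welldef} to $W$ (which, by Theorem~\ref{thm:basicWide}(a), is equivalent to a module category $\mods\Lambda'$) with $V\in\str W$ in the role of $N\sqcup Q[1]$, the interval $[\Fac_W V,\Fac_W V']\subset\tors W$ is identified, via $\F_V$, with $[0_{W'},\Fac_{W'}U_g]\subset\tors W'$ in a label-preserving fashion, where $U_g$ denotes the module part of $U$. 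Thus the initial segment computes $F[U]$, giving $F([U]\circ[V])=F[U]\cdot F[V]$.

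For faithfulness, suppose $[U_1],[U_2]\in\Hom(W_1,W_2)$ satisfy $F[U_1]=F[U_2]$. I would fix a common presentation $W_1=J(N\sqcup Q[1])$ and write $U_i=M_i\sqcup P_i[1]$. Setting $\T_i:=\F_{N\sqcup Q[1]}^{-1}(\Fac_{W_1}M_i)$, Proposition~\ref{prop:presentations}(c) yields $F[U_i]\cdot g_{\Fac N}=g_{\T_i}$, so the hypothesis forces $g_{\T_1}=g_{\T_2}$, and Lemma~\ref{lem:faithful}(b) then gives $\T_1=\T_2$; equivalently, $\Fac_{W_1}M_1=\Fac_{W_1}M_2$. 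Let $\overline U\in\stt W_1$ be the unique support $\tau$-tilting pair corresponding to this common torsion class under the bijection $\tors W_1\cong\stt W_1$. Then $U_1$ and $U_2$ are both direct summands of $\overline U$, and by Theorem~\ref{thm:bigbijection} the morphism $[\overline U]\colon W_1\to 0_{W_1}$ factors through each $[U_i]$. By Corollary~\ref{cor:cubeembedding}, $Fac[\overline U]\cong\mathcal{I}^{rk(\overline U)}$ embeds faithfully into $\W(\Lambda)$, so subsets of indecomposable summands of $\overline U$ are distinguished by the intermediate wide subcategory they produce; since both $U_1$ and $U_2$ produce $W_2$, they coincide as subsets of $\overline U$, forcing $[U_1]=[U_2]$.

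The main technical obstacle is the composition step: careful bookkeeping is needed to confirm, via Lemma~\ref{lem:welldef}(b), that $\F_V(\Fac_W V')=\Fac_{W'}U_g$, taking proper account of which summands of $(\E_V^W)^{-1}(U)$ are sent to modules versus shifted modules by $\E_V^W$. Once this identification is secured, the remaining arguments reduce to the distinctness of the $g_\T$ supplied by Lemma~\ref{lem:faithful}(b) and to the cubical structure of $\W(\Lambda)$ established in Corollary~\ref{cor:cubeembedding}.
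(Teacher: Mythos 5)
Your proposal follows essentially the same route as the paper: well-definedness from Proposition \ref{prop:presentations} combined with the label-preserving isomorphism of Lemma \ref{lem:welldef}, compatibility with composition by splitting a path at the intermediate torsion class and transporting the remaining segment via Lemma \ref{lem:welldef}(b), and faithfulness from Lemma \ref{lem:faithful}(b). Your faithfulness step is in fact more complete than the paper's one-line citation, since you explain (via Lemma \ref{lem:cubeembedding}(a) and Corollary \ref{cor:cubeembedding}) why equality of the torsion classes $\Fac M_1=\Fac M_2$ together with equality of the target wide subcategories forces $[U_1]=[U_2]$.
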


\begin{proof}
	Let $[M\sqcup P[1]]\in \mathrm{Hom}_{\mathfrak{W}(\Lambda)}(W,W')$. By Theorem \ref{thm:basicWide}, write $W = J(N\sqcup Q[1])$ and let $M' \sqcup P'[1] = \mathcal{E}\inv_{N\sqcup Q[1]}(M\sqcup P[1])$. Now suppose $(S_1,\ldots,S_k)$ and $(S_1'\ldots,S_{k'}')$ both label sequences $\mathsf{Fac} M \rightarrow 0$ in $\mathsf{tors} W$. By Lemma \ref{lem:welldef}, both $(S_1,\ldots,S_k)$ and $(S_1',\ldots,S'_{k'})$ label sequences $\mathsf{Fac} (N \sqcup M')\rightarrow \mathsf{Fac}(N)$ in $\mathsf{tors}\Lambda$. Proposition \ref{prop:presentations} then implies $X_{S_1}\cdots X_{S_k} = X_{S'_1}\cdots X_{S'_{k'}}$ in $G(\Lambda)$. Thus $F$ is well-defined.
	
	To see that $F$ is faithful, let $[M_1\sqcup P_1[1]], [M_2\sqcup P_2[1]] \in \mathrm{Hom}_{\mathfrak{W}(\Lambda)}(W,W')$.  As before, let $M_1'\sqcup P_1'[1] =  \mathcal{E}\inv_{N\sqcup Q[1]}(M_1\sqcup P_1[1])$ and likewise for $M_2'\sqcup P_2'[1]$. Then by Proposition \ref{prop:presentations}, we can write $F[M_1\sqcup P_1[1]] = g_{\mathsf{Fac}(N\sqcup M_1')}\cdot g_{\mathsf{Fac} N}\inv$ and $F[M_2\sqcup P_2[1]] = g_{\mathsf{Fac}(N\sqcup M_2')}\cdot g_{\mathsf{Fac} N}\inv$. By Lemma \ref{lem:faithful}, these are the same group element if and only if $\mathsf{Fac}(N\sqcup M'_1) = \mathsf{Fac}(N\sqcup M'_2)$. This means the module part of $\mathcal{E}_{N\sqcup Q[1]}(M_1')$, which is $M_1$, is isomorphic to the module part of $\mathcal{E}_{N\sqcup Q[1]}(M_2')$, which is $M_2$. Since $J_W (M_1\sqcup P_1) = J_W(M_2\sqcup P_2)$, it follows that $P_1[1] \cong P_2[1]$, as well. 
	
Finally, to show $F$ preserves the composition law, let
$$W\xrightarrow{[M\sqcup P[1]]}W'\xrightarrow{[M'\sqcup P[1]]}W''$$
be a sequence of composable morphisms. Let $(S_1,\ldots,S_k)$ label a path $\mathsf{Fac} M\rightarrow 0$ in $\mathsf{tors} W$ and let $(T_1,\ldots,T_l)$ label a path $\mathsf{Fac} M'\rightarrow 0$ in $\mathsf{tors} W'$. It then follows from Lemma \ref{lem:welldef} that $(T_1,\ldots,T_l)$ labels a path
$$\mathsf{Fac}(M\sqcup\left(\mathcal{E}^W_{M\sqcup P[1]}\right)\inv(M'))\rightarrow \mathsf{Fac} M$$
in $\mathsf{Hasse}(\mathsf{tors} W)$. Thus the composition $(T_1,\ldots,T_l,S_1,\ldots,S_k)$ labels a path
$$\mathsf{Fac}(M\sqcup\left(\mathcal{E}^W_{M\sqcup P[1]}\right)\inv(M'))\rightarrow 0$$
in $\mathsf{Hasse}(\mathsf{tors} W)$. We conclude that $F([M'\sqcup P'[1]]\circ[M\sqcup P[1]]) = F[M'\sqcup P'[1]]\cdot F[M\sqcup P[1]]$.
\end{proof}

We are now ready to conclude our final main result (Theorem C in the introduction).

\begin{thm}
\label{thm:cat0}
For a Nakayama algebra $\Lambda$, the picture space (i.e., the classifying space of the category $\mathfrak{W}(\Lambda)$) is a locally $\mathrm{CAT}(0)$ cube complex and thus is a $K(\pi,1)$ for the picture group $G(\Lambda)$.
\end{thm}

\begin{proof}
	Let $\Lambda$ be a Nakayama algebra. We recall that we need only show that $\mathfrak{W}(\Lambda)$ satisfies conditions (a)-(c) in Proposition \ref{prop:npc}. As discussed in the paragraph following Proposition \ref{prop:npc}, condition (a) follows immediately from the definition of a $\tau$-rigid pair. Moreover, condition (b) follows from Theorem \ref{thm:nakayamapairwise} and Lemma \ref{lem:fewerlasts2}. Finally, we have shown condition (c) explicitly in Theorem \ref{thm:faithful}.
\end{proof}


\section{Examples of Theorem \ref{thm:pairwisesmc}}
\label{sec:examples}

\begin{ex}
Let $A_3$ be given straight orientation. The maximal arc patterns and corresponding 2-simple minded collections for $KA_3 \cong \Lambda(3,3,2,1)$ are shown in Table~\ref{table1}.
\end{ex}

{\footnotesize
\begin{center}
\begin{table}
\begin{tabular}{|c|c||c|c||c|c|}
\hline
MAP & 2-smc & MAP & 2-smc & MAP & 2-smc\\
\hline
\begin{tikzpicture}[scale=0.35,baseline=0]
	\draw (0,0) circle[radius=2cm];
	\draw (-.1,-.1) to (.1,.1);
	\draw (-.1,.1) to (.1,-.1);
		\draw[very thick] plot [smooth] coordinates{(0,2)(-1.73,-1)};
		\draw[very thick] plot [smooth] coordinates{(0,2)(1.73,-1)};
		\draw[very thick] plot [smooth] coordinates{(1.73,-1)(-1.73,-1)};
		\node at (0,2) [draw,fill,circle,scale=0.4,label=above:1]{};
		\node at (-1.73,-1) [draw,fill,circle,scale=0.4,label=south west:2]{};
		\node at (1.73,-1) [draw,fill,circle,scale=0.4,label=south east:3]{};
	\node at (0,2.5){};
	\node at (0,-2.5){};
\end{tikzpicture}
& {\scriptsize $1\sqcup2\sqcup3$} & 
\begin{tikzpicture}[scale=0.35,baseline=0]
	\draw (0,0) circle[radius=2cm];
	\draw (-.1,-.1) to (.1,.1);
	\draw (-.1,.1) to (.1,-.1);
		\draw[very thick,dashed] plot [smooth] coordinates{(0,2)(-1.73,-1)};
		\draw[very thick,dashed] plot [smooth] coordinates{(0,2)(1.73,-1)};
		\draw[very thick,dashed] plot [smooth] coordinates{(1.73,-1)(-1.73,-1)};
		\node at (0,2) [draw,fill,circle,scale=0.4,label=above:1]{};
		\node at (-1.73,-1) [draw,fill,circle,scale=0.4,label=south west:2]{};
		\node at (1.73,-1) [draw,fill,circle,scale=0.4,label=south east:3]{};
	\node at (0,2.5){};
	\node at (0,-2.5){};
\end{tikzpicture}
& {\scriptsize $1[1]\sqcup2[1]\sqcup3[1]$}
&\begin{tikzpicture}[scale=0.35,baseline=0]
	\draw (0,0) circle[radius=2cm];
	\draw (-.1,-.1) to (.1,.1);
	\draw (-.1,.1) to (.1,-.1);
		\draw[very thick] plot [smooth] coordinates{(0,2)(-1,1)(-1.73,-1)};
		\draw[very thick,dashed] plot [smooth] coordinates{(0,2)(0.5,0)(0,-0.5)(-0.5,0)(0,2)};
		\draw[very thick] plot [smooth] coordinates{(1.73,-1)(-1.73,-1)};
		\node at (0,2) [draw,fill,circle,scale=0.4,label=above:1]{};
		\node at (-1.73,-1) [draw,fill,circle,scale=0.4,label=south west:2]{};
		\node at (1.73,-1) [draw,fill,circle,scale=0.4,label=south east:3]{};
	\node at (0,2.5){};
	\node at (0,-2.5){};
\end{tikzpicture}
& {\scriptsize $1\sqcup2\sqcup\begin{matrix}1\\2\\3\end{matrix}[1]$} \\
\hline 
\begin{tikzpicture}[scale=0.35,baseline=0]
	\draw (0,0) circle[radius=2cm];
	\draw (-.1,-.1) to (.1,.1);
	\draw (-.1,.1) to (.1,-.1);
		\draw[very thick,dashed] plot [smooth] coordinates{(0,2)(-1.73,-1)};
		\draw[very thick] plot [smooth] coordinates{(0,2)(1.73,-1)};
		\draw[very thick,dashed] plot [smooth] coordinates{(1.73,-1)(-1.73,-1)};
		\node at (0,2) [draw,fill,circle,scale=0.4,label=above:1]{};
		\node at (-1.73,-1) [draw,fill,circle,scale=0.4,label=south west:2]{};
		\node at (1.73,-1) [draw,fill,circle,scale=0.4,label=south east:3]{};
	\node at (0,2.5){};
	\node at (0,-2.5){};
\end{tikzpicture}
& {\scriptsize $3\sqcup 1[1]\sqcup 2[1]$}

&\begin{tikzpicture}[scale=0.35,baseline=0]
	\draw (0,0) circle[radius=2cm];
	\draw (-.1,-.1) to (.1,.1);
	\draw (-.1,.1) to (.1,-.1);
		\draw[very thick,dashed] plot [smooth] coordinates{(0,2)(-1.73,-1)};
		\draw[very thick] plot [smooth] coordinates{(0,2)(1.73,-1)};
		\draw[very thick] plot [smooth] coordinates{(1.73,-1)(-1.73,-1)};
		\node at (0,2) [draw,fill,circle,scale=0.4,label=above:1]{};
		\node at (-1.73,-1) [draw,fill,circle,scale=0.4,label=south west:2]{};
		\node at (1.73,-1) [draw,fill,circle,scale=0.4,label=south east:3]{};
	\node at (0,2.5){};
	\node at (0,-2.5){};
\end{tikzpicture}
& {\scriptsize $2\sqcup3\sqcup1[1]$} & 
\begin{tikzpicture}[scale=0.35,baseline=0]
	\draw (0,0) circle[radius=2cm];
	\draw (-.1,-.1) to (.1,.1);
	\draw (-.1,.1) to (.1,-.1);
		\draw[very thick] plot [smooth] coordinates{(0,2)(0.5,0)(0,-0.5)(-0.5,0)(0,2)};
		\draw[very thick,dashed] plot [smooth] coordinates{(0,2)(1,1)(1.73,-1)};
		\draw[very thick,dashed] plot [smooth] coordinates{(1.73,-1)(0,-1.5)(-1.73,-1)};
		\node at (0,2) [draw,fill,circle,scale=0.4,label=above:1]{};
		\node at (-1.73,-1) [draw,fill,circle,scale=0.4,label=south west:2]{};
		\node at (1.73,-1) [draw,fill,circle,scale=0.4,label=south east:3]{};
	\node at (0,2.5){};
	\node at (0,-2.5){};
\end{tikzpicture}
& {\scriptsize $\begin{matrix}1\\2\\3\end{matrix}\sqcup2[1]\sqcup3[1]$}\\
\hline

\begin{tikzpicture}[scale=0.35,baseline=0]
	\draw (0,0) circle[radius=2cm];
	\draw (-.1,-.1) to (.1,.1);
	\draw (-.1,.1) to (.1,-.1);
		\draw[very thick] plot [smooth] coordinates{(0,2)(-1,1)(-1.73,-1)};
		\draw[very thick] plot [smooth] coordinates{(0,2)(1.73,-1)};
		\draw[very thick,dashed] plot [smooth] coordinates{(1.73,-1)(-0.5,-0.5)(0,2)};
		\node at (0,2) [draw,fill,circle,scale=0.4,label=above:1]{};
		\node at (-1.73,-1) [draw,fill,circle,scale=0.4,label=south west:2]{};
		\node at (1.73,-1) [draw,fill,circle,scale=0.4,label=south east:3]{};
	\node at (0,2.5){};
	\node at (0,-2.5){};
\end{tikzpicture}
& {\scriptsize $1\sqcup3\sqcup\begin{matrix}1\\2\end{matrix}[1]$} & 
\begin{tikzpicture}[scale=0.35,baseline=0]
	\draw (0,0) circle[radius=2cm];
	\draw (-.1,-.1) to (.1,.1);
	\draw (-.1,.1) to (.1,-.1);
		\draw[very thick] plot [smooth] coordinates{(0,2)(0.5,-0.5)(-1.73,-1)};
		\draw[very thick,dashed] plot [smooth] coordinates{(0,2)(1,1)(1.73,-1)};
		\draw[very thick,dashed] plot [smooth] coordinates{(0,2)(-1.73,-1)};
		\node at (0,2) [draw,fill,circle,scale=0.4,label=above:1]{};
		\node at (-1.73,-1) [draw,fill,circle,scale=0.4,label=south west:2]{};
		\node at (1.73,-1) [draw,fill,circle,scale=0.4,label=south east:3]{};
	\node at (0,2.5){};
	\node at (0,-2.5){};
\end{tikzpicture}
& {\scriptsize $\begin{matrix}2\\3\end{matrix}\sqcup1[1]\sqcup3[1]$}

&\begin{tikzpicture}[scale=0.35,baseline=0]
	\draw (0,0) circle[radius=2cm];
	\draw (-.1,-.1) to (.1,.1);
	\draw (-.1,.1) to (.1,-.1);
		\draw[very thick] plot [smooth] coordinates{(0,2)(-1.73,-1)};
		\draw[very thick,dashed] plot [smooth] coordinates{(0,2)(1,1)(1.73,-1)};
		\draw[very thick] plot [smooth] coordinates{(0,2)(0.5,-0.5)(-1.73,-1)};
		\node at (0,2) [draw,fill,circle,scale=0.4,label=above:1]{};
		\node at (-1.73,-1) [draw,fill,circle,scale=0.4,label=south west:2]{};
		\node at (1.73,-1) [draw,fill,circle,scale=0.4,label=south east:3]{};
	\node at (0,2.5){};
	\node at (0,-2.5){};
\end{tikzpicture}
& {\scriptsize $1\sqcup\begin{matrix}2\\3\end{matrix}\sqcup3[1]$} \\
\hline
\begin{tikzpicture}[scale=0.35,baseline=0]
	\draw (0,0) circle[radius=2cm];
	\draw (-.1,-.1) to (.1,.1);
	\draw (-.1,.1) to (.1,-.1);
		\draw[very thick,dashed] plot [smooth] coordinates{(0,2)(0.5,-0.5)(-1.73,-1)};
		\draw[very thick] plot [smooth] coordinates{(-1.73,-1)(0,-1.5)(1.73,-1)};
		\draw[very thick,dashed] plot [smooth] coordinates{(0,2)(-1.73,-1)};
		\node at (0,2) [draw,fill,circle,scale=0.4,label=above:1]{};
		\node at (-1.73,-1) [draw,fill,circle,scale=0.4,label=south west:2]{};
		\node at (1.73,-1) [draw,fill,circle,scale=0.4,label=south east:3]{};
	\node at (0,2.5){};
	\node at (0,-2.5){};
\end{tikzpicture}
& {\scriptsize $2\sqcup1[1]\sqcup\begin{matrix}2\\3\end{matrix}[1]$}

&\begin{tikzpicture}[scale=0.35,baseline=0]
	\draw (0,0) circle[radius=2cm];
	\draw (-.1,-.1) to (.1,.1);
	\draw (-.1,.1) to (.1,-.1);
		\draw[very thick] plot [smooth] coordinates{(0,2)(0.5,0)(0,-0.5)(-0.5,0)(0,2)};
		\draw[very thick,dashed] plot [smooth] coordinates{(0,2)(1,1)(1,-0.5)(-1.73,-1)};
		\draw[very thick] plot [smooth] coordinates{(1.73,-1)(0,-1.5)(-1.73,-1)};
		\node at (0,2) [draw,fill,circle,scale=0.4,label=above:1]{};
		\node at (-1.73,-1) [draw,fill,circle,scale=0.4,label=south west:2]{};
		\node at (1.73,-1) [draw,fill,circle,scale=0.4,label=south east:3]{};
	\node at (0,2.5){};
	\node at (0,-2.5){};
\end{tikzpicture}
& {\scriptsize $2\sqcup\begin{matrix}1\\2\\3\end{matrix}\sqcup\begin{matrix}2\\3\end{matrix}[1]$} &
\begin{tikzpicture}[scale=0.35,baseline=0]
	\draw (0,0) circle[radius=2cm];
	\draw (-.1,-.1) to (.1,.1);
	\draw (-.1,.1) to (.1,-.1);
		\draw[very thick,dashed] plot [smooth] coordinates{(0,2)(0.5,0)(0,-0.5)(-0.5,0)(0,2)};
		\draw[very thick] plot [smooth] coordinates{(0,2)(-1.5,-0.5)(1.73,-1)};
		\draw[very thick,dashed] plot [smooth] coordinates{(1.73,-1)(0,-1.5)(-1.73,-1)};
		\node at (0,2) [draw,fill,circle,scale=0.4,label=above:1]{};
		\node at (-1.73,-1) [draw,fill,circle,scale=0.4,label=south west:2]{};
		\node at (1.73,-1) [draw,fill,circle,scale=0.4,label=south east:3]{};
	\node at (0,2.5){};
	\node at (0,-2.5){};
\end{tikzpicture}
& {\scriptsize $\begin{matrix}1\\2\end{matrix}\sqcup2[1]\sqcup\begin{matrix}1\\2\\3\end{matrix}[1]$}\\

\hline

\begin{tikzpicture}[scale=0.35,baseline=0]
	\draw (0,0) circle[radius=2cm];
	\draw (-.1,-.1) to (.1,.1);
	\draw (-.1,.1) to (.1,-.1);
		\draw[very thick] plot [smooth] coordinates{(0,2)(-1,-0.5)(1.73,-1)};
		\draw[very thick] plot [smooth] coordinates{(0,2)(1.73,-1)};
		\draw[very thick,dashed] plot [smooth] coordinates{(1.73,-1)(0,-1.5)(-1.73,-1)};
		\node at (0,2) [draw,fill,circle,scale=0.4,label=above:1]{};
		\node at (-1.73,-1) [draw,fill,circle,scale=0.4,label=south west:2]{};
		\node at (1.73,-1) [draw,fill,circle,scale=0.4,label=south east:3]{};
	\node at (0,2.5){};
	\node at (0,-2.5){};
\end{tikzpicture}
& {\scriptsize $3\sqcup\begin{matrix}1\\2\end{matrix}\sqcup2[1]$} &
\begin{tikzpicture}[scale=0.35,baseline=0]
	\draw (0,0) circle[radius=2cm];
	\draw (-.1,-.1) to (.1,.1);
	\draw (-.1,.1) to (.1,-.1);
		\draw[very thick] plot [smooth] coordinates{(0,2)(-1,1)(-1.73,-1)};
		\draw[very thick,dashed] plot [smooth] coordinates{(0,2)(1.73,-1)};
		\draw[very thick,dashed] plot [smooth] coordinates{(1.73,-1)(-0.5,-0.5)(0,2)};
		\node at (0,2) [draw,fill,circle,scale=0.4,label=above:1]{};
		\node at (-1.73,-1) [draw,fill,circle,scale=0.4,label=south west:2]{};
		\node at (1.73,-1) [draw,fill,circle,scale=0.4,label=south east:3]{};
	\node at (0,2.5){};
	\node at (0,-2.5){};
\end{tikzpicture}
& {\scriptsize$1\sqcup3[1]\sqcup\begin{matrix}1\\2\end{matrix}[1]$}& \multicolumn{2}{c|}{}\\
\hline

\end{tabular}
\caption{The maximal arc patterns (MAP) and 2-simple minded collections (2-smc) for $KA_3 \cong \Lambda(3,3,2,1)$. Recall that green arcs are drawn as solid and red arcs are drawn as dashed.}\label{table1}
\end{table}
\end{center}}


\begin{ex}
The maximal arc patterns and corresponding 2-simple minded collections for $\Lambda(4,3,3,3,3)$, which is cluster tilted of type $D_4$, are shown in Table~\ref{table2}.
\end{ex}


{\footnotesize
\begin{center}
\begin{table}
\begin{tabular}{|c|c|c||c|c|c|}
\hline
MAP & 2-smc & Perms & MAP & 2-smc & Perms\\
\hline
\begin{tikzpicture}[scale=0.45,baseline=0]
	\draw (0,0) circle[radius=2cm];
	\draw (-.1,-.1) to (.1,.1);
	\draw (-.1,.1) to (.1,-.1);
		\draw[very thick] plot [smooth] coordinates{(-1.42,1.42)(-1.42,-1.42)};
		\draw[very thick] plot [smooth] coordinates{(-1.42,-1.42)(1.42,-1.42)};
		\draw[very thick] plot [smooth] coordinates{(1.42,-1.42)(1.42,1.42)};
		\draw[very thick] plot [smooth] coordinates{(1.42,1.42)(-1.42,1.42)};
		\node at (-1.42,1.42) [draw,fill,circle,scale=0.4,label=above:1]{};
		\node at (-1.42,-1.42) [draw,fill,circle,scale=0.4,label=south:2]{};
		\node at (1.42,-1.42) [draw,fill,circle,scale=0.4,label=south:3]{};
		\node at (1.42,1.42) [draw,fill,circle,scale=0.4,label=above:4]{};
	\node at (0,2.5){};
	\node at (0,-2.5){};
\end{tikzpicture}
& {\scriptsize $1\sqcup2\sqcup3\sqcup4$} & 1 &
\begin{tikzpicture}[scale=0.45,baseline=0]
	\draw (0,0) circle[radius=2cm];
	\draw (-.1,-.1) to (.1,.1);
	\draw (-.1,.1) to (.1,-.1);
		\draw[very thick,dashed] plot [smooth] coordinates{(-1.42,1.42)(-1.42,-1.42)};
		\draw[very thick,dashed] plot [smooth] coordinates{(-1.42,-1.42)(1.42,-1.42)};
		\draw[very thick,dashed] plot [smooth] coordinates{(1.42,-1.42)(1.42,1.42)};
		\draw[very thick,dashed] plot [smooth] coordinates{(1.42,1.42)(-1.42,1.42)};
		\node at (-1.42,1.42) [draw,fill,circle,scale=0.4,label=above:1]{};
		\node at (-1.42,-1.42) [draw,fill,circle,scale=0.4,label=south:2]{};
		\node at (1.42,-1.42) [draw,fill,circle,scale=0.4,label=south:3]{};
		\node at (1.42,1.42) [draw,fill,circle,scale=0.4,label=above:4]{};
	\node at (0,2.5){};
	\node at (0,-2.5){};
\end{tikzpicture}
& {\scriptsize $1[1]\sqcup2[1]\sqcup3[1]\sqcup4[1]$} & 1\\

\hline

\begin{tikzpicture}[scale=0.45,baseline=0]
	\draw (0,0) circle[radius=2cm];
	\draw (-.1,-.1) to (.1,.1);
	\draw (-.1,.1) to (.1,-.1);
	\begin{scope}
		\draw[very thick] plot [smooth] coordinates{(-1.42,1.42)(-1.42,-1.42)};
		\draw[very thick] plot [smooth] coordinates{(-1.42,-1.42)(1.42,-1.42)};
		\draw[very thick] plot [smooth] coordinates{(1.42,-1.42)(1.42,1.42)};
		\draw[very thick,dashed] plot [smooth] coordinates{(-1.42,-1.42)(0.5,0)(-1.42,1.42)};
	\end{scope}
		\node at (-1.42,1.42) [draw,fill,circle,scale=0.4,label=above:1]{};
		\node at (-1.42,-1.42) [draw,fill,circle,scale=0.4,label=south:2]{};
		\node at (1.42,-1.42) [draw,fill,circle,scale=0.4,label=south:3]{};
		\node at (1.42,1.42) [draw,fill,circle,scale=0.4,label=above:4]{};
	\node at (0,2.5){};
	\node at (0,-2.5){};
\end{tikzpicture}
& {\scriptsize $1\sqcup2\sqcup3\sqcup\begin{matrix}2\\3\\4\end{matrix}[1]$} & 4 &
\begin{tikzpicture}[scale=0.45,baseline=0]
	\draw (0,0) circle[radius=2cm];
	\draw (-.1,-.1) to (.1,.1);
	\draw (-.1,.1) to (.1,-.1);
	\begin{scope}
		\draw[very thick,dashed] plot [smooth] coordinates{(-1.42,1.42)(-1.42,-1.42)};
		\draw[very thick,dashed] plot [smooth] coordinates{(-1.42,-1.42)(1.42,-1.42)};
		\draw[very thick,dashed] plot [smooth] coordinates{(1.42,-1.42)(1.42,1.42)};
		\draw[very thick] plot [smooth] coordinates{(1.42,1.42)(-0.5,0)(1.42,-1.42)};
	\end{scope}
		\node at (-1.42,1.42) [draw,fill,circle,scale=0.4,label=above:1]{};
		\node at (-1.42,-1.42) [draw,fill,circle,scale=0.4,label=south:2]{};
		\node at (1.42,-1.42) [draw,fill,circle,scale=0.4,label=south:3]{};
		\node at (1.42,1.42) [draw,fill,circle,scale=0.4,label=above:4]{};
	\node at (0,2.5){};
	\node at (0,-2.5){};
\end{tikzpicture}
& {\scriptsize $\begin{matrix}4\\1\\2\end{matrix}\sqcup1[1]\sqcup2[1]\sqcup3[1]\sqcup$} & 4\\

\hline

\begin{tikzpicture}[scale=0.45,baseline=0]
	\draw (0,0) circle[radius=2cm];
	\draw (-.1,-.1) to (.1,.1);
	\draw (-.1,.1) to (.1,-.1);
		\draw[very thick] plot [smooth] coordinates{(-1.42,1.42)(-1.42,-1.42)};
		\draw[very thick] plot [smooth] coordinates{(-1.42,-1.42)(1.42,-1.42)};
		\draw[very thick] plot [smooth] coordinates{(1.42,-1.42)(0.3,0.3)(-1.42,1.42)};
		\draw[very thick,dashed] plot [smooth] coordinates{(1.42,1.42)(-1.42,1.42)};
		\node at (-1.42,1.42) [draw,fill,circle,scale=0.4,label=above:1]{};
		\node at (-1.42,-1.42) [draw,fill,circle,scale=0.4,label=south:2]{};
		\node at (1.42,-1.42) [draw,fill,circle,scale=0.4,label=south:3]{};
		\node at (1.42,1.42) [draw,fill,circle,scale=0.4,label=above:4]{};
	\node at (0,2.5){};
	\node at (0,-2.5){};
\end{tikzpicture}
& {\scriptsize $1\sqcup2\sqcup\begin{matrix}3\\4\end{matrix}\sqcup4[1]$} & 4 &
\begin{tikzpicture}[scale=0.45,baseline=0]
	\draw (0,0) circle[radius=2cm];
	\draw (-.1,-.1) to (.1,.1);
	\draw (-.1,.1) to (.1,-.1);
		\draw[very thick,dashed] plot [smooth] coordinates{(-1.42,1.42)(-1.42,-1.42)};
		\draw[very thick,dashed] plot [smooth] coordinates{(-1.42,-1.42)(1.42,-1.42)};
		\draw[very thick,dashed] plot [smooth] coordinates{(1.42,-1.42)(0.3,0.3)(-1.42,1.42)};
		\draw[very thick] plot [smooth] coordinates{(1.42,-1.42)(1.42,1.42)};
		\node at (-1.42,1.42) [draw,fill,circle,scale=0.4,label=above:1]{};
		\node at (-1.42,-1.42) [draw,fill,circle,scale=0.4,label=south:2]{};
		\node at (1.42,-1.42) [draw,fill,circle,scale=0.4,label=south:3]{};
		\node at (1.42,1.42) [draw,fill,circle,scale=0.4,label=above:4]{};
	\node at (0,2.5){};
	\node at (0,-2.5){};
\end{tikzpicture}
& {\scriptsize $3\sqcup1[1]\sqcup2[1]\sqcup\begin{matrix}3\\4\end{matrix}[1]$} & 4\\
\hline

\begin{tikzpicture}[scale=0.45,baseline=0]
	\draw (0,0) circle[radius=2cm];
	\draw (-.1,-.1) to (.1,.1);
	\draw (-.1,.1) to (.1,-.1);
		\draw[very thick] plot [smooth] coordinates{(-1.42,1.42)(-1.42,-1.42)};
		\draw[very thick] plot [smooth] coordinates{(-1.42,-1.42)(0.5,0)(-1.42,1.42)};
		\draw[very thick] plot [smooth] coordinates{(1.42,-1.42)(1.42,1.42)};
		\draw[very thick,dashed] plot [smooth] coordinates{(1.42,-1.42)(0.3,1)(-1.42,1.42)};
		\node at (-1.42,1.42) [draw,fill,circle,scale=0.4,label=above:1]{};
		\node at (-1.42,-1.42) [draw,fill,circle,scale=0.4,label=south:2]{};
		\node at (1.42,-1.42) [draw,fill,circle,scale=0.4,label=south:3]{};
		\node at (1.42,1.42) [draw,fill,circle,scale=0.4,label=above:4]{};
	\node at (0,2.5){};
	\node at (0,-2.5){};
\end{tikzpicture}
& {\scriptsize $1\sqcup3\sqcup\begin{matrix}2\\3\\4\end{matrix}\sqcup\begin{matrix}3\\4\end{matrix}[1]$} & 4 &
\begin{tikzpicture}[scale=0.45,baseline=0]
	\draw (0,0) circle[radius=2cm];
	\draw (-.1,-.1) to (.1,.1);
	\draw (-.1,.1) to (.1,-.1);
		\draw[very thick,dashed] plot [smooth] coordinates{(-1.42,1.42)(-1.42,-1.42)};
		\draw[very thick,dashed] plot [smooth] coordinates{(-1.42,-1.42)(0.5,0)(-1.42,1.42)};
		\draw[very thick,dashed] plot [smooth] coordinates{(1.42,-1.42)(1.42,1.42)};
		\draw[very thick] plot [smooth] coordinates{(1.42,1.42)(0.3,-1)(-1.42,-1.42)};
		\node at (-1.42,1.42) [draw,fill,circle,scale=0.4,label=above:1]{};
		\node at (-1.42,-1.42) [draw,fill,circle,scale=0.4,label=south:2]{};
		\node at (1.42,-1.42) [draw,fill,circle,scale=0.4,label=south:3]{};
		\node at (1.42,1.42) [draw,fill,circle,scale=0.4,label=above:4]{};
	\node at (0,2.5){};
	\node at (0,-2.5){};
\end{tikzpicture}
& {\scriptsize $\begin{matrix}2\\3\end{matrix}\sqcup1[1]\sqcup3[1]\sqcup\begin{matrix}2\\3\\4\end{matrix}[1]$}& 4\\

\hline

\begin{tikzpicture}[scale=0.45,baseline=0]
	\draw (0,0) circle[radius=2cm];
	\draw (-.1,-.1) to (.1,.1);
	\draw (-.1,.1) to (.1,-.1);
	\begin{scope}
		\draw[very thick] plot [smooth] coordinates{(-1.42,1.42)(-1.42,-1.42)};
		\draw[very thick] plot [smooth] coordinates{(-1.42,-1.42)(.5,0)(-1.42,1.42)};
		\draw[very thick,dashed] plot [smooth] coordinates{(1.42,-1.42)(1.42,1.42)};
		\draw[very thick,dashed] plot [smooth] coordinates{(1.42,1.42)(-1.42,1.42)};
	\end{scope}
		\node at (-1.42,1.42) [draw,fill,circle,scale=0.4,label=above:1]{};
		\node at (-1.42,-1.42) [draw,fill,circle,scale=0.4,label=south:2]{};
		\node at (1.42,-1.42) [draw,fill,circle,scale=0.4,label=south:3]{};
		\node at (1.42,1.42) [draw,fill,circle,scale=0.4,label=above:4]{};
	\node at (0,2.5){};
	\node at (0,-2.5){};
\end{tikzpicture}
& {\scriptsize $1\sqcup\begin{matrix}2\\3\\4\end{matrix}\sqcup 3[1]\sqcup 4[1]$} & 4 &
\begin{tikzpicture}[scale=0.45,baseline=0]
	\draw (0,0) circle[radius=2cm];
	\draw (-.1,-.1) to (.1,.1);
	\draw (-.1,.1) to (.1,-.1);
	\begin{scope}
		\draw[very thick,dashed] plot [smooth] coordinates{(-1.42,1.42)(-1.42,-1.42)};
		\draw[very thick,dashed] plot [smooth] coordinates{(-1.42,-1.42)(.5,0)(-1.42,1.42)};
		\draw[very thick] plot [smooth] coordinates{(1.42,-1.42)(1.42,1.42)};
		\draw[very thick] plot [smooth] coordinates{(1.42,-1.42)(-1.42,-1.42)};
	\end{scope}
		\node at (-1.42,1.42) [draw,fill,circle,scale=0.4,label=above:1]{};
		\node at (-1.42,-1.42) [draw,fill,circle,scale=0.4,label=south:2]{};
		\node at (1.42,-1.42) [draw,fill,circle,scale=0.4,label=south:3]{};
		\node at (1.42,1.42) [draw,fill,circle,scale=0.4,label=above:4]{};
	\node at (0,2.5){};
	\node at (0,-2.5){};
\end{tikzpicture}
& {\scriptsize $2\sqcup 3\sqcup1[1]\sqcup\begin{matrix}2\\3\\4\end{matrix}[1]$} & 4\\

\hline

\begin{tikzpicture}[scale=0.45,baseline=0]
	\draw (0,0) circle[radius=2cm];
	\draw (-.1,-.1) to (.1,.1);
	\draw (-.1,.1) to (.1,-.1);
		\draw[very thick] plot [smooth] coordinates{(-1.42,1.42)(-1.42,-1.42)};
		\draw[very thick] plot [smooth] coordinates{(1.42,1.42)(-.5,0)(1.42,-1.42)};
		\draw[very thick,dashed] plot [smooth] coordinates{(1.42,-1.42)(1.42,1.42)};
		\draw[very thick,dashed] plot [smooth] coordinates{(-1.42,1.42)(-.5,-1)(1.42,-1.42)};
		\node at (-1.42,1.42) [draw,fill,circle,scale=0.4,label=above:1]{};
		\node at (-1.42,-1.42) [draw,fill,circle,scale=0.4,label=south:2]{};
		\node at (1.42,-1.42) [draw,fill,circle,scale=0.4,label=south:3]{};
		\node at (1.42,1.42) [draw,fill,circle,scale=0.4,label=above:4]{};
	\node at (0,2.5){};
	\node at (0,-2.5){};
\end{tikzpicture}
& {\scriptsize $1\sqcup\begin{matrix}4\\1\\2\end{matrix}\sqcup3[1]\sqcup\begin{matrix}1\\2\end{matrix}[1]$} & 4 &
\begin{tikzpicture}[scale=0.45,baseline=0]
	\draw (0,0) circle[radius=2cm];
	\draw (-.1,-.1) to (.1,.1);
	\draw (-.1,.1) to (.1,-.1);
		\draw[very thick,dashed] plot [smooth] coordinates{(-1.42,1.42)(-1.42,-1.42)};
		\draw[very thick,dashed] plot [smooth] coordinates{(1.42,1.42)(-.5,0)(1.42,-1.42)};
		\draw[very thick] plot [smooth] coordinates{(1.42,-1.42)(1.42,1.42)};
		\draw[very thick] plot [smooth] coordinates{(-1.42,-1.42)(-.5,1)(1.42,1.42)};
		\node at (-1.42,1.42) [draw,fill,circle,scale=0.4,label=above:1]{};
		\node at (-1.42,-1.42) [draw,fill,circle,scale=0.4,label=south:2]{};
		\node at (1.42,-1.42) [draw,fill,circle,scale=0.4,label=south:3]{};
		\node at (1.42,1.42) [draw,fill,circle,scale=0.4,label=above:4]{};
	\node at (0,2.5){};
	\node at (0,-2.5){};
\end{tikzpicture}
& {\scriptsize $3\sqcup\begin{matrix}4\\1\end{matrix}\sqcup1[1]\sqcup\begin{matrix}4\\1\\2\end{matrix}[1]$} & 4\\
\hline

\begin{tikzpicture}[scale=0.45,baseline=0]
	\draw (0,0) circle[radius=2cm];
	\draw (-.1,-.1) to (.1,.1);
	\draw (-.1,.1) to (.1,-.1);
		\draw[very thick] plot [smooth] coordinates{(-1.42,1.42)(-1.42,-1.42)};
		\draw[very thick] plot [smooth] coordinates{(1.42,-1.42)(1.42,1.42)};
		\draw[very thick,dashed] plot [smooth] coordinates{(1.42,-1.42)(.3,.3)(-1.42,1.42)};
		\draw[very thick,dashed] plot [smooth] coordinates{(1.42,-1.42)(-.3,-.3)(-1.42,1.42)};
		\node at (-1.42,1.42) [draw,fill,circle,scale=0.4,label=above:1]{};
		\node at (-1.42,-1.42) [draw,fill,circle,scale=0.4,label=south:2]{};
		\node at (1.42,-1.42) [draw,fill,circle,scale=0.4,label=south:3]{};
		\node at (1.42,1.42) [draw,fill,circle,scale=0.4,label=above:4]{};
	\node at (0,2.5){};
	\node at (0,-2.5){};
\end{tikzpicture}
& {\scriptsize $1\sqcup3\sqcup\begin{matrix}1\\2\end{matrix}[1]\sqcup\begin{matrix}3\\4\end{matrix}[1]$} & 2 &
\begin{tikzpicture}[scale=0.45,baseline=0]
	\draw (0,0) circle[radius=2cm];
	\draw (-.1,-.1) to (.1,.1);
	\draw (-.1,.1) to (.1,-.1);
		\draw[very thick,dashed] plot [smooth] coordinates{(-1.42,1.42)(-1.42,-1.42)};
		\draw[very thick,dashed] plot [smooth] coordinates{(1.42,-1.42)(1.42,1.42)};
		\draw[very thick] plot [smooth] coordinates{(1.42,1.42)(.3,-.3)(-1.42,-1.42)};
		\draw[very thick] plot [smooth] coordinates{(1.42,1.42)(-.3,.3)(-1.42,-1.42)};
		\node at (-1.42,1.42) [draw,fill,circle,scale=0.4,label=above:1]{};
		\node at (-1.42,-1.42) [draw,fill,circle,scale=0.4,label=south:2]{};
		\node at (1.42,-1.42) [draw,fill,circle,scale=0.4,label=south:3]{};
		\node at (1.42,1.42) [draw,fill,circle,scale=0.4,label=above:4]{};
	\node at (0,2.5){};
	\node at (0,-2.5){};
\end{tikzpicture}
& {\scriptsize $\begin{matrix}2\\3\end{matrix}\sqcup\begin{matrix}4\\1\end{matrix}\sqcup1[1]\sqcup3[1]$} & 2 \\
\hline

\begin{tikzpicture}[scale=0.45,baseline=0]
	\draw (0,0) circle[radius=2cm];
	\draw (-.1,-.1) to (.1,.1);
	\draw (-.1,.1) to (.1,-.1);
		\draw[very thick] plot [smooth] coordinates{(-1.42,1.42)(-1.42,-1.42)};
		\draw[very thick,dashed] plot [smooth] coordinates{(1.42,1.42)(-1.42,1.42)};
		\draw[very thick] plot [smooth] coordinates{(1.42,-1.42)(.3,.3)(-1.42,1.42)};
		\draw[very thick,dashed] plot [smooth] coordinates{(1.42,-1.42)(-.3,-.3)(-1.42,1.42)};
		\node at (-1.42,1.42) [draw,fill,circle,scale=0.4,label=above:1]{};
		\node at (-1.42,-1.42) [draw,fill,circle,scale=0.4,label=south:2]{};
		\node at (1.42,-1.42) [draw,fill,circle,scale=0.4,label=south:3]{};
		\node at (1.42,1.42) [draw,fill,circle,scale=0.4,label=above:4]{};s
	\node at (0,2.5){};
	\node at (0,-2.5){};
\end{tikzpicture}
& {\scriptsize $1\sqcup\begin{matrix}3\\4\end{matrix}\sqcup4[1]\sqcup\begin{matrix}1\\2\end{matrix}[1]$} & 4 & \multicolumn{3}{c|}{}\\
\hline
\end{tabular}
\caption{The maximal arc patterns (MAP) and 2-simple minded collections (2-smc) for $\Lambda(4,3,3,3,3)$, which is cluster tilted algebra of type $D_4$. Each MAP and 2-smc is shown up to cyclic permutation of the vertices. Recall that green arcs are drawn as solid and red arcs are drawn as dashed.}\label{table2}
\end{table}
\end{center}}

\section*{Acknowledgements}
A portion of this work is included in the first author's PhD thesis. The first author is thankful to Emily Barnard and Job Rock for meaningful conversations and suggestions. The second author is thankful to Aslak Buan and Bethany Marsh for explaining their results to him and sharing their preprints, and in particular for the insights provided in Aslak Buan's lecture at Tsinghua University. Both authors would like to thank Corey Bregman for meaningful conversations and Gordana Todorov for support and suggestions. They would also like to extend their gratitude to an anonymous referee for their thorough review and suggestions for improving this paper.

\bibliographystyle{amsplain}
\bibliography{HansonIgusa_tCMC_final}

\end{document}